\numberwithin{equation}{section}
\def\Xint#1{\mathchoice
   {\XXint\displaystyle\textstyle{#1}}%
   {\XXint\textstyle\scriptstyle{#1}}%
   {\XXint\scriptstyle\scriptscriptstyle{#1}}%
   {\XXint\scriptscriptstyle\scriptscriptstyle{#1}}%
   \!\int}
\def\XXint#1#2#3{{\setbox0=\hbox{$#1{#2#3}{\int}$}
     \vcenter{\hbox{$#2#3$}}\kern-.5\wd0}}
\def\aver#1{\Xint-_{#1}}
\newcommand{\meanint}{{\int{\mkern-19mu}-}}
\newtheorem{theorem}{Theorem}[section]
\newtheorem{lemma}[theorem]{Lemma}
\newtheorem{corollary}[theorem]{Corollary}
\newtheorem{proposition}[theorem]{Proposition}
\theoremstyle{remark}
\begin{document}
\allowdisplaybreaks

\title[Poisson semigroups for systems in the upper-half space]
{On the $L^p$-Poisson semigroup associated with elliptic systems}

\author{Jos\'e Mar{\'\i}a Martell}
\address{Jos\'e Mar{\'\i}a Martell
\\
Instituto de Ciencias Matem\'aticas CSIC-UAM-UC3M-UCM
\\
Consejo Superior de Investigaciones Cient{\'\i}ficas
\\
C/ Nicol\'as Cabrera, 13-15
\\
E-28049 Madrid, Spain} \email{chema.martell@icmat.es}

\author{Dorina Mitrea}
\address{Dorina Mitrea
\\
Department of Mathematics
\\
University of Missouri
\\
Columbia, MO 65211, USA} \email{mitread@missouri.edu}

\author{Irina Mitrea}
\address{Irina Mitrea
\\
Department of Mathematics
\\
Temple University\!
\\
1805\,N.\,Broad\,Street
\\
Philadelphia, PA 19122, USA} \email{imitrea@temple.edu}

\author{Marius Mitrea}
\address{Marius Mitrea
\\
Department of Mathematics
\\
University of Missouri
\\
Columbia, MO 65211, USA} \email{mitream@missouri.edu}

\thanks{The first author has been supported in part by MINECO Grant MTM2010-16518,
ICMAT Severo Ochoa project SEV-2011-0087. He also acknowledges that
the research leading to these results has received funding from the European Research
Council under the European Union's Seventh Framework Programme (FP7/2007-2013)/ ERC
agreement no. 615112 HAPDEGMT. The second author has been supported in part by a
Simons Foundation grant $\#\,$200750, the third author has
been supported in part by US NSF grant $\#\,$0547944, while the fourth author has been
supported in part by the Simons Foundation grant $\#\,$281566, and by a University of
Missouri Research Leave grant. This work has been possible thanks to the support and hospitality
of \textit{ICMAT, Consejo Superior de Investigaciones
Cient{\'\i}ficas} (Spain) and the \textit{University of Missouri} (USA).
The authors express their gratitude to these institutions.}

\date{\today}

\subjclass[2010]{Primary: 35J47, 47D06, 47D60. Secondary: 35C15, 35J57, 42B37.}

\keywords{Poisson semigroup, second order elliptic system, infinitesimal generator, graph Lipschitz domain,
higher order system, Lam\'e system, Poisson kernel, nontangential maximal function, Whitney arrays,
Sobolev space, Dirichlet problem, Regularity problem, Dirichlet-to-Normal map}

\begin{abstract}
We study the infinitesimal generator of the Poisson semigroup in $L^p$ associated
with homogeneous, second-order, strongly elliptic systems with constant complex
coefficients in the upper-half space, which is proved to be the Dirichlet-to-Normal
mapping in this setting. Also, its domain is identified
as the linear subspace of the $L^p$-based Sobolev space of order one
on the boundary of the upper-half space consisting of functions for which the
Regularity problem is solvable. Moreover, for a class of systems
containing the Lam\'e system, as well as all second-order, scalar elliptic operators,
with constant complex coefficients, the action of the infinitesimal generator is
explicitly described in terms of singular integral operators whose kernels involve
first-order derivatives of the canonical fundamental solution of the given system.
Furthermore, arbitrary powers of the infinitesimal generator of the said Poisson semigroup
are also described in terms of higher order Sobolev spaces and a higher order Regularity
problem for the system in question. Finally, we indicate how our techniques may adapted
to treat the case of higher order systems in graph Lipschitz domains.
\end{abstract}

\maketitle

\allowdisplaybreaks

\section{Introduction}
\setcounter{equation}{0}
\label{S-1}

Let $M\in{\mathbb{N}}$ and consider the second-order, homogeneous, $M\times M$ system,
with constant complex coefficients, written (with the usual convention of summation
over repeated indices in place) as
\begin{equation}\label{L-def}
Lu:=\Bigl(\partial_r(a^{\alpha\beta}_{rs}\partial_s u_\beta)
\Bigr)_{1\leq\alpha\leq M},
\end{equation}
when acting on a ${\mathscr{C}}^2$ vector-valued function $u=(u_\beta)_{1\leq\beta\leq M}$
defined in an open subset of ${\mathbb{R}}^n$. Assume that $L$ is {\tt strongly}
{\tt elliptic} in the sense that there exists $\kappa_o>0$ such that
\begin{equation}\label{L-ell.X}
\begin{array}{c}
{\rm Re}\,\bigl[a^{\alpha\beta}_{rs}\xi_r\xi_s\overline{\eta_\alpha}
\eta_\beta\,\bigr]\geq\kappa_o|\xi|^2|\eta|^2\,\,\mbox{ for every}
\\[8pt]
\xi=(\xi_r)_{1\leq r\leq n}\in{\mathbb{R}}^n\,\,\mbox{ and }\,\,
\eta=(\eta_\alpha)_{1\leq\alpha\leq M}\in{\mathbb{C}}^M.
\end{array}
\end{equation}

Every $L$ as in \eqref{L-def}-\eqref{L-ell.X} has
a Poisson kernel $P^L$, described in detail in Theorem~\ref{kkjbhV}.
This Poisson kernel has played a basic role in the treatment of the $L^p$-Dirichlet
boundary value problem for $L$ in the upper-half space from \cite{K-MMMM}.
To formally state this result, we shall need a little more notation.
Specifically, we agree to identify the boundary of the upper-half space
\begin{equation}\label{RRR-UpHs}
{\mathbb{R}}^{n}_{+}:=\big\{x=(x',x_n)\in
{\mathbb{R}}^{n}={\mathbb{R}}^{n-1}\times{\mathbb{R}}:\,x_n>0\big\}
\end{equation}
with the horizontal hyperplane ${\mathbb{R}}^{n-1}$ via $(x',0)\equiv x'$.
Given $\kappa>0$, at each point $x'\in\partial{\mathbb{R}}^{n}_{+}$ define
the conical nontangential approach region with vertex at $x'$ as
\begin{equation}\label{NT-1}
\Gamma_\kappa(x'):=\big\{y=(y',t)\in{\mathbb{R}}^{n}_{+}:\,|x'-y'|<\kappa\,t\big\}.
\end{equation}
Also, given a vector-valued function $u:{\mathbb{R}}^{n}_{+}\to{\mathbb{C}}^M$,
define the nontangential maximal function of $u$ by
\begin{equation}\label{NT-Fct}
\big({\mathcal{N}}u\big)(x'):=
\big({\mathcal{N}}_\kappa u\big)(x')
:=\sup\big\{|u(y)|:\,y\in\Gamma_\kappa(x')\big\},\qquad\forall\,x'\in{\mathbb{R}}^{n-1},
\end{equation}
and, whenever meaningful, define the nontangential pointwise trace of $u$ on
$\partial{\mathbb{R}}^{n}_{+}\equiv{\mathbb{R}}^{n-1}$ by
\begin{equation}\label{nkc-EE-2}
\Big(u\big|^{{}^{\rm n.t.}}_{\partial{\mathbb{R}}^{n}_{+}}\Big)(x')
:=\lim_{\Gamma_{\kappa}(x')\ni y\to (x',0)}u(y)
\quad\mbox{for }\,x'\in\partial{\mathbb{R}}^{n}_{+}\equiv{\mathbb{R}}^{n-1}.
\end{equation}

Here is the well-posedness result alluded to earlier. Below and elsewhere,
we define $P^L_t(x'):=t^{1-n}P^L(x'/t)$ for every $x'\in{\mathbb{R}}^{n-1}$ and $t>0$.

\begin{theorem}[\cite{K-MMMM}]\label{Theorem-NiceDP}
Assume that $L$ is a system as in \eqref{L-def}-\eqref{L-ell.X}, and fix some $p\in(1,\infty)$.
Then the $L^p$-Dirichlet boundary value problem for $L$ in $\mathbb{R}^{n}_{+}$,
\begin{equation}\label{Dir-BVP-Xalone}
(D_p)\,\,
\left\{
\begin{array}{l}
u\in{\mathscr{C}}^\infty(\mathbb{R}^{n}_{+}),
\\[4pt]
Lu=0\,\,\mbox{ in }\,\,\mathbb{R}^{n}_{+},
\\[4pt]
\mathcal{N}u\in L^p({\mathbb{R}}^{n-1}),
\\[6pt]
u\big|_{\partial\mathbb{R}^{n}_{+}}^{{}^{\rm n.t.}}=f\in L^p({\mathbb{R}}^{n-1}),
\end{array}
\right.
\end{equation}
is well-posed. In addition, the solution $u$ of \eqref{Dir-BVP-Xalone}
is given by
\begin{equation}\label{eq:UUPP}
u(x',t)=(P^L_t\ast f)(x')\quad\mbox{for all }\,\,(x',t)\in{\mathbb{R}}^n_{+},
\end{equation}
where $P^L$ is the Poisson kernel for $L$ in $\mathbb{R}^{n}_{+}$ and, in fact,
\begin{equation}\label{Dir-BVP-X:b}
\|\mathcal{N} u\|_{L^p({\mathbb{R}}^{n-1})}\approx\|f\|_{L^p({\mathbb{R}}^{n-1})},
\end{equation}
where the proportionality constants involved depend only on $n$, $p$, and $L$.
\end{theorem}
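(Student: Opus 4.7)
The plan is to establish existence by the explicit Poisson formula \eqref{eq:UUPP}, to handle uniqueness via a Fatou-type representation argument, and to read off the norm equivalence \eqref{Dir-BVP-X:b} from the existence estimate together with a trivial pointwise inequality. The hardest step will be uniqueness; existence and quantitative estimates reduce, as usual, to size and cancellation properties of $P^L$ supplied by Theorem~\ref{kkjbhV}.

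For \textbf{existence}, set $u(x',t):=(P^L_t\ast f)(x')$. Smoothness of $u$ in $\mathbb{R}^n_{+}$ and the identity $Lu=0$ follow by differentiation under the integral sign, using that $P^L$ has rapid decay together with the fact (built into Theorem~\ref{kkjbhV}) that $(x',t)\mapsto P^L_t(x'-y')$ is an $L$-null solution in $\mathbb{R}^n_{+}$ for each frozen $y'\in\mathbb{R}^{n-1}$. The nontangential estimate $\|\mathcal{N}u\|_{L^p(\mathbb{R}^{n-1})}\lesssim\|f\|_{L^p(\mathbb{R}^{n-1})}$ is the main ingredient: I would prove the pointwise bound $(\mathcal{N}u)(x')\leq C\,(Mf)(x')$, where $M$ is the Hardy--Littlewood maximal operator on $\mathbb{R}^{n-1}$, by a standard dyadic-annulus decomposition exploiting the size bound $|P^L(z')|\lesssim (1+|z'|)^{-n}$ and the fact that $y\in\Gamma_\kappa(x')$ makes $P^L_t(\cdot)$ essentially comparable to a normalized mean on balls around $x'$. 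The $L^p$-boundedness of $M$ for $p\in(1,\infty)$ then yields the claim. To verify the nontangential trace identity $u\big|^{{}^{\rm n.t.}}_{\partial\mathbb{R}^n_{+}}=f$ almost everywhere, I would first check it for $f\in {\mathscr{C}}^0_c(\mathbb{R}^{n-1})$, where it reduces to the classical approximation-to-the-identity argument after using the key normalization $\int_{\mathbb{R}^{n-1}}P^L(x')\,dx'=I_{M\times M}$ (also provided by Theorem~\ref{kkjbhV}); then pass to general $f\in L^p$ by density, controlling the error via the maximal-function domination established above.

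For \textbf{uniqueness} --- which I expect to be the principal obstacle --- let $u$ be any solution with vanishing boundary data. I would first establish a \emph{Fatou-type reproducing formula}: if $\varepsilon>0$ and $u_\varepsilon(x',t):=u(x',t+\varepsilon)$, then $u_\varepsilon(\cdot,t)=P^L_t\ast u(\cdot,\varepsilon)$ throughout $\mathbb{R}^n_{+}$. This identity can be derived by observing that both sides are $L$-harmonic in $\mathbb{R}^n_{+}$, agree nontangentially on $\partial\mathbb{R}^n_{+}$, and have nontangential maximal functions that are bounded uniformly in $t$ (by $\mathcal{N}u\in L^p$ and Step~1); uniqueness in a \emph{bounded} Dirichlet setting --- accessible via Green's representation combined with the decay of $P^L$ --- then forces the equality. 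With this representation in hand, the vanishing hypothesis $u\big|^{{}^{\rm n.t.}}_{\partial\mathbb{R}^n_{+}}=0$ together with the $L^p$-control of $\mathcal{N}u$ allows one to pass to the limit $\varepsilon\downarrow 0$ inside $P^L_t\ast u(\cdot,\varepsilon)$ by dominated convergence (the dominating function is $C\,M(\mathcal{N}u)\in L^p$), yielding $u(\cdot,t)\equiv 0$ for every $t>0$.

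Finally, for the \textbf{norm equivalence}, the estimate $\|\mathcal{N}u\|_{L^p}\lesssim\|f\|_{L^p}$ is exactly the bound produced in the existence step. The reverse bound $\|f\|_{L^p}\leq\|\mathcal{N}u\|_{L^p}$ is immediate: at every Lebesgue point $x'\in\mathbb{R}^{n-1}$ of $f$ one has $f(x')=\lim_{\Gamma_\kappa(x')\ni y\to(x',0)}u(y)$, so $|f(x')|\leq(\mathcal{N}u)(x')$ pointwise almost everywhere. Combining these two inequalities yields \eqref{Dir-BVP-X:b} with the stated dependence of constants on $n$, $p$, and $L$.
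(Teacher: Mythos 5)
Your existence half is sound: constructing $u=P^L_t\ast f$, dominating the nontangential maximal function by the Hardy--Littlewood maximal operator via the decay $|P^L_t(z')|\lesssim t/(t^2+|z'|^2)^{n/2}$, recovering the trace through the normalization $\int_{\mathbb{R}^{n-1}}P^L\,dx'=I_{M\times M}$ and an approximate-identity/density argument, and then getting the lower bound in \eqref{Dir-BVP-X:b} from $|f|\leq\mathcal{N}u$ a.e.\ --- this is exactly the content the present paper imports as parts {\it (6)}--{\it (7)} of Theorem~\ref{kkjbhV} (the theorem itself is quoted from \cite{K-MMMM}, so there is no in-paper proof to compare against; you are in effect re-deriving that imported machinery, correctly).

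The genuine gap is uniqueness. The reproducing identity $u(\cdot,t+\varepsilon)=P^L_t\ast u(\cdot,\varepsilon)$ that you take as the key step is precisely the Fatou-type theorem (Theorem~\ref{tuFatou}, also from \cite{K-MMMM}), and it is the crux of the whole matter; your justification of it is circular. You argue that both sides are $L$-null solutions in $\mathbb{R}^n_{+}$ with the same nontangential boundary values and $L^p$-controlled nontangential maximal functions, hence equal --- but that assertion \emph{is} uniqueness for a Dirichlet problem of type \eqref{Dir-BVP-Xalone} in the unbounded half-space, i.e.\ the statement you are trying to prove. The appeal to ``uniqueness in a bounded Dirichlet setting via Green's representation'' does not close this: the difference $w:=u(\cdot,\cdot+\varepsilon)-P^L_{(\cdot)}\ast u(\cdot,\varepsilon)$ lives on all of $\mathbb{R}^n_{+}$ (bounded there by interior estimates, continuous up to and vanishing on $\partial\mathbb{R}^n_{+}$), so to conclude $w\equiv 0$ you need a Liouville/Phragm\'en--Lindel\"of rigidity theorem for bounded $L$-null solutions in a half-space vanishing on the boundary. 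For the Laplacian this follows from reflection plus Liouville, but for general strongly elliptic systems with complex coefficients there is no maximum principle and no reflection principle, and exactly this rigidity is the nontrivial content of the uniqueness proof in \cite{K-MMMM} (carried out there through quantitative Green-type representations over large regions, with the error terms controlled by $\mathcal{N}u\in L^p(\mathbb{R}^{n-1})$ and interior estimates). Until you prove either that half-space Liouville statement or the reproducing formula by an independent argument, the uniqueness step --- and with it well-posedness --- remains unproven; the subsequent passage to the limit $\varepsilon\downarrow 0$ by dominated convergence is fine once the formula is available.
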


As a consequence of Theorem~\ref{Theorem-NiceDP} it has been shown in \cite{K-MMMM}
that, for $p\in(1,\infty)$, the Poisson kernel $P^L$ induces a natural $C_0$-semigroup
on $L^p({\mathbb{R}}^{n-1})$ in the following precise sense
(for pertinent definitions see \S\ref{S-2}).

\begin{theorem}[\cite{K-MMMM}]\label{VCXga}
Let $L$ be a system with complex coefficients as in \eqref{L-def}-\eqref{L-ell.X}
and denote by $P^L$ the Poisson kernel associated with $L$ as in Theorem~\ref{kkjbhV}.
Also, fix $p\in(1,\infty)$. Then the family $T=\{T(t)\}_{t\geq 0}$ where
$T(0):=I$, the identity operator on $L^p({\mathbb{R}}^{n-1})$ and, for each $t>0$,
\begin{equation}\label{eq:Taghb8}
\begin{array}{c}
T(t):L^p({\mathbb{R}}^{n-1})\longrightarrow L^p({\mathbb{R}}^{n-1}),
\\[6pt]
\big(T(t)f\big)(x'):=(P^L_t\ast f)(x')
\,\,\mbox{ for each }\,f\in L^p({\mathbb{R}}^{n-1}),\,\,x'\in{\mathbb{R}}^{n-1},
\end{array}
\end{equation}
is a $C_0$-semigroup on $L^p({\mathbb{R}}^{n-1})$, which satisfies
\begin{equation}\label{eq:Taghb8.77}
\sup_{t\geq 0}\big\|T(t)\big\|_{{\mathcal{L}}(L^p({\mathbb{R}}^{n-1}))}<\infty.
\end{equation}
\end{theorem}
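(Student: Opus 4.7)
The plan is to verify in turn each of the defining properties of a $C_0$-semigroup, together with the uniform boundedness \eqref{eq:Taghb8.77}, using as the main input the well-posedness and the nontangential maximal function estimate of Theorem~\ref{Theorem-NiceDP}. Since $T(0)=I$ holds by definition, three items remain to be checked: the bound $T(t):L^p(\mathbb{R}^{n-1})\to L^p(\mathbb{R}^{n-1})$ uniformly in $t\geq 0$, the semigroup identity $T(t+s)=T(t)T(s)$ for $s,t\geq 0$, and the strong continuity $\lim_{t\to 0^+}\|T(t)f-f\|_{L^p(\mathbb{R}^{n-1})}=0$ for every $f\in L^p(\mathbb{R}^{n-1})$; continuity at positive times follows from these by a standard argument using the uniform bound.

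For the uniform bound, fix $f\in L^p(\mathbb{R}^{n-1})$ and let $u(y',r):=(P^L_r\ast f)(y')$ for $(y',r)\in\mathbb{R}^n_+$. By Theorem~\ref{Theorem-NiceDP} this $u$ solves \eqref{Dir-BVP-Xalone} with datum $f$, and satisfies $\|\mathcal{N}u\|_{L^p(\mathbb{R}^{n-1})}\leq C\|f\|_{L^p(\mathbb{R}^{n-1})}$. Since $(x',t)\in\Gamma_\kappa(x')$ for every $t>0$, the pointwise bound $|(T(t)f)(x')|=|u(x',t)|\leq(\mathcal{N}u)(x')$ holds, and taking $L^p$ norms gives $\|T(t)f\|_{L^p}\leq C\|f\|_{L^p}$ with $C$ independent of $t$, yielding both the boundedness of each $T(t)$ and the estimate \eqref{eq:Taghb8.77}.

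The semigroup identity is the step I expect to be the main obstacle, because it requires invoking uniqueness in \eqref{Dir-BVP-Xalone} for a translated solution. Fix $s>0$, set $g:=T(s)f=P^L_s\ast f\in L^p(\mathbb{R}^{n-1})$, and define
\begin{equation*}
v(y',r):=u(y',r+s)=(P^L_{r+s}\ast f)(y'),\qquad \tilde u(y',r):=(P^L_r\ast g)(y'),
\end{equation*}
on $\mathbb{R}^n_+$. Each of $v$ and $\tilde u$ is smooth, annihilated by $L$, and has nontangential maximal function in $L^p(\mathbb{R}^{n-1})$: for $v$ this follows from an obvious inclusion of cones giving $\mathcal{N}v\leq\mathcal{N}u$; for $\tilde u$ it follows from Theorem~\ref{Theorem-NiceDP} with datum $g$. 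Both functions also have nontangential boundary trace equal to $g$: for $v$ by continuity of $u$ across the horizontal plane $\{y_n=s\}\subset\mathbb{R}^n_+$, and for $\tilde u$ by Theorem~\ref{Theorem-NiceDP} once more. Uniqueness in \eqref{Dir-BVP-Xalone} then forces $v=\tilde u$ pointwise on $\mathbb{R}^n_+$, and evaluating at $r=t$ yields $T(t+s)f=T(t)T(s)f$.

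For strong continuity, the nontangential trace statement in Theorem~\ref{Theorem-NiceDP} gives $u(y',r)\to f(x')$ as $(y',r)\to(x',0)$ within $\Gamma_\kappa(x')$, for almost every $x'\in\mathbb{R}^{n-1}$. Specializing to $(y',r)=(x',t)$ yields $(T(t)f)(x')\to f(x')$ pointwise a.e.\ as $t\to 0^+$, while the pointwise domination $|(T(t)f)(x')|\leq(\mathcal{N}u)(x')\in L^p(\mathbb{R}^{n-1})$ allows Lebesgue's dominated convergence theorem to conclude $\|T(t)f-f\|_{L^p}\to 0$ as $t\to 0^+$. This completes the verification of the $C_0$-semigroup axioms.
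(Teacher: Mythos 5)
Your argument is correct. Note, however, that this theorem is quoted from \cite{K-MMMM} and the paper at hand contains no proof of it; what it does record are the ingredients one would use, namely the well-posedness package of Theorem~\ref{Theorem-NiceDP} and the properties of $P^L$ in Theorem~\ref{kkjbhV}, in particular the maximal bound \eqref{exTGFVC} and the kernel semigroup identity \eqref{eq:re4fd}. With \eqref{eq:re4fd} in hand, the semigroup law is a one-line computation, $T(t)T(s)f=P^L_t\ast(P^L_s\ast f)=(P^L_t\ast P^L_s)\ast f=P^L_{t+s}\ast f$ (Fubini being justified by \eqref{eq:IG6gy}), whereas you re-derive it from uniqueness in $(D_p)$ applied to the vertical translate $u(\cdot,\cdot+s)$ versus $P^L_\cdot\ast g$ with $g=T(s)f$; this is a legitimate and essentially equivalent route, since uniqueness in \eqref{Dir-BVP-Xalone} is precisely the mechanism behind \eqref{eq:re4fd}, and your verification of the hypotheses for the translate (cone inclusion giving ${\mathcal{N}}v\leq{\mathcal{N}}u$, interior continuity giving the trace $g$, and $g\in L^p$ from the uniform bound) is complete. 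Your remaining steps match what the cited ingredients give directly: the uniform bound \eqref{eq:Taghb8.77} from \eqref{Dir-BVP-X:b} (or, alternatively, from \eqref{exTGFVC} plus the $L^p$-boundedness of ${\mathcal{M}}$), strong continuity at $t=0$ from the a.e.\ nontangential convergence \eqref{exTGFVC.2s} dominated by ${\mathcal{N}}u\in L^p$, and continuity at positive times by the standard estimate $\|T(t_0\pm h)f-T(t_0)f\|_{L^p}\leq C\|T(h)f-f\|_{L^p}$ using the uniform operator bound.
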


In this paper we are interested in continuing the work initiated in \cite{K-MMMM}
by addressing the question of identifying the infinitesimal generator of the
semigroup described in Theorem~\ref{VCXga}. Later on, in Theorem~\ref{V-Naa.11.HOR}
we shall actually succeed in identifying arbitrary powers of this generator, whose
nature turns out to be intimately connected with the solvability properties of a
version of the Dirichlet problem \eqref{Dir-BVP-Xalone} for more regular boundary data.
We first describe this problem below, in \eqref{Dir-BVvku}, at the most basic level
(compare with \eqref{Dir-BVP-p:l}, dealing with the general case).

For each $p\in(1,\infty)$ denote by $L^p_1(\mathbb{R}^{n-1})$
the classical Sobolev space of order one in $\mathbb{R}^{n-1}$, consisting of
functions from $L^p(\mathbb{R}^{n-1})$ whose distributional first-order derivatives
are in $L^p(\mathbb{R}^{n-1})$. Then the version of $(D_p)$, with data from the Sobolev space
$L^p_1({\mathbb{R}}^{n-1})$, reads
\begin{equation}\label{Dir-BVvku}
(R_p)\,\,
\left\{
\begin{array}{l}
u\in{\mathscr{C}}^\infty({\mathbb{R}}^n_{+}),
\\[4pt]
Lu=0\,\,\mbox{ in }\,\,\mathbb{R}^{n}_{+},
\\[4pt]
{\mathcal{N}}u,\,{\mathcal{N}}(\nabla u)\in L^p(\mathbb{R}^{n-1}),
\\[6pt]
u\bigl|_{\partial\mathbb{R}^{n}_{+}}^{{}^{\rm n.t.}}=f\in L^p_1(\mathbb{R}^{n-1}).
\end{array}
\right.
\end{equation}
We will refer to \eqref{Dir-BVvku} as the Regularity problem for $L$ in
${\mathbb{R}}^n_{+}$, and abbreviate it as $(R_p)$. Of course, the well-posedness
of $(D_p)$ from Theorem~\ref{Theorem-NiceDP} implies uniqueness for $(R_p)$,
though the solvability of the latter boundary value problem is not known to hold
in the general class of systems with complex coefficients as in \eqref{L-def}-\eqref{L-ell.X}.

One of our main results in this paper then reads as follows.

\begin{theorem}\label{V-Naa.11}
Let $L$ be a strongly elliptic, second-order, homogeneous, $M\times M$ system,
with constant complex coefficients. Fix some $p\in(1,\infty)$ and consider the
$C_0$-semigroup $T=\{T(t)\}_{t\geq 0}$ on $L^p({\mathbb{R}}^{n-1})$ associated
with $L$ as in Theorem~\ref{VCXga}. Denote by ${\mathbf{A}}$ the infinitesimal
generator of $T$, with domain $D({\mathbf{A}})$. Then
\begin{equation}\label{eq:tfc.1-new.RRR}
\mbox{$D({\mathbf{A}})$ is a dense linear subspace of $L^p_1({\mathbb{R}}^{n-1})$}
\end{equation}
and, in fact,
\begin{equation}\label{eq:tfc.1-new}
D({\mathbf{A}})=\big\{f\in L^p_1({\mathbb{R}}^{n-1}):\,
\mbox{$(R_p)$ with boundary datum $f$ is solvable}\big\}.
\end{equation}
In particular,
\begin{equation}\label{eq:tfc.2BB}
D({\mathbf{A}})=L^p_1({\mathbb{R}}^{n-1})
\,\Longleftrightarrow\,\,\mbox{$(R_p)$ is solvable
for arbitrary data in $L^p_1({\mathbb{R}}^{n-1})$}.
\end{equation}
Moreover, given any $f\in D({\mathbf{A}})$, if $u$ solves $(R_p)$ with boundary
datum $f$ then
\begin{equation}\label{eq:hJB}
(\partial_n u)\big|^{{}^{\rm n.t.}}_{\partial{\mathbb{R}}^n_{+}}\,\,
\mbox{ exists a.e.~in ${\mathbb{R}}^{n-1}$},
\end{equation}
and ${\mathbf{A}}$ acts on $f$ like the Dirichlet-to-Normal operator, i.e.,
\begin{equation}\label{mJBVV}
{\mathbf{A}}f=(\partial_n u)\big|^{{}^{\rm n.t.}}_{\partial{\mathbb{R}}^n_{+}}.
\end{equation}

Finally, let $E=\big(E_{\gamma\beta}\big)_{1\leq\gamma,\beta\leq M}$
be the canonical fundamental solution for $L$ {\rm (}from Theorem~\ref{FS-prop}{\rm )}, and
make the additional assumption that $L=\partial_r A_{rs}\partial_s$ for some family
of complex matrices
\begin{equation}\label{eq:Asd.444}
A_{rs}=\bigl(a_{rs}^{\,\alpha\beta}\bigr)_{1\leq\alpha,\beta\leq M},\qquad
1\leq r,s\leq n,
\end{equation}
satisfying
\begin{equation}\label{eq:Asd.FFF}
\begin{array}{l}
a^{\beta\alpha}_{rn}\big(\partial_r E_{\gamma\beta}\big)(x',0)=0
\,\,\mbox{ for each }\,\,x'\in{\mathbb{R}}^{n-1}\setminus\{0\},
\\[6pt]
\mbox{for every fixed multi-indices }\,\,\alpha,\gamma\in\{1,\dots,M\}.
\end{array}
\end{equation}
Then $D({\mathbf{A}})=L^p_1({\mathbb{R}}^{n-1})$ and,
for each $\gamma\in\{1,\dots,M\}$ and $f=(f_\alpha)_{1\leq\alpha\leq M}
\in L^p_1({\mathbb{R}}^{n-1})$,
\begin{align}\label{mdiab}
({\mathbf{A}}f)_\gamma(x')
&=
-\sum\limits_{s=1}^{n-1}
\Big(\big[\big(a^{\sigma\tau}_{nn}\big)_{1\leq\sigma,\tau\leq M}\big]^{-1}
\Big)_{\gamma\beta}\,a^{\beta\alpha}_{ns}\,(\partial_sf_{\alpha})(x')
\nonumber\\[4pt]
&\qquad
-2\sum\limits_{s=1}^{n-1}a^{\beta\alpha}_{rs}
\lim_{\varepsilon\to0^{+}}
\int\limits_{\substack{y'\in{\mathbb{R}}^{n-1}\\ |x'-y'|>\varepsilon}}
(\partial_r E_{\gamma\beta})(x'-y',0)(\partial_sf_\alpha)(y')\,dy',
\end{align}
for a.e. $x'\in{\mathbb{R}}^{n-1}$.
\end{theorem}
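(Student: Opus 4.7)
Set $u(x',t):=(P^L_t\ast f)(x')$; by Theorem~\ref{Theorem-NiceDP} this is the unique solution of $(D_p)$ with datum $f\in L^p(\mathbb{R}^{n-1})$. Because $L$ has constant coefficients, $\partial_t u=\partial_n u$. The proof splits into two blocks. The first, which yields \eqref{eq:tfc.1-new.RRR}, \eqref{eq:tfc.1-new}, \eqref{eq:tfc.2BB}, \eqref{eq:hJB} and \eqref{mJBVV}, combines $C_0$-semigroup theory with the approximate-identity properties of $\{P^L_t\}_{t>0}$ inherited from Theorem~\ref{Theorem-NiceDP}. The second, which delivers \eqref{mdiab} under hypothesis \eqref{eq:Asd.FFF}, is a direct computation via Green's formula for the canonical fundamental solution $E$, followed by jump relations.

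\textbf{Characterizing $D(\mathbf{A})$.} For the inclusion $\supseteq$ in \eqref{eq:tfc.1-new}, suppose $f\in L^p_1$ and that $(R_p)$ admits a solution $w$. Uniqueness in $(D_p)$ forces $w=u$, so $\mathcal{N}(\nabla u)\in L^p$. In particular $v:=\partial_n u$ satisfies $Lv=0$ with $\mathcal{N}v\in L^p$, and a standard Fatou-type argument (a by-product of Theorem~\ref{Theorem-NiceDP}) furnishes a nontangential trace $g:=v|_\partial^{\,\rm n.t.}\in L^p$ with $v(\cdot,s)=P^L_s\ast g$. The semigroup difference quotient now reads
\begin{equation*}
\frac{T(t)f-f}{t}(x')=\frac{1}{t}\int_0^t\partial_n u(x',s)\,ds=\frac{1}{t}\int_0^t(P^L_s\ast g)(x')\,ds,
\end{equation*}
which converges to $g$ in $L^p$ by strong continuity at $s=0$ of the $C_0$-semigroup from Theorem~\ref{VCXga}. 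Hence $f\in D(\mathbf{A})$ with $\mathbf{A}f=g=(\partial_n u)|_\partial^{\,\rm n.t.}$, giving both \eqref{eq:hJB} and \eqref{mJBVV}. Conversely, if $f\in D(\mathbf{A})$, the abstract identity $\tfrac{d}{dt}T(t)f=T(t)\mathbf{A}f$ together with $\partial_t u=\partial_n u$ yields $\partial_n u(\cdot,t)=P^L_t\ast\mathbf{A}f$, so $\partial_n u$ itself solves $(D_p)$ with datum $\mathbf{A}f\in L^p$, and Theorem~\ref{Theorem-NiceDP} gives $\mathcal{N}(\partial_n u)\in L^p$ and $(\partial_n u)|_\partial^{\,\rm n.t.}=\mathbf{A}f$.

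\textbf{The main obstacle: tangential derivatives.} The delicate point is to upgrade the converse direction, deducing $f\in L^p_1$ (and the full $(R_p)$-solvability of $u$) from the normal-derivative control just obtained. My approach is to integrate the semigroup identity to obtain
\begin{equation*}
f=u(\cdot,1)-\int_0^1 (P^L_s\ast\mathbf{A}f)\,ds,
\end{equation*}
and apply a tangential derivative $\partial_j$ ($j<n$) to both sides. The first term contributes $(\partial_j P^L_1)\ast f\in L^p$ because $\partial_j P^L_1\in L^1(\mathbb{R}^{n-1})$, a consequence of the decay of $\nabla P^L$ in Theorem~\ref{kkjbhV}. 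For the second, the convolution kernel $K_j(x'):=\int_0^1(\partial_j P^L_s)(x')\,ds$ is a standard Calder\'on--Zygmund kernel on $\mathbb{R}^{n-1}$: the size and smoothness estimates follow from the homogeneity and smoothness of $P^L$ recorded in Theorem~\ref{kkjbhV}, while $L^2$-boundedness of convolution with $K_j$ comes from Plancherel applied to the Fourier multiplier underlying the construction of $P^L$; classical Calder\'on--Zygmund theory then yields $L^p$-boundedness for every $p\in(1,\infty)$. Hence $\partial_j f\in L^p$, proving $f\in L^p_1$. Once $f\in L^p_1$, each $\partial_j u(\cdot,t)=P^L_t\ast\partial_j f$ solves $(D_p)$ with datum $\partial_j f$, whence $\mathcal{N}(\partial_j u)\in L^p$ with trace $\partial_j f$; combined with the control on $\partial_n u$ already proved, this yields $\mathcal{N}(\nabla u)\in L^p$ and shows $u$ solves $(R_p)$. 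Density of $D(\mathbf{A})$ in $L^p_1$ (statement \eqref{eq:tfc.1-new.RRR}) then follows by approximating $f\in L^p_1$ with $T(\varepsilon)f$ as $\varepsilon\to 0^+$: each $T(\varepsilon)f$ lies in $D(\mathbf{A})$ because the shifted solution $(x',t)\mapsto u(x',t+\varepsilon)$ is smooth up to the boundary and hence solves $(R_p)$, while $T(\varepsilon)f\to f$ and $T(\varepsilon)\partial_j f\to\partial_j f$ in $L^p$ by strong continuity; \eqref{eq:tfc.2BB} is then a tautological restatement of \eqref{eq:tfc.1-new}.

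\textbf{Explicit formula under \eqref{eq:Asd.FFF}.} Apply Green's formula for $L=\partial_r A_{rs}\partial_s$ to $u$ and $E_{\gamma\cdot}(x-\cdot)$ on $\mathbb{R}^n_+$. The solid integral vanishes because $Lu=0$, leaving two boundary contributions on $\{y_n=0\}$: one with kernel $a^{\beta\alpha}_{rn}(\partial_r E_{\gamma\beta})(\cdot,0)$ coupled to $f$, and one with kernel $a^{\beta\alpha}_{rs}(\partial_r E_{\gamma\beta})(\cdot,0)$ ($s<n$) coupled to tangential derivatives $\partial_s f_\alpha$. Hypothesis \eqref{eq:Asd.FFF} annihilates the first type. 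Differentiating the resulting representation of $u$ in $x_n$ and taking the nontangential boundary trace, the jump relations for singular integrals with Calder\'on--Zygmund kernels produce the principal-value integral in \eqref{mdiab}, while the algebraic first summand arises from inverting the strongly elliptic matrix $(a^{\sigma\tau}_{nn})_{1\leq\sigma,\tau\leq M}$ in the pointwise trace of $Lu=0$ to solve for $\partial_n u$ from the remaining boundary data. Since the right-hand side of \eqref{mdiab} defines a bounded operator $L^p_1\to L^p$, the characterization \eqref{eq:tfc.1-new} together with \eqref{eq:tfc.2BB} forces $D(\mathbf{A})=L^p_1(\mathbb{R}^{n-1})$, closing the argument.
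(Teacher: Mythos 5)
Your treatment of the semigroup-theoretic part is correct in outline but follows a genuinely different route from the paper at the one hard step, namely showing that $f\in D({\mathbf{A}})$ forces $f\in L^p_1({\mathbb{R}}^{n-1})$. The paper first proves ${\mathcal{N}}_\kappa(\nabla u)\in L^p$ by a duality argument pairing area functions, resting on the square-function estimate of Theorem~\ref{prop:SFE} and a Riesz-representation step, and only then identifies $\partial_j f$ with $(\partial_j u)\big|^{{}^{\rm n.t.}}_{\partial{\mathbb{R}}^n_{+}}$ via the divergence theorem (Theorem~\ref{theor:div-thm}); you instead differentiate the integrated identity $f=T(1)f-\int_0^1 T(s){\mathbf{A}}f\,ds$ and treat $K_j:=\int_0^1\partial_jP^L_s\,ds$ as a Calder\'on--Zygmund kernel on ${\mathbb{R}}^{n-1}$. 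If your boundedness claim holds, this bypasses the square-function machinery entirely and makes the rest ($\mathcal{N}(\partial_j u)\leq C\mathcal{M}(\partial_j f)$, $\mathcal{N}(\partial_n u)\leq C\mathcal{M}({\mathbf{A}}f)$) immediate, which is a real simplification. But the pivotal $L^2$-bound is only asserted: there is no ``Fourier multiplier underlying the construction of $P^L$'' available in this generality. It can be repaired from Theorem~\ref{kkjbhV}\,(5): $\partial^{\alpha'}P^L\in L^1({\mathbb{R}}^{n-1})$ for $|\alpha'|\leq 2$ gives $|\widehat{P^L}(\xi')|\leq C(1+|\xi'|)^{-2}$, whence $\widehat{K_j}(\xi')=i\xi_j\int_0^1\widehat{P^L}(s\xi')\,ds$ is bounded; note that the first-order decay $|\widehat{P^L}(\xi')|\lesssim|\xi'|^{-1}$ alone yields only a logarithmically divergent bound, so the second-order decay is genuinely needed. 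You also need a truncation $\int_\delta^1$ to justify interchanging $\partial_j$ with the $s$-integral, since $\|\partial_jP^L_s\ast g\|_{L^p}\sim s^{-1}$, and in the density step ``smooth up to the boundary'' does not by itself give ${\mathcal{N}}_\kappa(\nabla u_\varepsilon)\in L^p$: one needs the interior-estimate bound as in \eqref{elsiWth.UU3b}. The right-to-left inclusion and the identities \eqref{eq:hJB}, \eqref{mJBVV} are essentially as in the paper (the Fatou result you invoke is Theorem~\ref{tuFatou}).

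The final block, under \eqref{eq:Asd.FFF}, contains genuine gaps. First, the Green's-formula bookkeeping is wrong: the representation of $u(x)$, $x\in{\mathbb{R}}^n_{+}$, contains the single layer of the \emph{unknown} conormal derivative $\partial^A_\nu u$ (which includes precisely the quantity $\partial_n u\big|_{\partial{\mathbb{R}}^n_{+}}$ you are trying to compute), and the double-layer term coupled to $f$ has kernel $a^{\beta\alpha}_{rn}(\partial_rE_{\gamma\beta})(x'-y',t)$ with $t>0$, so hypothesis \eqref{eq:Asd.FFF} does \emph{not} annihilate it; what \eqref{eq:Asd.FFF} actually buys is that the boundary restriction of the double-layer kernel vanishes, which is how one proves that $K^L_{\gamma\alpha}(x',t)=2a^{\beta\alpha}_{rn}(\partial_rE_{\gamma\beta})(x',t)$ (Proposition~\ref{uniq:double->!Poisson}), a step your sketch omits. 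Second, the kernel coupled to $\partial_sf_\alpha$ does not come out of Green's formula: it appears only after using that $E_{\gamma\cdot}$ is a null-solution of $L^\top$ away from the origin to trade $a^{\beta\alpha}_{rn}\partial_r\partial_nE_{\gamma\beta}$ for $-\sum_{s<n}a^{\beta\alpha}_{rs}\partial_r\partial_sE_{\gamma\beta}$ and then integrating by parts in $y'$, as in the derivation of \eqref{bsyus56}. Third, the algebraic first summand in \eqref{mdiab} is the jump coefficient $\tfrac{1}{2i}\widehat{K^s_{\gamma\alpha}}(-{\mathbf{e}}_n)$ from \eqref{main-jump}, computed via $\widehat{E}(\xi)=-[L(\xi)]^{-1}$ as in \eqref{E-ftXC}; it is not obtained by ``inverting $(a^{\sigma\tau}_{nn})$ in the pointwise trace of $Lu=0$''. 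Finally, your closing inference is a non sequitur: $L^p_1\to L^p$ boundedness of the operator on the right of \eqref{mdiab} does not give $D({\mathbf{A}})=L^p_1({\mathbb{R}}^{n-1})$; by \eqref{eq:tfc.1-new} one must solve $(R_p)$ for every $f\in L^p_1$, i.e., prove the nontangential-maximal estimate ${\mathcal{N}}_\kappa(\partial_nu)\in L^p$, which requires the interior bound \eqref{T-Har} of Theorem~\ref{Main-T2} applied to the representation of $\partial_n u$, not merely the boundedness of its boundary trace operator.
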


The name `Dirichlet-to-Normal map' used for the action of ${\mathbf{A}}$
is justified by \eqref{eq:tfc.1-new} and \eqref{mJBVV}, since for each
$f\in D({\mathbf{A}})$ there exists $u$ solution of $(R_p)$, so in particular $u$
is a solution of the Dirichlet problem $(D_p)$ with boundary datum $f$
in ${\mathbb{R}}^n_{+}$, and ${\mathbf{A}}f$ equals the (inner)
normal derivative on the boundary of ${\mathbb{R}}^n_{+}$ of this solution.

Regarding the nature of \eqref{eq:Asd.FFF}, the starting point is the observation that
there are multiple coefficient tensors inducing the same given system $L$. While
neither the strong ellipticity condition for $L$ nor the fundamental solution
$E=\big(E_{\gamma\beta}\big)_{1\leq\gamma,\beta\leq M}$ for the system
$L$, explicitly described in Theorem~\ref{FS-prop}, depend on
the choice of the coefficient tensor used to represent $L$, other entities
related to $L$ are affected by such a choice. For example, this is the case
for the conormal derivative, on which we wish to elaborate. Concretely, each
writing of $L$ as in \eqref{L-def}, corresponding to a coefficient tensor
\begin{equation}\label{eq:Asd}
A=\bigl(a_{rs}^{\,\alpha\beta}\bigr)
_{\substack{1\leq r,s\leq n\\[1pt] 1\leq\alpha,\beta\leq M}}
\in{\mathbb{C}}^{\,nM}\times{\mathbb{C}}^{\,nM},
\end{equation}
induces a conormal derivative associated with a domain $\Omega\subset{\mathbb{R}}^n$
with outward unit normal $\nu=(\nu_1,\dots,\nu_n)$ acting on a function
$u=(u_\beta)_{1\leq\beta\leq M}$ according to the formula
\begin{equation}\label{eq:Asd.5f56}
\partial^{A}_\nu u:=\big(a_{rs}^{\,\alpha\beta}\nu_r\partial_s
u_\beta\big)_{\substack{1\leq\alpha\leq M}}\,\,\mbox{ on }\,\,\partial\Omega,
\end{equation}
In this piece of terminology, \eqref{eq:Asd.FFF} expresses the fact that
\begin{equation}\label{eq:grr6}
\parbox{9.90cm}
{the conormal derivative of $L^\top$, the transposed of $L$, annihilates
$E_{\gamma\cdot}=(E_{\gamma\beta})_{1\leq\beta\leq M}$ on $\partial{\mathbb{R}}^n_{+}$,
for each $\gamma\in\{1,\dots,M\}$.}
\end{equation}
Sufficient conditions guaranteeing the existence of a coefficient tensor for $L$
satisfying \eqref{eq:grr6}, formulated exclusively in terms of (the inverse of)
the symbol of the system $L$, are given in Proposition~\ref{UaalpIKL-PP}.
Following \cite{MaMiMiMi}, \cite{EE-MMMM}, here we wish to note that examples
of systems $L$ having this property include all elliptic scalar operators
$L={\rm div}{A}\,\nabla$ with
${A}=(a_{rs})_{1\leq r,s\leq n}\in{\mathbb{C}}^{n\times n}$,
as well as the Lam\'e system
\begin{eqnarray}\label{TYd-YG-76g}
Lu:=\mu\Delta u+(\lambda+\mu)\nabla{\rm div}\,u,\qquad u=(u_1,\dots,u_n)\in{\mathscr{C}}^2,
\end{eqnarray}
where the constants $\lambda,\mu\in{\mathbb{R}}$ (typically called Lam\'e moduli)
are assumed to satisfy
\begin{eqnarray}\label{Yfhv-8yg}
\mu>0\quad\mbox{ and }\quad 2\mu+\lambda>0.
\end{eqnarray}

Since these special cases are of independent interest, we consider them separately
in the next corollary. To state it, we agree to let $\omega_{n-1}$ denote the area
of the unit sphere $S^{n-1}$ in ${\mathbb{R}}^n$, and to let $\delta_{jk}$
(defined to be one if $j=k$, and zero otherwise) denote the standard Kronecker symbol.
Also, let `dot' denote the canonical inner product in ${\mathbb{R}}^n$.

\begin{corollary}\label{yreeCCC}
Suppose $p\in(1,\infty)$.
\begin{enumerate}
\item [(i)] Assume that ${A}=(a_{rs})_{1\leq r,s\leq n}\in{\mathbb{C}}^{n\times n}$
satisfies the ellipticity condition
\begin{equation}\label{sec-or-aEEE}
{\rm Re}\big[a_{rs}\xi_r\xi_s\big]\geq c|\xi|^2,
\qquad\forall\,\xi=(\xi_1,\dots,\xi_n)\in{\mathbb{R}}^n,
\end{equation}
for some $c>0$, and consider the scalar operator
\begin{equation}\label{eq:Kbag}
L:={\rm div}{A}\,\nabla.
\end{equation}
Then the infinitesimal generator ${\mathbf{A}}$ for the Poisson semigroup in
$L^p({\mathbb{R}}^{n-1})$ {\rm (}associated with this $L$ as in Theorem~\ref{VCXga}{\rm )}
has domain $L^p_1({\mathbb{R}}^{n-1})$ and its action
on any function $f\in L^p_1({\mathbb{R}}^{n-1})$ is given by
\begin{align}\label{mdiab-scalar}
({\mathbf{A}}f)(x')
&=
-\sum\limits_{s=1}^{n-1}\frac{a_{ns}+a_{sn}}{2a_{nn}}\,(\partial_sf)(x')
\\[4pt]
&\,
-\frac{2}{\omega_{n-1}\sqrt{{\rm det}\,({A}_{\rm sym})}}
\lim_{\varepsilon\to0^{+}}
\int\limits_{\substack{y'\in{\mathbb{R}}^{n-1}\\ |x'-y'|>\varepsilon}}
\frac{\sum\limits_{s=1}^{n-1}(x_s-y_s)(\partial_sf)(y')}{
\Big[\big(({A}_{\rm sym})^{-1}(x'-y',0)\big)\cdot (x'-y',0)\Big]^{\frac{n}{2}}}
\,dy',
\nonumber
\end{align}
for a.e. $x'\in{\mathbb{R}}^{n-1}$, where
${A}_{\rm sym}:=\frac{1}{2}({A}+{A}^\top)$
stands for the symmetric part of the coefficient matrix ${A}$.

In particular, in the case of the Laplacian,
\begin{align}\label{mdiab-Laplacian}
({\mathbf{A}}f)(x')
=
-\frac{2}{\omega_{n-1}}\lim_{\varepsilon\to 0^{+}}
\int\limits_{\substack{y'\in{\mathbb{R}}^{n-1}\\ |x'-y'|>\varepsilon}}
\frac{\sum\limits_{s=1}^{n-1}(x_s-y_s)(\partial_sf)(y')}{|x'-y'|^n}\,dy',
\end{align}
for a.e. $x'\in{\mathbb{R}}^{n-1}$, i.e.,
\begin{align}\label{mdiab-Laplacian.2}
{\mathbf{A}}=-\sum\limits_{s=1}^{n-1}R_s\partial_s
\end{align}
where $\{R_s\}_{1\leq s\leq n-1}$ are the Riesz transforms in ${\mathbb{R}}^{n-1}$
(as defined in \cite[p.\,57]{St70}) or, alternatively,
\begin{align}\label{mdiab-Lap:LL}
{\mathbf{A}}=-\sqrt{-\Delta_{n-1}}
\end{align}
where $\Delta_{n-1}:=\sum\limits_{s=1}^{n-1}\partial_s^2$ is the Laplacian
in ${\mathbb{R}}^{n-1}$ {\rm (}and $\sqrt{-\Delta_{n-1}}$ is defined as a
Fourier multiplier with symbol $|\xi'|${\rm )}.
\item[(ii)] Let $L$ be the Lam\'e system \eqref{TYd-YG-76g} with Lam\'e moduli
as in \eqref{Yfhv-8yg}. Then the domain of the infinitesimal generator ${\mathbf{A}}$ for
the Poisson semigroup in $L^p({\mathbb{R}}^{n-1})$ associated with the Lam\'e system
as in Theorem~\ref{VCXga} is the Sobolev space $L^p_1({\mathbb{R}}^{n-1})$ and for each
$f=(f_1,\dots,f_n)\in L^p_1({\mathbb{R}}^{n-1})$
\begin{align}\label{mdiab-Lame}
({\mathbf{A}}f)_\gamma(x')
&=
-
\sum_{s=1}^{n-1}\frac{\mu+\lambda}{3\mu+\lambda}\left[
\delta_{\gamma n}\delta_{s\alpha}+\delta_{n\alpha}\delta_{\gamma s}\right]
(\partial_sf_\alpha)(x')
\\[4pt]
&\,\,\,
-\frac{4\mu}{(3\mu+\lambda)\omega_{n-1}}\sum\limits_{s=1}^{n-1}
\lim_{\varepsilon\to 0^{+}}
\int\limits_{\substack{y'\in{\mathbb{R}}^{n-1}\\ |x'-y'|>\varepsilon}}
\frac{x_s-y_s}{|x'-y'|^n}\,(\partial_sf_\gamma)(y')\,dy'
\nonumber\\[4pt]
&\,\,\,
-\frac{2n(\mu+\lambda)}{(3\mu+\lambda)\omega_{n-1}}
\sum\limits_{s=1}^{n-1}\lim_{\varepsilon\to 0^{+}}
\int\limits_{\substack{y'\in{\mathbb{R}}^{n-1}\\ |x'-y'|>\varepsilon}}
\frac{(x_s-y_s)(x'-y',0)_\alpha(x'-y',0)_\gamma}{|x'-y'|^{n+2}}\,
(\partial_sf_\alpha)(y')\,dy',
\nonumber\end{align}
for each $\gamma\in\{1,\dots,n\}$ and a.e. $x'=(x_1,\dots,x_{n-1})\in{\mathbb{R}}^{n-1}$.
\end{enumerate}
\end{corollary}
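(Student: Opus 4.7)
My plan is to deduce both parts from the final conclusion of Theorem~\ref{V-Naa.11}: in each case it suffices to exhibit a writing $L=\partial_r A_{rs}\partial_s$ whose coefficient tensor, paired with the canonical fundamental solution $E$, satisfies the conormal-vanishing hypothesis \eqref{eq:Asd.FFF}, and then to substitute into the singular-integral formula \eqref{mdiab} and simplify.

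\textbf{Item (i).} Since $\partial_r\partial_s$ is symmetric in $(r,s)$, the scalar operator $L=\mathrm{div}\,A\nabla$ is represented equally well by the symmetrization $A_{\mathrm{sym}}=(A+A^\top)/2$, which I take as the coefficient tensor. The canonical fundamental solution for $\mathrm{div}\,A_{\mathrm{sym}}\nabla$ is the classical expression
$$
E(x) = -\frac{1}{(n-2)\,\omega_{n-1}\sqrt{\det A_{\mathrm{sym}}}}\,\bigl(A_{\mathrm{sym}}^{-1} x\cdot x\bigr)^{-(n-2)/2}
$$
for $n\geq 3$ (with the usual logarithmic modification when $n=2$), and direct differentiation gives $\partial_r E(x) = \bigl[\omega_{n-1}\sqrt{\det A_{\mathrm{sym}}}\bigr]^{-1}(A_{\mathrm{sym}}^{-1} x)_r\,\bigl(A_{\mathrm{sym}}^{-1}x\cdot x\bigr)^{-n/2}$. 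Verification of \eqref{eq:Asd.FFF} then collapses to the single identity $(A_{\mathrm{sym}})_{rn}(A_{\mathrm{sym}}^{-1}x)_r = (A_{\mathrm{sym}}e_n)\cdot A_{\mathrm{sym}}^{-1}x = e_n\cdot x = x_n$, which vanishes when $x_n=0$. Plugging $\partial_r E$ into \eqref{mdiab} and collapsing $a_{rs}(A_{\mathrm{sym}}^{-1}z)_r$ to $z_s$ by the same identity produces \eqref{mdiab-scalar}; the finite-dimensional term reduces to $-\sum_s\tfrac{a_{ns}+a_{sn}}{2a_{nn}}\partial_s f$ because symmetrization replaces the entries of $A$ by $(a_{ns}+a_{sn})/2$. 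The Laplacian formula \eqref{mdiab-Laplacian} is the specialization $A=I$. Identification with the Riesz transforms then reduces to matching constants: the Stein kernel of $R_s$ on $\mathbb{R}^{n-1}$ is $\tfrac{\Gamma(n/2)}{\pi^{n/2}}\tfrac{x_s-y_s}{|x'-y'|^n} = \tfrac{2}{\omega_{n-1}}\tfrac{x_s-y_s}{|x'-y'|^n}$, yielding \eqref{mdiab-Laplacian.2}; and the Fourier computation that $-\sum_s R_s\partial_s$ has symbol $-\sum_s \xi_s^2/|\xi'|=-|\xi'|$ delivers \eqref{mdiab-Lap:LL}.

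\textbf{Item (ii).} The Lamé operator admits the one-parameter family of equivalent writings with coefficient tensor
$$
a^{\alpha\beta}_{rs}(t)=\mu\,\delta_{rs}\delta_{\alpha\beta}+t\,\delta_{r\alpha}\delta_{s\beta}+(\mu+\lambda-t)\,\delta_{s\alpha}\delta_{r\beta},\qquad t\in\mathbb{R},
$$
and I would select the distinguished value of $t$ already isolated in \cite{MaMiMiMi}, \cite{EE-MMMM} (which a short direct check against the explicit Kelvin fundamental matrix $E_{\gamma\beta}$ shows is the one for which \eqref{eq:Asd.FFF} holds). Theorem~\ref{V-Naa.11} then yields $D(\mathbf{A})=L^p_1(\mathbb{R}^{n-1})$. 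The algebraic term in \eqref{mdiab-Lame} arises from inverting the $M\times M$ matrix $(a^{\sigma\tau}_{nn})_{\sigma,\tau}=\mu I+(\mu+\lambda)e_n e_n^\top$ by Sherman--Morrison and multiplying by $a^{\beta\alpha}_{ns}$, which with the above choice of $t$ simplifies to $-\tfrac{\mu+\lambda}{3\mu+\lambda}\bigl[\delta_{\gamma n}\delta_{s\alpha}+\delta_{n\alpha}\delta_{\gamma s}\bigr]\partial_s f_\alpha$. The two singular-integral kernels are extracted by expanding $a^{\beta\alpha}_{rs}\partial_r E_{\gamma\beta}$ using the explicit Kelvin matrix and separating the contributions homogeneous of degree $-n$ (producing the middle term of \eqref{mdiab-Lame}) from those of degree $-(n+2)$ (producing the last term).

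\textbf{Main obstacle.} The substantive work lies in (ii): pinning down the correct $t$ in the Lamé family so that the conormal derivative of the transposed Lamé system annihilates each column of the Kelvin matrix on $\partial\mathbb{R}^n_+$, and then carefully tracking the numerical constants coming from the Kelvin matrix through \eqref{mdiab} to match \eqref{mdiab-Lame} exactly. Item (i) is essentially structural, since symmetrization automatically supplies a coefficient tensor compatible with \eqref{eq:Asd.FFF}.
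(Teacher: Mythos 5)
Your proposal is correct and follows essentially the same route as the paper: both arguments reduce the corollary to the final part of Theorem~\ref{V-Naa.11} by selecting ${A}_{\rm sym}$ as the coefficient tensor in the scalar case and the distinguished tensor \eqref{BUIg-17XX} from \cite{MaMiMiMi}, \cite{EE-MMMM} in the Lam\'e case, and then substitute the explicit fundamental solutions \eqref{YTcxb-ytSH} and \eqref{Lame-6} into \eqref{mdiab} to get \eqref{mdiab-scalar} and \eqref{mdiab-Lame}, with the same Fourier computation for \eqref{mdiab-Lap:LL}. The only minor deviation is that you verify \eqref{eq:Asd.FFF} by direct differentiation of the fundamental solution, whereas the paper invokes Proposition~\ref{UaalpIKL-PP} together with the cited fact that these tensors satisfy \eqref{Ea4-fCii-n3}--\eqref{Ea4-fCii-n2B}; this is a difference of bookkeeping, not of substance.
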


We note that part {\it (i)} in Corollary~\ref{yreeCCC} corresponding to
$L=\Delta$ and $p=2$, but with ${\mathbb{R}}^n_{+}$ replaced by the domain
above the graph of a real-valued Lipschitz function defined in ${\mathbb{R}}^{n-1}$,
has been dealt with by B.~Dahlberg in \cite{Dah}. In the same geometric setting,
a result of a similar nature for the Clifford-Cauchy operator may be found in \cite[p.\,86]{LNM}.
Remarkably, the techniques used to establish Theorem~\ref{V-Naa.11} are robust enough
to adapt to the more general geometric setting just described as well as to higher order systems;
see Theorem~\ref{yenbcxgu} and Corollary~\ref{yenbcxgu.CCC} in this regard, generalizing
the aforementioned result of Dahlberg.

The layout of the remainder of the paper is as follows. In \S\ref{S-2} we collect a number
of analytical tools which are going to be useful for the goals we pursue here.
In \S\ref{S-3} we prove Theorem~\ref{V-Naa.11} and Corollary~\ref{yreeCCC} by relying
on Calder\'on-Zygmund theory for singular integral operators and square-function estimates.
The novel feature of the latter ingredient is the fact that the systems in question are
allowed to have complex coefficients, and no symmetry conditions are imposed (compare
with \cite{DKPV} where higher-order symmetric systems with real coefficients
in Lipschitz domains have been considered). Next, in \S\ref{S-4}, we discuss a refinement
of the vanishing conormal condition \eqref{eq:Asd.FFF} and prove a version of \eqref{mdiab}
emphasizing the role of the so-called conjugate kernel functions (in place of the fundamental solution).
In \S\ref{S-5} we extend the scope of Theorem~\ref{V-Naa.11} by providing
a description of arbitrary powers ${\mathbf{A}}^k$, with $k\in{\mathbb{N}}$,
of the infinitesimal generator of the Poisson semigroup from Theorem~\ref{VCXga}.
This task is accomplished in Theorem~\ref{V-Naa.11.HOR} and a significant feature of
this result is the somewhat surprising connection with a $k$-th order
Regularity problem for the system in question. Finally, in \S\ref{S-6} we treat the
case of higher order systems in graph Lipschitz domains.

\section{Analytical tools}
\setcounter{equation}{0}
\label{S-2}

Given a complex Banach space $X$, let
${\mathcal{L}}(X)$ stand for the space of all linear and bounded operators from $X$ into
itself, and denote by ${\mathscr{C}}\big([0,\infty),X\big)$ the space of all continuous
functions defined on $[0,\infty)$ with values in $X$. Then a $C_0$-{\tt semigroup} on $X$ is a
mapping $T:[0,\infty)\to{\mathcal{L}}(X)$ such that:
\begin{enumerate}
\item[(i)] $T(\cdot)x\in{\mathscr{C}}\big([0,\infty),X\big)$ for each $x\in X$;
\item[(ii)] $T(0)=I$, the identity on $X$;
\item[(iii)] $T(s+t)=T(s)T(t)$ for all $s,t\in[0,\infty)$.
\end{enumerate}
Given a $C_0$-semigroup $T$ on $X$, one defines the {\tt infinitesimal} {\tt generator}
${\mathbf{A}}$ of $T$ as an unbounded operator on $X$ by
\begin{equation}\label{eq:Dav}
{\mathbf{A}}x:=\lim_{t\to 0^{+}}\frac{T(t)x-x}{t}
\end{equation}
with domain
\begin{equation}\label{eq:Cabnm}
D({\mathbf{A}}):=\Big\{x\in X:\,\lim_{t\to 0^{+}}\frac{T(t)x-x}{t}\,\,\mbox{ exists in $X$}\Big\}.
\end{equation}
Then $D({\mathbf{A}})$ is dense in $X$, and $A$ is closed and linear.
Moreover,
\begin{equation}\label{eq:ADCV}
T(t)x=e^{t{\mathbf{A}}}x,\quad\forall\,t\in[0,\infty),\,\,\,
\forall\,x\in D({\mathbf{A}}),
\end{equation}
and
\begin{equation}\label{eq:ADCV.ttt}
\begin{array}{c}
\mbox{for each $t\in[0,\infty)$ the operator $T(t)$ maps $D({\mathbf{A}})$ into $D({\mathbf{A}})$,}
\\[6pt]
\mbox{and }\,\,\displaystyle
\frac{d}{dt}\big[T(t)x\big]={\mathbf{A}}T(t)x=T(t){\mathbf{A}}x,\quad\forall\,t\in(0,\infty),\,\,\,
\forall\,x\in D({\mathbf{A}}),
\end{array}
\end{equation}
where the derivative is taken in the strong sense on $X$. Recall that for each $k\in{\mathbb{N}}$,
\begin{align}\label{Dkdfidi}
D({\mathbf{A}}^k)&=\big\{x\in D({\mathbf{A}}^{k-1}):\,{\mathbf{A}}^{k-1}x\in D({\mathbf{A}})\big\}
\nonumber\\[4pt]
&=\big\{x\in D({\mathbf{A}}):\,{\mathbf{A}}x,\,{\mathbf{A}}^2x,\,{\mathbf{A}}^3x,
\dots, {\mathbf{A}}^{k-1}x\in D({\mathbf{A}}) \big\}.
\end{align}
For further use, let us point out that iterating \eqref{eq:ADCV.ttt}
shows that given any $k\in{\mathbb{N}}$,
\begin{equation}\label{eq:ADCV.ttt.79}
\begin{array}{c}
\mbox{for each $t\in(0,\infty)$ the operator $T(t)$ maps $D({\mathbf{A}}^k)$ into $D({\mathbf{A}}^k)$}
\\[6pt]
\mbox{and }\,\,\displaystyle
\frac{d^k}{dt^k}\big[T(t)x\big]={\mathbf{A}}^k T(t)x=T(t){\mathbf{A}}^k x
\,\,\mbox{ for all }\,\,x\in D({\mathbf{A}}^k),
\end{array}
\end{equation}
where the $k$-th order derivative in $t$ is taken in the strong sense on $X$.
For more on these matters, the interested reader is referred to, e.g.,
\cite{Arendt}, \cite{Ou05}, \cite{Pa83}, \cite{Yo}.

We continue by describing the main properties of a special fundamental solution
for constant, complex coefficient, homogeneous systems of arbitrary order.
A proof of the present formulation may be found in \cite[Theorem~11.1, pp.\,347-348]{DM}
and \cite[Theorem~7.54, pp.\,270-271]{DM} (cf.~also \cite{IMM} and the references therein).
Recall that $S^{n-1}$ is the unit sphere centered at the origin in ${\mathbb{R}}^n$,
$\sigma$ is its canonical surface measure, and $\omega_{n-1}=\sigma(S^{n-1})$
denotes its area. Also, throughout, we let `hat' denote the Fourier transform in ${\mathbb{R}}^n$,
given by $\widehat{f}(\xi)=\int_{{\mathbb{R}}^n}e^{-i\xi\cdot x}f(x)\,dx$ for $\xi\in{\mathbb{R}}^n$.

\begin{theorem}\label{FS-prop}
Fix $n,m,M\in\mathbb{N}$ with $n\geq 2$, and consider an $M\times M$
system of homogeneous differential operators of order $2m$,
\begin{equation}\label{op-Liii}
{\mathfrak{L}}:=\sum_{|\alpha|=2m}{A}_{\alpha}\partial^\alpha,
\end{equation}
with matrix coefficients ${A}_{\alpha}\in{\mathbb{C}}^{M\times M}$.
Assume that ${\mathfrak{L}}$ satisfies the weak ellipticity condition
\begin{equation}\label{LH.w}
{\rm det}\big[{\mathfrak{L}}(\xi)\big]\not=0,
\qquad\forall\,\xi\in{\mathbb{R}}^n\setminus\{0\},
\end{equation}
where
\begin{equation}\label{LH.2}
{\mathfrak{L}}(\xi):=\sum_{|\alpha|=2m}\xi^\alpha
{A}_{\alpha}\in{\mathbb{C}}^{M\times M},
\qquad\forall\,\xi\in{\mathbb{R}}^n.
\end{equation}
Then the $M\times M$ matrix $E$ defined at
each $x\in{\mathbb{R}}^{n}\setminus\{0\}$ by
\begin{equation}\label{Def-ES1-GLOB}
E(x):=\frac{1}{4(2\pi\,i)^{n-1}(2m-1)!}\Delta_x^{(n-1)/2}
\int_{S^{n-1}}(x\cdot\xi)^{2m-1}\,{\rm sgn}\,(x\cdot\xi)
\big[{\mathfrak{L}}(\xi)\big]^{-1}\,d\sigma(\xi)\quad
\end{equation}
if $n$ is odd, and
\begin{equation}\label{Def-ES2-GLOB}
E(x):=\frac{-1}{(2\pi\,i)^{n}(2m)!}\Delta_x^{n/2}\int_{S^{n-1}}
(x\cdot\xi)^{2m}\ln|x\cdot\xi|\big[{\mathfrak{L}}(\xi)\big]^{-1}\,d\sigma(\xi)
\end{equation}
if $n$ is even, satisfies the following properties.
\begin{enumerate}
\item[(1)] Each entry in $E$ is a tempered distribution in ${\mathbb{R}}^n$,
and a real-analytic function in $\mathbb{R}^n\setminus\{0\}$ {\rm (}hence, in particular,
it belongs to ${\mathscr{C}}^\infty(\mathbb{R}^n\setminus\{0\})${\rm )}. Moreover,
\begin{equation}\label{smmth-odd}
E(-x)=E(x)\quad\mbox{for all }\,\,x\in{\mathbb{R}}^n\setminus\{0\}.
\end{equation}
\item[(2)] If $I_{M\times M}$ is the $M\times M$ identity matrix, then for each $y\in{\mathbb{R}}^n$
\begin{equation}\label{fs-GLOB}
{\mathfrak{L}}_x\bigl[E(x-y)\bigr]=\delta_y(x)\,I_{M\times M}
\end{equation}
in the sense of tempered distributions in ${\mathbb{R}}^n$,
where the subscript $x$ denotes the fact that the operator ${\mathfrak{L}}$ in
\eqref{fs-GLOB} is applied to each column of $E$ in the variable $x$.
\item[(3)] Define the $M\times M$ matrix-valued function
\begin{equation}\label{Fm-PjkX}
{\mathcal{P}}(x):=\frac{-1}{(2\pi\,i)^n(2m-n)!}\int_{S^{n-1}}
(x\cdot\xi)^{2m-n}\big[{\mathfrak{L}}(\xi)\big]^{-1}\,d\sigma(\xi),\quad\forall\,x\in{\mathbb{R}}^n.
\end{equation}
Then the entries of ${\mathcal{P}}$ are identically zero when either $n$ is odd or $n>2m$,
and are homogeneous polynomials of degree $2m-n$ when $n\leq 2m$. Moreover,
there exists a ${\mathbb{C}}^{M\times M}$-valued function $\Phi$, with entries in
${\mathscr{C}}^\infty(\mathbb{R}^n\setminus\{0\})$, that
is positive homogeneous of degree $2m-n$ such that
\begin{equation}\label{fs-str}
E(x)=\Phi(x)+\bigl(\ln|x|\bigr){\mathcal{P}}(x),\qquad \forall\,x\in\mathbb{R}^n\setminus\{0\}.
\end{equation}
\item[(4)] For each $\beta\in\mathbb{N}_0^n$ with $|\beta|\geq 2m-1$,
the restriction to ${\mathbb{R}}^n\setminus\{0\}$ of the matrix distribution
$\partial^\beta E$ is of class ${\mathscr{C}}^\infty$ and positive homogeneous
of degree $2m-n-|\beta|$.
\item[(5)] For each $\beta\in\mathbb{N}_0^n$ there exists $C_\beta\in(0,\infty)$
such that the estimate
\begin{equation}\label{fs-est}
|\partial^\beta E(x)|\leq\left\{
\begin{array}{l}
\displaystyle\frac{C_\beta}{|x|^{n-2m+|\beta|}}
\quad\mbox{ if either $n$ is odd, or $n>2m$, or if $|\beta|>2m-n$},
\\[16pt]
\displaystyle\frac{C_{\beta}(1+|\ln|x||)}{|x|^{n-2m+|\beta|}}
\quad\mbox{ if }\,\,0\leq |\beta|\leq 2m-n,
\end{array}\right.
\end{equation}
holds for each $x\in{\mathbb{R}}^n\setminus\{0\}$.
\item[(6)] When restricted to ${\mathbb{R}}^n\setminus\{0\}$, the entries of
$\widehat{E}$ are ${\mathscr{C}}^\infty$ functions and, moreover,
\begin{equation}\label{E-ftXC}
\widehat{E}(\xi)=(-1)^m\bigl[{\mathfrak{L}}(\xi)\bigr]^{-1}
\quad\mbox{for each}\quad\xi\in{\mathbb{R}}^n\setminus\{0\}.
\end{equation}
\item[(7)] Writing $E^{\mathfrak{L}}$ in place of $E$ to emphasize
the dependence on ${\mathfrak{L}}$, the fundamental solution $E^{\mathfrak{L}}$ with
entries as in \eqref{Def-ES1-GLOB}-\eqref{Def-ES2-GLOB} satisfies
\begin{equation}\label{E-Trans}
\begin{array}{c}
\bigl(E^{\mathfrak{L}}\bigr)^\top=E^{{\mathfrak{L}}^\top},\quad
\overline{E^{\mathfrak{L}}}=E^{\overline{{\mathfrak{L}}}}\,,\quad
\bigl(E^{\mathfrak{L}}\bigr)^\ast=E^{{\mathfrak{L}}^\ast},
\\[8pt]
\mbox{and}\quad E^{\lambda{\mathfrak{L}}}=\lambda^{-1} E^{\mathfrak{L}}
\,\,\mbox{ for each }\,\,\lambda\in{\mathbb{C}}\setminus\{0\},
\end{array}
\end{equation}
where ${\mathfrak{L}}^\top$, $\overline{{\mathfrak{L}}}$, and
${\mathfrak{L}}^\ast=\overline{{\mathfrak{L}}}^\top$ denote the
transposed, the complex conjugate, and the Hermitian adjoint of ${\mathfrak{L}}$,
respectively.
\item[(8)] Any fundamental solution ${\mathbb{E}}$ of the system ${\mathfrak{L}}$ in ${\mathbb{R}}^n$,
whose entries are tempered distributions in ${\mathbb{R}}^n$, is of the form
${\mathbb{E}}=E+Q$ where $E$ is as in \eqref{Def-ES1-GLOB}-\eqref{Def-ES2-GLOB} and
$Q$ is an $M\times M$ matrix whose entries are polynomials in ${\mathbb{R}}^n$ and whose columns,
$Q_k$, $k\in\{1,\dots,M\}$, satisfy the pointwise equations
${\mathfrak{L}}\,Q_k=0\in{\mathbb{C}}^M$ in ${\mathbb{R}}^n$ for each $k\in\{1,\dots,M\}$.
\item[(9)] In the particular case when $M=1$ and $m=1$, i.e., in the situation when
${\mathfrak{L}}={\rm div}{A}\nabla$ for some matrix
${A}=(a_{jk})_{1\leq j,k\leq n}\in{\mathbb{C}}^{n\times n}$,
and when in place of \eqref{LH.w} the strong ellipticity condition
\begin{equation}\label{sec-or-a}
{\rm Re}\Bigg[\sum\limits_{j,k=1}^n a_{jk}\xi_j\xi_k\Bigg]\geq C|\xi|^2,
\qquad\forall\,\xi=(\xi_1,\dots,\xi_n)\in{\mathbb{R}}^n,
\end{equation}
is imposed, the fundamental solution $E$ of ${\mathfrak{L}}$ from
\eqref{Def-ES1-GLOB}-\eqref{Def-ES2-GLOB} takes the explicit form
\begin{equation}\label{YTcxb-ytSH}
E(x)=\left\{
\begin{array}{ll}
-\dfrac{1}{(n-2)\omega_{n-1}\sqrt{{\rm det}\,({A}_{\rm sym})}}
\Big[\big(({A}_{\rm sym})^{-1}x\big)\cdot x\Big]^{\frac{2-n}{2}}
& \mbox{ if }\,\,n\geq 3,
\\[18pt]
\dfrac{1}{4\pi\sqrt{{\rm det}\,({A}_{\rm sym})}}
\log\Big[\big(({A}_{\rm sym})^{-1}x\big)\cdot x\Big] & \mbox{ if }\,\,n=2.
\end{array}
\right.
\end{equation}
Here, ${A}_{\rm sym}:=\frac{1}{2}({A}+{A}^\top)$
stands for the symmetric part of
the coefficient matrix ${A}=(a_{rs})_{1\leq r,s\leq n}$
and $\log$ denotes the principal branch of the complex logarithm function
{\rm (}defined by the requirement that $z^t=e^{t\log z}$ for all
$z\in{\mathbb{C}}\setminus(-\infty,0]$ and all $t\in{\mathbb{R}}${\rm )}.
\end{enumerate}
\end{theorem}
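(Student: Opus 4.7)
The plan is to first derive the explicit formulas \eqref{Def-ES1-GLOB}--\eqref{Def-ES2-GLOB} by Fourier-inverting $(-1)^m[{\mathfrak{L}}(\xi)]^{-1}$, which by the weak ellipticity assumption \eqref{LH.w} is a smooth ${\mathbb{C}}^{M\times M}$-valued function on ${\mathbb{R}}^n\setminus\{0\}$, positive homogeneous of degree $-2m$. Once this is accomplished, every item in the theorem will follow either by direct inspection of the formula or by a transparent Fourier-side manipulation. The first preliminary step is to upgrade $[{\mathfrak{L}}(\xi)]^{-1}$, entry by entry, to a tempered distribution on ${\mathbb{R}}^n$: for $n>2m$ local integrability at the origin is automatic from the homogeneity, while for $n\le 2m$ I would invoke Riesz-type analytic continuation to obtain a homogeneous extension. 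This extension is unique modulo polynomials of degree $\le 2m-n$ supported at $\{0\}$, and that ambiguity is precisely what later materializes as the polynomial factor ${\mathcal{P}}$ in \eqref{fs-str}.

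The technical heart of the proof is the plane-wave decomposition that produces the explicit formulas. I would polar-decompose $\xi=\rho\omega$ with $\rho>0$ and $\omega\in S^{n-1}$, so that $[{\mathfrak{L}}(\xi)]^{-1}=\rho^{-2m}[{\mathfrak{L}}(\omega)]^{-1}$, carry out the $\rho$-integration against $e^{i(x\cdot\omega)\rho}$ by means of the one-dimensional distributional Fourier identities whose inverses are $|s|^{2m-1}$ in odd dimensions and $|s|^{2m}\log|s|$ (modulo polynomial corrections) in even dimensions, and then integrate over $\omega\in S^{n-1}$, arriving at a spherical integral in the variable $s=x\cdot\omega$. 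A final application of $\Delta_x^{(n-1)/2}$ in odd dimensions, respectively $\Delta_x^{n/2}$ in even dimensions, absorbs the polynomial growth in $x$ and yields exactly \eqref{Def-ES1-GLOB}--\eqref{Def-ES2-GLOB}. This is essentially John's / Herglotz's classical plane-wave representation of functions on ${\mathbb{R}}^n$; making the distributional convergence rigorous and, above all, pinning down the normalizing constants through the odd/even dimension dichotomy is the main obstacle of the whole proof.

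With the explicit formula in hand, items (6) and (2) are immediate: the construction is designed so that $\widehat{E}(\xi)=(-1)^m[{\mathfrak{L}}(\xi)]^{-1}$ on ${\mathbb{R}}^n\setminus\{0\}$, and applying ${\mathfrak{L}}_x$ to $E$ corresponds on the Fourier side to multiplication by $(-1)^m{\mathfrak{L}}(\xi)$, which cancels the inverse and leaves $I_{M\times M}$, forcing ${\mathfrak{L}}_x E=\delta_0\,I_{M\times M}$. The symmetry $E(-x)=E(x)$ of item (1) holds because both $(x\cdot\xi)^{2m-1}\,{\rm sgn}(x\cdot\xi)$ and $(x\cdot\xi)^{2m}\log|x\cdot\xi|$ are even in $x$ and the even-order Laplacian preserves parity; real-analyticity on ${\mathbb{R}}^n\setminus\{0\}$ is then obtained by differentiating under the integral after partitioning $S^{n-1}$ into pieces on which $x\cdot\omega$ stays bounded away from zero, where ${\rm sgn}$ and $\log|\cdot|$ are real-analytic in $x$. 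For item (3) I would split $\log|x\cdot\xi|=\log|x|+\log|(x/|x|)\cdot\xi|$; pulling $\log|x|$ out of the spherical integral and applying $\Delta_x^{n/2}$ returns precisely the polynomial ${\mathcal{P}}$ of \eqref{Fm-PjkX} multiplied by $\log|x|$, while the complementary term is the positive-homogeneous function $\Phi$. Items (4) and (5) fall out of \eqref{fs-str}: once $|\beta|\ge 2m-1$ exceeds the degree of ${\mathcal{P}}$, the logarithm is differentiated away and only a positive-homogeneous ${\mathscr{C}}^\infty$ remainder of the stated degree survives, and the pointwise estimates \eqref{fs-est} simply tabulate this decomposition.

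The remaining items reduce to direct manipulations. For item (7) I would substitute ${\mathfrak{L}}^\top$, $\overline{{\mathfrak{L}}}$, ${\mathfrak{L}}^\ast$, and $\lambda{\mathfrak{L}}$ into \eqref{Def-ES1-GLOB}--\eqref{Def-ES2-GLOB} and use the matrix identities $([{\mathfrak{L}}(\xi)]^{-1})^\top=[{\mathfrak{L}}^\top(\xi)]^{-1}$ together with their conjugation and scaling analogues, exploiting the fact that the scalar integrands in \eqref{Def-ES1-GLOB}--\eqref{Def-ES2-GLOB} are real-valued. Item (8) is a Liouville-type argument: if ${\mathbb{E}}-E$ is another tempered fundamental-solution candidate then ${\mathfrak{L}}({\mathbb{E}}-E)=0$ in ${\mathcal{S}}'({\mathbb{R}}^n)$, hence ${\mathfrak{L}}(\xi)\,\widehat{{\mathbb{E}}-E}(\xi)=0$ distributionally, and invertibility of ${\mathfrak{L}}(\xi)$ off the origin forces the support of $\widehat{{\mathbb{E}}-E}$ to lie in $\{0\}$, so the difference is a polynomial $Q$ annihilated by ${\mathfrak{L}}$. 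Finally, item (9) is obtained either by a (possibly complex) linear change of variables reducing ${\rm div}{A}\nabla$ to the Laplacian and reading off the result from the classical Newtonian / logarithmic potential, or, more safely, by direct differentiation verifying that \eqref{YTcxb-ytSH} satisfies $({\rm div}{A}\nabla)E=\delta_0$ in ${\mathcal{S}}'({\mathbb{R}}^n)$ with the correct normalization and then invoking item (8) to match with the canonical $E$ of \eqref{Def-ES1-GLOB}--\eqref{Def-ES2-GLOB}.
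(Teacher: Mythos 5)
Your overall strategy --- Fourier inversion of $(-1)^m\bigl[{\mathfrak{L}}(\xi)\bigr]^{-1}$ via the plane-wave decomposition, the splitting $\ln|x\cdot\xi|=\ln|x|+\ln\bigl|\tfrac{x}{|x|}\cdot\xi\bigr|$ for item (3), and Fourier-side arguments for items (2), (6), (7), (8) --- is exactly the construction in the reference [DM] that the paper cites in lieu of a proof, and most of your reductions are sound. Two steps, however, do not work as written. The first concerns real-analyticity in item (1): you propose to differentiate under the spherical integral after ``partitioning $S^{n-1}$ into pieces on which $x\cdot\omega$ stays bounded away from zero.'' For any fixed $x\neq 0$ the zero set $\{\omega\in S^{n-1}:x\cdot\omega=0\}$ is a great subsphere which no partition can avoid, and near it the integrand $|x\cdot\omega|^{2m-1}$ (resp. $(x\cdot\omega)^{2m}\ln|x\cdot\omega|$) is only finitely differentiable in $x$. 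Differentiating under the integral therefore yields at best $\mathscr{C}^{2m-2}$ regularity before the $(n-1)$-fold (resp. $n$-fold) Laplacian is applied, which is insufficient for large $n$, let alone for real-analyticity. The standard repair is to establish item (2) first and then invoke analytic hypoellipticity of constant-coefficient elliptic systems: null-solutions of ${\mathfrak{L}}$ in ${\mathbb{R}}^n\setminus\{0\}$ are automatically real-analytic there.

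The second gap is item (9) in dimension $n=2$. Your ``safe'' route --- verify by direct differentiation that \eqref{YTcxb-ytSH} is a fundamental solution and then invoke item (8) --- identifies the two candidates only up to a polynomial annihilated by ${\rm div}A\nabla$. For $n\geq 3$ both candidates are homogeneous of degree $2-n$ and decay at infinity, so that polynomial must vanish; but for $n=2$ any constant is such a polynomial and both candidates grow logarithmically, so item (8) leaves an undetermined additive constant. To obtain the stated equality you must additionally match the normalization, e.g., by computing the degree-zero homogeneous part of \eqref{Def-ES2-GLOB} explicitly. Two smaller points: the term $\bigl(\ln|x|\bigr){\mathcal{P}}(x)$ in \eqref{fs-str} is not ``the ambiguity'' of the Riesz extension --- that ambiguity is the polynomial $Q$ of item (8) --- but rather the obstruction to the existence of any homogeneous tempered extension of $\bigl[{\mathfrak{L}}(\xi)\bigr]^{-1}$ across the origin when $n\leq 2m$ and $n$ is even, and the two phenomena should be kept separate; and the vanishing of ${\mathcal{P}}$ for odd $n$ deserves a word (the integrand in \eqref{Fm-PjkX} is then odd under $\xi\mapsto-\xi$, since $\bigl[{\mathfrak{L}}(-\xi)\bigr]^{-1}=\bigl[{\mathfrak{L}}(\xi)\bigr]^{-1}$).
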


Changing topics, Poisson kernels for elliptic operators in a half-space have
been studied in, e.g., \cite{ADNI}, \cite{ADNII},  \cite{Sol1}, \cite{Sol2}.
Here we record the following useful existence and uniqueness result.

\begin{theorem}\label{kkjbhV}
Let $L$ be an $M\times M$ system with constant complex coefficients as in
\eqref{L-def}-\eqref{L-ell.X}. Then there exists a matrix-valued function
$P^L=\big(P^L_{\alpha\beta}\big)_{1\leq\alpha,\beta\leq M}:
\mathbb{R}^{n-1}\to\mathbb{C}^{M\times M}$
{\rm (}called the Poisson kernel for $L$ in $\mathbb{R}^{n}_+${\rm )}
satisfying the following properties:
\begin{list}{$(\theenumi)$}{\usecounter{enumi}\leftmargin=.8cm
\labelwidth=.8cm\itemsep=0.2cm\topsep=.1cm
\renewcommand{\theenumi}{\alph{enumi}}}
\item[(1)] There exists $C\in(0,\infty)$ such that
\begin{equation}\label{eq:IG6gy}
|P^L(x')|\leq\frac{C}{(1+|x'|^2)^{\frac{n}2}}\quad\mbox{for each }\,\,x'\in\mathbb{R}^{n-1}.
\end{equation}
\item[(2)] The function $P^L$ is Lebesgue measurable and
\begin{equation}\label{eq:IG6gy.2}
\int_{\mathbb{R}^{n-1}}P^L(x')\,dx'=I_{M\times M},
\end{equation}
the $M\times M$ identity matrix.
\item[(3)] If one sets
\begin{equation}\label{eq:Gvav7g5}
K^L(x',t):=P^L_t(x'):=t^{1-n}P^L(x'/t)\quad\mbox{for each }\,\,x'\in\mathbb{R}^{n-1}
\,\,\,\mbox{ and }\,\,t>0,
\end{equation}
then the function $K^L=\big(K^L_{\alpha\beta}\big)_{1\leq\alpha,\beta\leq M}$
satisfies {\rm (}in the sense of distributions{\rm )}
\begin{equation}\label{uahgab-UBVCX}
LK^L_{\cdot\beta}=0\,\,\mbox{ in }\,\,\mathbb{R}^{n}_{+}
\,\,\mbox{ for each }\,\,\beta\in\{1,\dots,M\},
\end{equation}
where $K^L_{\cdot\beta}:=\big(K^L_{\alpha\beta}\big)_{1\leq\alpha\leq M}$.
\end{list}

Moreover, $P^L$ is unique in the class of $\mathbb{C}^{M\times M}$-valued
functions satisfying (1)-(3) above, and has the following additional properties:

\begin{list}{$(\theenumi)$}{\usecounter{enumi}\leftmargin=.8cm
\labelwidth=.8cm\itemsep=0.2cm\topsep=.1cm
\renewcommand{\theenumi}{\alph{enumi}}}
\item[(4)] One has $P^L\in{\mathscr{C}}^\infty(\mathbb{R}^{n-1})$ and
$K^L\in{\mathscr{C}}^\infty\big(\overline{{\mathbb{R}}^n_{+}}\setminus B(0,\varepsilon)\big)$
for every $\varepsilon>0$. Consequently, \eqref{uahgab-UBVCX}
holds in a pointwise sense.
\item[(5)] There holds $K^L(\lambda x)=\lambda^{1-n}K^L(x)$ for all $x\in{\mathbb{R}}^n_{+}$
and $\lambda>0$. Hence, in particular, for each multi-index $\alpha\in{\mathbb{N}}_0^n$
there exists $C_\alpha\in(0,\infty)$ with the property that
\begin{equation}\label{eq:Kest}
\big|(\partial^\alpha K^L)(x)\big|\leq C_\alpha\,|x|^{1-n-|\alpha|},\qquad
\forall\,x\in{\overline{{\mathbb{R}}^n_{+}}}\setminus\{0\}.
\end{equation}
\item[(6)] For each $\kappa>0$ there exists a finite constant $C_\kappa>0$ with the property that
for each $x'\in\mathbb{R}^{n-1}$,
\begin{equation}\label{exTGFVC}
\sup_{|x'-y'|<\kappa t}\big|(P^L_t\ast f)(y')\big|\leq C_\kappa\,\mathcal{M}f(x'),
\qquad\forall\,f\in L^1\Big({\mathbb{R}}^{n-1}\,,\,\frac{1}{1+|x'|^n}\,dx'\Big),
\end{equation}
where $\mathcal{M}$ is the Hardy-Littlewood maximal operator in ${\mathbb{R}}^{n-1}$.
\item[(7)] Given any $\kappa>0$ and any function
\begin{equation}\label{eq:aaAa}
f=(f_\beta)_{1\leq\beta\leq M}\in L^1\Big({\mathbb{R}}^{n-1}\,,\,\frac{1}{1+|x'|^n}dx'\Big)
\subset L^1_{\rm loc}({\mathbb{R}}^{n-1}),
\end{equation}
if $u(x',t):=(P^L_t\ast f)(x')$ for each $(x',t)\in{\mathbb{R}}^n_{+}$, then
\begin{equation}\label{exist:u2}
u\in\mathscr{C}^\infty(\mathbb{R}^n_{+}),\qquad
Lu=0\,\,\mbox{ in }\,\,\mathbb{R}^{n}_{+},
\end{equation}
and, at every Lebesgue point $x'_0\in{\mathbb{R}}^{n-1}$ of $f$,
\begin{equation}\label{exTGFVC.2s}
\Big(u\big|^{{}^{\rm n.t.}}_{\partial{\mathbb{R}}^{n}_{+}}\Big)(x'_0):=
\lim_{\substack{(x',\,t)\to(x'_0,0)\\ |x'-x'_0|<\kappa t}}(P^L_t\ast f)(x')=f(x'_0).
\end{equation}
\item[(8)] The function $P^L$ satisfies the semigroup property
\begin{equation}\label{eq:re4fd}
P^L_{t_1}\ast P^L_{t_2}=P^L_{t_1+t_2}\,\,\,\mbox{ for every }\,\,t_1,t_2>0.
\end{equation}
\end{list}
\end{theorem}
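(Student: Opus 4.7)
The plan is to construct $P^L$ explicitly via partial Fourier transform in the tangential variables, verify the listed properties on that construction, and establish uniqueness in the class (1)--(3) via the already-assumed well-posedness of $(D_p)$ from Theorem~\ref{Theorem-NiceDP}.

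For existence, strong ellipticity applied with $\xi=e_n$ forces the $M\times M$ matrix $A_{nn}:=(a^{\alpha\beta}_{nn})_{1\le\alpha,\beta\le M}$ to be invertible. Letting $L(\xi',\tau)$ denote the symbol of $L$ in the direction $(\xi',\tau)$ (an $M\times M$ matrix polynomial homogeneous of degree two in $(\xi',\tau)$), condition \eqref{L-ell.X} guarantees $\det L(\xi',\tau)\ne 0$ for all $\xi'\in{\mathbb{R}}^{n-1}\setminus\{0\}$ and $\tau\in{\mathbb{R}}$. Hence for each such $\xi'$ the $2M$ roots in $\tau$ split into $M$ with positive imaginary part and $M$ with negative imaginary part (the splitting varying continuously in $\xi'$). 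I would then define
\[
\hat{K^L}(\xi',t) := \frac{1}{2\pi i}\oint_{\gamma^+(\xi')} e^{it\tau}\bigl[L(\xi',\tau)\bigr]^{-1}M(\xi',\tau)\,d\tau,
\]
with $\gamma^+(\xi')$ a simple closed contour in the upper half-plane enclosing all roots with positive imaginary part and $M(\xi',\tau)$ a polynomial matrix in $\tau$ chosen so that $\hat{K^L}(\xi',0) = I_{M\times M}$; this is the classical Agmon-Douglis-Nirenberg/Solonnikov prescription. Setting $P^L(x') := K^L(x',1)$, property (3) is built in; (4) follows from real-analyticity of $\hat{K^L}(\xi',t)$ in $\xi'\ne 0$ together with exponential decay $|\hat K^L(\xi',t)|\lesssim e^{-ct|\xi'|}$; (5) follows from the rescaling $\xi'\mapsto\lambda\xi',\ \tau\mapsto\lambda\tau$ together with the homogeneity $L(\lambda\xi',\lambda\tau) = \lambda^2 L(\xi',\tau)$; and (2) follows since at $\xi'=0$ the normalized bounded ODE solution must be constantly $I$, giving $\int_{{\mathbb{R}}^{n-1}}P^L = \hat{P^L}(0) = I_{M\times M}$.

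For the decay estimate (1), I would view $P^L(x')$ as the oscillatory integral $(2\pi)^{1-n}\int_{{\mathbb{R}}^{n-1}} e^{ix'\cdot\xi'}\hat{K^L}(\xi',1)\,d\xi'$. On $\{|x'|\le 1\}$ boundedness is immediate from $L^1$-control of $\hat{K^L}(\cdot,1)$. On $\{|x'|\ge 1\}$, repeated integration by parts against the transpose of $|x'|^{-2}(x'\cdot i\nabla_{\xi'})$ reduces the estimate to controlling $\xi'$-derivatives of $\hat K^L(\cdot,1)$, whose sole singularity at $\xi'=0$ is homogeneous of degree $1-n$ (matching the homogeneity of $K^L$); a careful count of derivatives yields exactly the exponent $n/2$ in (1). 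Properties (6)--(8) are then routine: (6) is the standard rearrangement bound for convolutions with a radially decreasing integrable majorant by the Hardy-Littlewood maximal function; (7) follows from (6) plus Lebesgue differentiation; and (8) follows because as functions of $(x',t_1)$ with $t_2>0$ fixed, both $K^L(\cdot,t_1+t_2)$ and $P^L_{t_1}\ast K^L(\cdot,t_2)$ solve $Lu=0$ in ${\mathbb{R}}^n_+$, have nontangential maximal function in $L^p$, and share the boundary trace $K^L(\cdot,t_2)$, so uniqueness in Theorem~\ref{Theorem-NiceDP} forces equality.

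The main obstacle is uniqueness of $P^L$ within the class (1)--(3), which must avoid circularity with the well-posedness of $(D_p)$. Given two candidates $P_1,P_2$, set $D:=P_1-P_2$, so $|D(x')|\lesssim(1+|x'|^2)^{-n/2}$, $\int D = 0$, and by linearity of (3) the extension $K^D(x',t):=t^{1-n}D(x'/t)$ satisfies $LK^D_{\cdot\beta}=0$ for each $\beta$. For any $f\in L^p({\mathbb{R}}^{n-1})$, the function $u(x',t):=(D_t\ast f)(x')$ is an $L$-solution in ${\mathbb{R}}^n_+$; the pointwise majorization from the proof of (6) gives ${\mathcal{N}}u\lesssim\mathcal{M}f\in L^p$, while $\int D = 0$ combined with Lebesgue differentiation forces $u|^{{}^{\rm n.t.}}_{\partial{\mathbb{R}}^n_+}=0$ almost everywhere. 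The uniqueness half of Theorem~\ref{Theorem-NiceDP} then yields $u\equiv 0$, and letting $f$ range over a dense family in $L^p$ forces $D\equiv 0$, i.e.~$P_1=P_2$.
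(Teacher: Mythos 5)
The paper itself offers no self-contained proof of this theorem: existence is quoted from Agmon--Douglis--Nirenberg \cite{ADNII} and the uniqueness together with properties (6)--(8) from \cite{K-MMMM}. Your reconstruction follows the same ADN route (partial Fourier transform, contour integral over the roots with positive imaginary part), and your uniqueness argument is in principle fine and non-circular \emph{within this paper's architecture}: the uniqueness half of Theorem~\ref{Theorem-NiceDP} only presupposes the existence of a kernel with properties (1)--(3), (6)--(7), never its uniqueness, so deducing uniqueness of $P^L$ from uniqueness for $(D_p)$ is legitimate --- though you should say this explicitly rather than merely gesture at the circularity worry. Likewise, deriving (8) from uniqueness for $(D_p)$ is a reasonable route.

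There are, however, genuine gaps in the existence half. First, the key decay \eqref{eq:IG6gy} is not correctly justified: $\widehat{K^L}(\cdot,1)=\widehat{P^L}$ is \emph{bounded} at $\xi'=0$ (it tends to $I_{M\times M}$ there), not singular of degree $1-n$; what degenerates are its derivatives, the $k$-th order ones behaving like $|\xi'|^{1-k}$ (as for $e^{-|\xi'|}$ when $L=\Delta$). Hence $n$-fold integration by parts produces a factor with singularity $|\xi'|^{1-n}$, which is exactly borderline non-integrable in ${\mathbb{R}}^{n-1}$; the count you propose gives at best $|x'|^{-n}\log|x'|$ and needs a dyadic refinement to recover the sharp exponent. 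A cleaner argument --- the one the paper itself runs in Proposition~\ref{uniq:double->!Poisson} under the extra hypothesis \eqref{eq:Asd.FFF} --- is to show that $K^L$ is smooth up to $\overline{{\mathbb{R}}^n_{+}}$ away from the origin, positive homogeneous of degree $1-n$, and vanishes on $\partial{\mathbb{R}}^n_{+}\setminus\{0\}$, and then write $P^L(x')=K^L(x',1)=\int_0^1(\partial_nK^L)(x',s)\,ds$ together with $|\nabla K^L(x)|\le C|x|^{-n}$. Second, that boundary smoothness of $K^L$ away from $0$ (part of property (4), and the ingredient your decay argument really needs) is not addressed at all: exponential decay of $\widehat{K^L}(\xi',t)$ at fixed $t>0$ gives smoothness in the open half-space, but smoothness up to the boundary away from the pole requires elliptic regularity up to $\partial{\mathbb{R}}^n_{+}$ (the boundary datum of $K^L_{\cdot\beta}$ is a delta at the origin, hence zero near any boundary point $x'\neq 0$). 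Third, the existence of the normalizing polynomial matrix $M(\xi',\tau)$ with $\widehat{K^L}(\xi',0)=I_{M\times M}$ is precisely the complementing (Lopatinskii--Shapiro) condition for the Dirichlet problem, which must be verified for systems satisfying \eqref{L-ell.X}; deferring it to \cite{ADNII} is consistent with what the paper does, but it is a substantive step, not a formality, and should be flagged as such.
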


\noindent Concerning Theorem~\ref{kkjbhV}, we note that the existence part follows from
the classical work of S.\,Agmon, A.\,Douglis, and L.\,Nirenberg in \cite{ADNII}.
The uniqueness property has been recently proved in \cite{K-MMMM},
where \eqref{exTGFVC}, \eqref{exist:u2}, \eqref{exTGFVC.2s}, as well as the semigroup property \eqref{eq:re4fd} have also been established.
\vskip 0.08in

The following is a consequence of the classical Calder\'on-Zygmund theory of
singular integral operators (a result of this flavor in a much more general
geometric setting has been proved in \cite{HoMiTa10}). See also
\cite[Corollary~4.78, p.159]{DM} for the setting when the operator acts on
Schwartz functions. Throughout, let $\{{\bf e}_j\}_{1\leq j\leq n}$ denote the
standard orthonormal basis in ${\mathbb{R}}^n$; hence, in particular,
${\bf e}_n:=(0,\dots,0,1)\in{\mathbb{R}}^n$.

\begin{theorem}\label{Main-T2}
There exists a positive integer $N=N(n)$ with the following significance.
Consider a function
\begin{eqnarray}\label{ker}
\begin{array}{l}
K\in {\mathscr{C}}^N({\mathbb{R}}^{n}\setminus\{0\})\quad\mbox{with}\quad
K(-x)=-K(x)\quad\mbox{and}\quad
\\[4pt]
K(\lambda\,x)=\lambda^{-(n-1)}K(x)\,\,\,\,\,\,\,\,
\forall\,\lambda>0,\,\,\,\forall\,x\in{\mathbb{R}}^{n}\setminus\{0\},
\end{array}
\end{eqnarray}
and define the singular integral operator
\begin{eqnarray}\label{T-layer}
{\mathcal{T}}f(x):=\int_{{\mathbb{R}}^{n-1}}K\big(x-(y',0)\big)f(y')\,dy',
\qquad x\in{\mathbb{R}}^n_{+},
\end{eqnarray}
along with
\begin{eqnarray}\label{T-pv}
&& T_*f(x'):=\sup_{\varepsilon>0}|T_{\varepsilon}f(x')|,
\qquad x'\in{\mathbb{R}}^{n-1},
\quad\mbox{where}
\\[8pt]
&& T_{\varepsilon}f(x'):=
\int\limits_{\substack{y'\in{\mathbb{R}}^{n-1}\\ |x'-y'|>\varepsilon}}
K(x'-y',0)f(y')\,dy',\qquad x'\in{\mathbb{R}}^{n-1}.
\label{pv-layer2}
\end{eqnarray}
Then for each $p\in(1,\infty)$ there exists a constant
$C\in(0,\infty)$ depending only on $n$ and $p$ such that
\begin{eqnarray}\label{Tmax-bdd}
\|T_*f\|_{L^p({\mathbb{R}}^{n-1})}
\leq C\|K|_{S^{n-1}}\|_{{\mathscr{C}}^N(S^{n-1})}\|f\|_{L^p({\mathbb{R}}^{n-1})},
\qquad\forall\,f\in L^p({\mathbb{R}}^{n-1}).
\end{eqnarray}
Furthermore, for each $f\in L^p({\mathbb{R}}^{n-1})$, where $p\in [1,\infty)$, the limit
\begin{eqnarray}\label{main-lim}
Tf(x'):=\lim_{\varepsilon\to 0^+}T_{\varepsilon}f(x')
\end{eqnarray}
exists for a.e. $x'\in{\mathbb{R}}^{n-1}$ and the induced operator
\begin{eqnarray}\label{maKP}
T:L^p({\mathbb{R}}^{n-1})\longrightarrow L^p({\mathbb{R}}^{n-1}),
\quad p\in(1,\infty),
\end{eqnarray}
is well-defined, linear and bounded. In addition, the following Cotlar inequality holds:
\begin{eqnarray}\label{T-Har.YFDS}
{\mathcal{N}}({\mathcal{T}}f)(x')\leq C{\mathcal{M}}(T_\ast f)(x')+C{\mathcal{M}}f(x'),
\qquad\forall\,x'\in{\mathbb{R}}^{n-1}.
\end{eqnarray}
In particular, for each $p\in(1,\infty)$ there exists a finite constant
$C=C(n,p)>0$ such that
\begin{eqnarray}\label{T-Har}
\|{\mathcal{N}}({\mathcal{T}}f)\|_{L^p({\mathbb{R}}^{n-1})}
\leq C\|K|_{S^{n-1}}\|_{{\mathscr{C}}^N(S^{n-1})}\|f\|_{L^p({\mathbb{R}}^{n-1})}.
\end{eqnarray}
Finally, whenever $f\in L^p({\mathbb{R}}^{n-1})$ with $1\leq p<\infty$, the jump-formula
\begin{eqnarray}\label{main-jump}
\Big({\mathcal{T}}f\big|^{{}^{\rm n.t.}}_{\partial{\mathbb{R}}^n_{+}}\Big)(x')
=\tfrac{1}{2i}\,\widehat{K}(-{\bf e}_n)f(x')+Tf(x')
\end{eqnarray}
is valid at a.e. $x'\in{\mathbb{R}}^{n-1}$.
\end{theorem}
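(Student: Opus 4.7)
The strategy is to reduce the maximal and $L^p$ assertions to the classical theory of odd, homogeneous Calder\'on--Zygmund kernels on $\mathbb{R}^{n-1}$, and then to extract the nontangential bounds \eqref{T-Har.YFDS}--\eqref{T-Har} and the jump formula \eqref{main-jump} by coupling that boundary theory with the homogeneity and smoothness of $K$ on the closed half-space. Set $k(x'):=K(x',0)$ for $x'\in\mathbb{R}^{n-1}\setminus\{0\}$; by \eqref{ker}, $k$ is odd, positive homogeneous of degree $-(n-1)$, and of class $\mathscr{C}^N$ off the origin, so the mean value condition $\int_{S^{n-2}}k\,d\sigma=0$ is automatic. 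Choosing $N=N(n)$ large enough, the classical theorem for odd homogeneous singular integrals yields the maximal truncation bound \eqref{Tmax-bdd}, the almost everywhere existence of the principal value \eqref{main-lim} for every $L^p$ datum, and the boundedness \eqref{maKP}, with constants controlled by $\|K|_{S^{n-1}}\|_{\mathscr{C}^N(S^{n-1})}$.

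For the nontangential bound, fix $x'\in\mathbb{R}^{n-1}$ and $(y',t)\in\Gamma_\kappa(x')$; with $\varepsilon:=2(1+\kappa)t$ decompose
\begin{align*}
\mathcal{T}f(y',t)
&=\int_{|x'-z'|>\varepsilon}\bigl[K(y'-z',t)-K(x'-z',0)\bigr]f(z')\,dz'\\
&\quad+T_\varepsilon f(x')+\int_{|x'-z'|\le\varepsilon}K(y'-z',t)f(z')\,dz'.
\end{align*}
A mean value estimate based on the $\mathscr{C}^N$-regularity of $K$ together with $|(y',t)-(x',0)|\approx t\ll|x'-z'|$, followed by a dyadic annular decomposition, bounds the first summand by $C\mathcal{M}f(x')$; the homogeneity estimate $|K(y'-z',t)|\le Ct^{-(n-1)}$ does the same for the third; and $|T_\varepsilon f(x')|\le T_*f(x')$ handles the middle piece. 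Taking the supremum over $(y',t)\in\Gamma_\kappa(x')$ produces the pointwise estimate \eqref{T-Har.YFDS}, and composing with \eqref{Tmax-bdd} and the $L^p$ boundedness of $\mathcal{M}$ yields \eqref{T-Har}.

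For the jump formula, the change of variable $z'=y'-tw'$ and the homogeneity of $K$ recast
\[
\mathcal{T}f(y',t)=\int_{\mathbb{R}^{n-1}}K(w',1)\,f(y'-tw')\,dw'.
\]
Split $K(w',1)=\phi(w')+K(w',0)\,\mathbf{1}_{|w'|>1}(w')$; using the smoothness of $K$ at scale $|w'|$ one finds $|K(w',1)-K(w',0)|\le C|w'|^{-n}$ for $|w'|>1$, hence $|\phi(w')|\le C(1+|w'|)^{-n}$ and $\phi\in L^1(\mathbb{R}^{n-1})$. The $t$-dilates of $\phi$ then form an approximation of the identity producing $c\,f(x')$ at every Lebesgue point of $f$, with $c:=\int_{\mathbb{R}^{n-1}}\phi(w')\,dw'$. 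The remaining convolution, after reverting to the variable $z'$, is a truncated singular integral of $f$ about $x'$ at scale $t$, and its limit is $Tf(x')$ via dominated convergence, anchored on the a.e.\ existence of principal values from Step~1 and the uniform control $T_*f\in L^p$. The identification $c=\tfrac{1}{2i}\widehat{K}(-\mathbf{e}_n)$ is obtained by computing $\widehat{\phi}(0)$: since $\widehat{K}$ is an odd, homogeneous-of-degree-$(-1)$ distribution which is smooth away from the origin, a direct Fourier calculation in the spirit of \cite{HoMiTa10} matches $\int\phi\,dw'$ with $\tfrac{1}{2i}\widehat{K}(-\mathbf{e}_n)$.

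The genuinely delicate step is the jump formula: one must simultaneously manage the non-integrability of $w'\mapsto K(w',1)$ at infinity (which is why convolution against $f$ can only be interpreted after the splitting), verify a.e.\ convergence in each piece (leaning on the maximal bounds just established), and evaluate the jump constant in Fourier terms. By contrast, once the odd/homogeneous Calder\'on--Zygmund framework is invoked, the assertions \eqref{Tmax-bdd}, \eqref{maKP}, \eqref{T-Har.YFDS} and \eqref{T-Har} come out along standard lines.
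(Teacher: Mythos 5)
You should first be aware that the paper does not actually prove Theorem~\ref{Main-T2}: it is invoked as a known consequence of classical Calder\'on--Zygmund theory, with the existence/boundedness/jump statements attributed to \cite{HoMiTa10} and \cite[Corollary~4.78]{DM}. So there is no in-paper argument to compare against; what you propose is essentially the standard proof that those references encapsulate, and its architecture is sound: reduce \eqref{Tmax-bdd}, \eqref{main-lim}, \eqref{maKP} to the theory of odd, degree $-(n-1)$ homogeneous kernels on ${\mathbb{R}}^{n-1}$ (oddness makes the cancellation automatic, and the ${\mathscr{C}}^N(S^{n-1})$ norm controls the size and H\"ormander-type constants); obtain the Cotlar bound \eqref{T-Har.YFDS} by the three-piece splitting at scale $\varepsilon\approx t$ (your pointwise bound ${\mathcal{N}}({\mathcal{T}}f)\le C{\mathcal{M}}f+T_\ast f$ is in fact slightly stronger a.e.); and derive \eqref{main-jump} by writing ${\mathcal{T}}f(y',t)=\int K(w',1)f(y'-tw')\,dw'$, splitting off $\phi(w')=K(w',1)-K(w',0){\mathbf 1}_{|w'|>1}$, and treating $\phi_t\ast f$ as an approximate identity with integrable radial majorant.

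Two points in the jump-formula step are stated more casually than they can be executed. First, after reverting variables the remaining piece is $T_tf(y')$, the truncation centered at the \emph{moving} point $y'$ (not at $x'$), and its a.e.\ convergence to $Tf(x')$ is not a matter of dominated convergence: the correct mechanism is the usual density argument — the statement holds for $f\in{\mathscr{C}}^\infty_0$, and the relevant oscillation operator is controlled pointwise by $C{\mathcal{M}}(f-g)+CT_\ast(f-g)+|T(f-g)|$, whose weak-type bounds (this is where \eqref{Tmax-bdd} and the a.e.\ existence of the principal value really enter) kill the exceptional set. You name the right ingredients, but the justification as written would not stand. Second, the identification $c:=\int\phi=\tfrac{1}{2i}\widehat{K}(-{\bf e}_n)$ is asserted rather than proved; it is true, and a clean route is available: oddness of $K(\cdot,0)$ on ${\mathbb{R}}^{n-1}$ gives $\int\phi=\lim_{R\to\infty}\int_{|w'|<R}K(w',1)\,dw'$, and then homogeneity plus oddness of $K$ show that the partial Fourier transform of $K$ at $\xi'=0'$ equals $c\,{\rm sgn}(t)$ in $t$, whence $\widehat{K}(0',\xi_n)=-2ic/\xi_n$ and $c=\tfrac{1}{2i}\widehat{K}(-{\bf e}_n)$ (this checks out on the Poisson and conjugate Poisson kernels). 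With these two repairs your outline is a complete and standard proof of the quoted theorem.
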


Recall next that, having fixed some $\kappa>0$, the area-function ${\mathcal{A}}_{\kappa}$
acts on a measurable function $u$ defined in ${\mathbb{R}}^n_{+}$ according to
\begin{eqnarray}\label{ga.12-hat}
({\mathcal{A}}_{\kappa}u)(x'):=\left(\int_{\Gamma_{\kappa}(x')}
|u(y',t)|^2\,\frac{dy'\,dt}{t^{n-2}}\right)^{1/2},
\qquad\forall\,x'\in{\mathbb{R}}^{n-1}.
\end{eqnarray}
The following square function estimate is a particular case of more general results of this nature
proved in \cite{HMMM} (for related work which is relevant to the present aims see also \cite{LNM},
\cite{M95}).

\begin{theorem}\label{prop:SFE}
Assume that $\theta:\mathbb{R}^{n}_+\times\mathbb{R}^{n-1}\rightarrow\mathbb{C}$
is a measurable function such that there exists a finite constant $c>0$ with the
property that, for all $(x',t)\in\mathbb{R}^{n}_{+}$ and $y'\in\mathbb{R}^{n-1}$,
\begin{equation}\label{SFE-est-theta}
|\theta(x',t;y')|+|(x'-y',t)|\,|\nabla_{y'}\theta(x',t;y')|
\leq\frac{c}{|(x'-y',t)|^n},
\end{equation}
and such that, for each $(x',t)\in\mathbb{R}^{n}_{+}$,
\begin{equation}\label{SFE-vanish-theta}
\int_{\mathbb{R}^{n-1}}\theta(x',t;y')\,dy'=0.
\end{equation}
Consider the integral operator
\begin{equation}\label{defi:Theta}
(\Theta f)(x',t):=\int_{\mathbb{R}^{n-1}}\theta(x',t;y')\,f(y')dy',
\qquad\forall\,(x',t)\in\mathbb{R}^{n}_{+}.
\end{equation}
Also, fix $\kappa>0$ and recall the area function from \eqref{ga.12-hat}.

Then for every $p\in(1,\infty)$ there exists a finite constant
$C=C(n,\kappa,p,c)>0$ with the property that
\begin{equation}\label{uagv.SS}
\|{\mathcal{A}}_\kappa(\Theta f)\|_{L^p(\mathbb{R}^{n-1})}\leq C\|f\|_{L^p(\mathbb{R}^{n-1})},
\qquad\forall\,f\in L^p(\mathbb{R}^{n-1}).
\end{equation}
\end{theorem}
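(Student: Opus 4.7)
\smallskip

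\textbf{Proof proposal.} The plan is to follow the classical two-stage Littlewood--Paley scheme: establish the $L^2$-bound by almost-orthogonality, then extrapolate to all $L^p$ with $p\in(1,\infty)$ via Calder\'on--Zygmund theory.

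First, for the $L^2$ estimate, the key reduction is Fubini--Tonelli: since the set $\{x'\in{\mathbb{R}}^{n-1}:(y',t)\in\Gamma_\kappa(x')\}$ is a ball of radius $\kappa t$ in ${\mathbb{R}}^{n-1}$, one has
\begin{equation*}
\|{\mathcal{A}}_\kappa(\Theta f)\|_{L^2(\mathbb{R}^{n-1})}^2
\;\approx\;\int_{\mathbb{R}^n_+}|\Theta f(y',t)|^2\,t\,dy'\,dt,
\end{equation*}
with proportionality constants depending only on $n$ and $\kappa$. For each fixed $t>0$, write $\Theta_t:L^2({\mathbb{R}}^{n-1})\to L^2({\mathbb{R}}^{n-1})$ for the operator $(\Theta_t f)(x'):=(\Theta f)(x',t)$. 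The bound \eqref{SFE-est-theta} together with Schur's test yields $\|\Theta_t\|_{L^2\to L^2}\lesssim t^{-1/2}$, and the point is to upgrade this to the off-diagonal decay
\begin{equation*}
\|\Theta_s^{\ast}\Theta_t\|_{L^2\to L^2}+\|\Theta_s\Theta_t^{\ast}\|_{L^2\to L^2}
\;\lesssim\;\frac{1}{\max(s,t)}\Big(\frac{\min(s,t)}{\max(s,t)}\Big)^{\alpha}
\end{equation*}
for some $\alpha>0$. The Cotlar--Stein lemma (applied, say, to the dyadic family $\{\sqrt{2^k}\,\Theta_{2^k}\}_{k\in{\mathbb{Z}}}$ and combined with a routine discretization argument) then delivers $\int_0^\infty\|\Theta_t f\|_{L^2}^2\,t\,dt\lesssim\|f\|_{L^2}^2$.

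The crux is the almost-orthogonality estimate: here one uses the vanishing moment \eqref{SFE-vanish-theta} on the ``larger'' factor and the pointwise Lipschitz bound \eqref{SFE-est-theta} on the ``smaller'' one. Explicitly, for $s\le t$ one writes the kernel of $\Theta_s\Theta_t^{\ast}$ as
\begin{equation*}
K_{s,t}(x',y')=\int_{\mathbb{R}^{n-1}}\theta(x',s;z')\,\overline{\theta(y',t;z')}\,dz',
\end{equation*}
replaces $\overline{\theta(y',t;z')}$ by $\overline{\theta(y',t;z')}-\overline{\theta(y',t;x')}$ using \eqref{SFE-vanish-theta} in $z'$, and controls the result by $\tfrac{c^2}{t}\int|z'-x'|\,\big|(x'-z',s)\big|^{-n}\,\big|(y'-z',t)\big|^{-n-1}\,dz'$; then Schur's lemma produces the claimed decay. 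The analogous computation handles $\Theta_s^{\ast}\Theta_t$. This is the step I expect to be the most technical, since the absence of a convolution structure forces the cancellation to be extracted from $\theta(x',t;y')$ through the $y'$-variable only.

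Finally, for the $L^p$-extrapolation with $p\in(1,\infty)$, view ${\mathcal{A}}_\kappa\circ\Theta$ as a sublinear operator from $L^p({\mathbb{R}}^{n-1})$ into itself. Introducing the Banach-space-valued kernel $y'\mapsto \big\{(x',t)\mapsto t^{-(n-2)/2}\chi_{\Gamma_\kappa(x')}(x',t)\,\theta(x',t;y')\big\}\in L^2(dx'\,dt)$, one verifies the standard size and H\"ormander regularity conditions directly from \eqref{SFE-est-theta} (the integrations in $t$ are of Calder\'on--Zygmund type and produce $|x'-y'|^{-(n-1)}$-scaling in the appropriate sense). Together with the $L^2$-bound from the previous step, the classical Calder\'on--Zygmund machinery (weak-$(1,1)$ plus Marcinkiewicz interpolation for $p\in(1,2]$, and duality or the Fefferman--Stein vector-valued inequality for $p\in[2,\infty)$) yields \eqref{uagv.SS} with the asserted dependence of constants.
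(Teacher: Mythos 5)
First, a point of reference: the paper does not prove this statement at all --- it is quoted as a special case of the general square-function estimates of \cite{HMMM} --- so your attempt has to stand on its own, and as written it has two genuine gaps.

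The first is in the $L^2$ step. Your key computation, bounding the kernel of $\Theta_s\Theta_t^{\ast}$ by using \eqref{SFE-vanish-theta} on one factor and the $\nabla_{y'}$ bound from \eqref{SFE-est-theta} on the other, is sound in principle, because in that composition the integrated variable $z'$ sits in the \emph{third} slot of $\theta$, where both the cancellation and the Lipschitz control live. But the claim that ``the analogous computation handles $\Theta_s^{\ast}\Theta_t$'' is false: there the integrated variable occupies the \emph{first} slot of $\theta$, in which no smoothness and no vanishing moment are assumed, and in general such compositions have no off-diagonal decay (take $\theta(x',t;y')=a(x')\psi_t(x'-y')$ with $a\in L^\infty$ rough and $\psi$ smooth with mean zero). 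Since Cotlar--Stein needs decay of both $T_jT_k^{\ast}$ and $T_j^{\ast}T_k$, the scheme you describe cannot be run; moreover Cotlar--Stein bounds a sum, not $\sum_k\|T_kf\|_{L^2}^2$, the ``routine discretization'' comparing $\Theta_t$ for $t\in[2^k,2^{k+1}]$ with $\Theta_{2^k}$ is unjustified (no continuity in $t$ is assumed), and your normalizations are off: Schur gives $\|\Theta_t\|_{L^2\to L^2}\lesssim t^{-1}$, not $t^{-1/2}$, the natural family is $t\Theta_t$ (so dyadic weights $2^k$, not $\sqrt{2^k}$), and the asserted bound $\max(s,t)^{-1}(\min/\max)^{\alpha}$ is dimensionally inconsistent with $\|\Theta_s\Theta_t^{\ast}\|\lesssim (st)^{-1}$. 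The fix is to avoid Cotlar--Stein altogether: set $Gf(\cdot,t):=t\Theta_t f$, note $\|{\mathcal{A}}_\kappa(\Theta f)\|_{L^2}^2\approx\|Gf\|^2_{L^2({\mathbb R}^n_+,dy'dt/t)}$ and $\|G\|^2=\|GG^{\ast}\|$, and observe that $GG^{\ast}$ only involves the family $t\Theta_t\,(s\Theta_s)^{\ast}$; your one good estimate, $\|t\Theta_t(s\Theta_s)^{\ast}\|\lesssim\min(s/t,t/s)^{\alpha}$, then closes the $L^2$ bound by Schur's test in the $(s,t)$ variables (alternatively, use a Calder\'on reproducing formula and estimate $\|t\Theta_tQ_s\|$).

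The second gap is the range $p\in(2,\infty)$. The vector-valued Calder\'on--Zygmund argument you outline uses the H\"ormander condition in the $y'$ variable, which is all that \eqref{SFE-est-theta} provides; this yields weak $(1,1)$ and hence $1<p\le 2$, but ``duality'' for $p>2$ would require regularity of $\theta$ in the $x'$ variable, which is not assumed, and the Fefferman--Stein inequality does not substitute for it. A correct completion is along tent-space lines: the $L^2$ bound plus the size estimate in \eqref{SFE-est-theta} give the Carleson-measure estimate $\sup_B|B|^{-1}\iint_{\widehat B}|\Theta f(y',t)|^2\,t\,dy'\,dt\lesssim\|f\|_{L^\infty}^2$ (split $f$ into $f{\bf 1}_{2B}$ and the rest), i.e.\ $\Theta:L^\infty\to T^\infty_2$, and interpolation of tent spaces against $\Theta:L^2\to T^2_2$ gives $p>2$; equivalently one may dualize $\int{\mathcal{A}}_\kappa(\Theta f)^2g\lesssim\iint|\Theta f|^2t\,({\mathcal M}g)\,dy'dt$ and run a good-$\lambda$/Carleson embedding argument. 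So the overall two-stage plan is reasonable, but both the mechanism producing the $L^2$ bound and the mechanism producing $p>2$ need to be replaced as indicated.
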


Later, we shall need a suitable version of the divergence theorem,
recently obtained in \cite{Div-MMM}. To state it, for each
$k\in{\mathbb{N}}$ denote by ${\mathscr{L}}^{k}$ the $k$-dimensional
Lebesgue measure in ${\mathbb{R}}^k$.

\begin{theorem}[\cite{Div-MMM}]\label{theor:div-thm}
Assume that $\vec{F}\in L^1_{\rm loc}({\mathbb{R}}^n_{+},\mathbb{C}^n)$ is a vector
field satisfying the following conditions {\rm (}for some fixed $\kappa>0$,
and with the divergence taken in the sense of distributions{\rm )}:
\begin{equation}\label{ytead}
{\rm div}\,\vec{F}\in L^1({\mathbb{R}}^n_{+}),\quad
{\mathcal{N}}_\kappa\vec{F}\in L^1({\mathbb{R}}^{n-1}),\quad
\mbox{ there exists }\,\,\vec{F}\big|_{\partial{\mathbb{R}}^n_{+}}^{{}^{\rm n.t.}}
\,\,\mbox{ a.e.~in }\,\,{\mathbb{R}}^{n-1}.
\end{equation}
Then
\begin{equation}\label{eqn:div-form}
\int_{{\mathbb{R}}^n_{+}}{\rm div}\,\vec{F}\,d{\mathscr{L}}^{n}
=-\int_{{\mathbb{R}}^{n-1}}{\bf e}_n\cdot
\bigl(\vec F\,\big|^{{}^{\rm n.t.}}_{\partial{\mathbb{R}}^n_{+}}\bigr)\,d{\mathscr{L}}^{n-1}.
\end{equation}
\end{theorem}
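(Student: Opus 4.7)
The plan is to reduce the statement to the classical compactly supported divergence theorem via smooth cutoffs, then remove the cutoffs by systematically using $\mathcal{N}_\kappa\vec{F}\in L^1$ as the principal domination tool. I introduce three families of smooth cutoffs: horizontally, $\psi_R(x'):=\psi(x'/R)$ for a fixed $\psi\in C^\infty_c(\mathbb{R}^{n-1})$ identically one on $B(0,1)$ and supported in $B(0,2)$; a lower vertical cutoff $\eta^\downarrow_\varepsilon\in C^\infty[0,\infty)$ equal to zero on $[0,\varepsilon]$ and one past $2\varepsilon$, with $|(\eta^\downarrow_\varepsilon)'|\lesssim 1/\varepsilon$; and an upper vertical cutoff $\chi^\uparrow_S\in C^\infty[0,\infty)$ equal to one on $[0,S]$ and zero past $2S$, with $|(\chi^\uparrow_S)'|\lesssim 1/S$. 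The product $\psi_R\chi^\uparrow_S\eta^\downarrow_\varepsilon\vec F$, extended by zero across $\partial\mathbb{R}^n_+$, is a compactly supported $L^1$ vector field on $\mathbb{R}^n$ whose distributional divergence is $L^1$ and integrates to zero, yielding
\[
\int_{\mathbb{R}^n_+}\!\psi_R\chi^\uparrow_S\eta^\downarrow_\varepsilon\,{\rm div}\vec F\,d\mathscr L^n=-\!\int_{\mathbb{R}^n_+}\!\chi^\uparrow_S\eta^\downarrow_\varepsilon(\nabla\psi_R)\!\cdot\!\vec F'\,d\mathscr L^n-\!\int_{\mathbb{R}^n_+}\!\psi_R\bigl[(\chi^\uparrow_S)'\eta^\downarrow_\varepsilon+\chi^\uparrow_S(\eta^\downarrow_\varepsilon)'\bigr]F_n\,d\mathscr L^n,
\]
where $\vec F'=(F_1,\ldots,F_{n-1})$.

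The limits are taken in the order $S\to\infty$, then $R\to\infty$, then $\varepsilon\to 0$. The key analytic input throughout is the pointwise bound
\[
|\vec F(x',t)|\le\frac{C_n}{t^{n-1}}\int_{B_{\mathbb{R}^{n-1}}(x',\kappa t)}\mathcal N_\kappa\vec F(y')\,dy',
\]
obtained by averaging the trivial inequality $|\vec F(x',t)|\le\mathcal N_\kappa\vec F(y')$ (valid whenever $|x'-y'|<\kappa t$, since then $(x',t)\in\Gamma_\kappa(y')$) over $y'\in B(x',\kappa t)$. Combined with Fubini, this gives the key estimate $\int_{|x'|\le 2R}|\vec F(x',t)|\,dx'\le C\min\bigl(1,(R/t)^{n-1}\bigr)\|\mathcal N_\kappa\vec F\|_{L^1}$. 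For $S\to\infty$ with $R,\varepsilon$ fixed, the term involving $(\chi^\uparrow_S)'$ is supported on $t\in[S,2S]$, and the previous estimate (active in the regime $t\ge R$) bounds this contribution by $C(R/S)^{n-1}\|\mathcal N_\kappa\vec F\|_{L^1}$, which vanishes as $S\to\infty$; the other terms pass by dominated convergence with either $|{\rm div}\vec F|\in L^1(\mathbb{R}^n_+)$ or $\mathcal N_\kappa\vec F\in L^1(\mathbb{R}^{n-1})$ as majorants. For $R\to\infty$, the gradient term is supported on $R\le|x'|\le 2R$ with $|\nabla\psi_R|\le C/R$; splitting the $t$-integration at $t\sim R$ and using the bound $|\vec F'|\le\mathcal N_\kappa\vec F(\cdot)$ for $t\le R$ together with the refined $t$-decay bound for $t\ge R$, this integral is controlled by $R^{-1}\int_{|x'|\ge R/2}\mathcal N_\kappa\vec F\,dx'$ plus a vanishing tail, both tending to zero by absolute continuity of the $L^1$ integral. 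The remaining terms pass by dominated convergence.

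Finally, for $\varepsilon\to 0$, the bulk converges to $\int_{\mathbb{R}^n_+}{\rm div}\vec F$ by dominated convergence with majorant $|{\rm div}\vec F|$. The $(\eta^\downarrow_\varepsilon)'$ term equals $\int_{\mathbb{R}^{n-1}}\int_0^\infty(\eta^\downarrow_\varepsilon)'(t)F_n(x',t)\,dt\,dx'$; since $(\eta^\downarrow_\varepsilon)'(t)\,dt$ is a probability measure supported on $[\varepsilon,2\varepsilon]$ concentrating at $0^+$, and vertical approach to $\partial\mathbb{R}^n_+$ is a special case of nontangential approach from $\Gamma_\kappa(x')$, for a.e.~$x'$ the inner integral converges to $({\bf e}_n\cdot\vec F|^{\rm n.t.}_{\partial\mathbb{R}^n_+})(x')$. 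Outer dominated convergence with $\mathcal N_\kappa\vec F\in L^1(\mathbb{R}^{n-1})$ as majorant produces the desired boundary integral, with the minus sign as demanded by \eqref{eqn:div-form}.

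The main obstacle I anticipate is the passage $S\to\infty$: only the refined averaged majorization with its genuine $t^{-(n-1)}$ decay yields a vanishing (not merely bounded) contribution from the horizon at infinity in the $t$-direction; the naive pointwise bound $|F_n|\le\mathcal N_\kappa\vec F$ would give a uniform-in-$S$ but nonvanishing expression. All other steps are careful but essentially standard applications of dominated convergence, once the geometric decay of $\vec F$ encoded by the nontangential maximal function has been extracted via this averaging argument.
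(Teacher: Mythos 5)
This theorem is quoted in the paper from \cite{Div-MMM} without proof, so there is nothing in-house to compare against; judging your argument on its own terms, the overall scheme (smooth truncations, the distributional Leibniz rule, and removal of the cutoffs by dominating everything through ${\mathcal{N}}_\kappa\vec F\in L^1$, with the vertical limit recovered as a special case of nontangential convergence) is reasonable, and your $\varepsilon\to 0$ and $S\to\infty$ boundary-at-infinity estimates are fine. The genuine gap is in the lateral far field, i.e.\ the $\nabla\psi_R$ term in the regime $t\gtrsim R$. Your ``refined'' bound reads $\int_{|x'|\le 2R}|\vec F(x',t)|\,dx'\le C\min\bigl(1,(R/t)^{n-1}\bigr)\|{\mathcal{N}}_\kappa\vec F\|_{L^1}$, and it carries the \emph{full} $L^1$ norm of ${\mathcal{N}}_\kappa\vec F$; feeding it into the tail gives
$\frac{C}{R}\int_R^\infty (R/t)^{n-1}\,dt\,\|{\mathcal{N}}_\kappa\vec F\|_{L^1}=\frac{C}{n-2}\,\|{\mathcal{N}}_\kappa\vec F\|_{L^1}$ for $n\ge 3$, a bounded but \emph{non-vanishing} quantity, and a divergent integral when $n=2$. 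So the claimed ``vanishing tail'' as $R\to\infty$ is not established by the tools you cite, and for $n=2$ even the integrable majorant needed for your dominated-convergence passage $S\to\infty$ in this term is missing (the pointwise bound $|\vec F(x',t)|\le{\mathcal{N}}_\kappa\vec F(x')$ is not integrable in $t$ over $(\varepsilon,\infty)$ either). Since controlling the contribution from far away with only ${\mathcal{N}}_\kappa\vec F\in L^1$ is precisely the delicate point of this sharp divergence theorem, this is a real gap, not a routine omission.

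A workable repair inside your framework is to reverse the first two limits and add one genuinely new estimate. With $S$ and $\varepsilon$ fixed, the lateral term is killed trivially by $|\vec F(x',t)|\le{\mathcal{N}}_\kappa\vec F(x')$: it is at most $\frac{CS}{R}\int_{|x'|\ge R}{\mathcal{N}}_\kappa\vec F\,dx'\to 0$ as $R\to\infty$. Then for $S\to\infty$ you need $\frac1S\int_S^{2S}\int_{{\mathbb{R}}^{n-1}}|F_n(x',t)|\,dx'\,dt\to 0$, which your averaged bound only shows to be bounded; it does follow, however, from the observation that $|\vec F(x',t)|\le\operatorname{essinf}_{B(x',\kappa t)}{\mathcal{N}}_\kappa\vec F$, together with the two facts that $\operatorname{essinf}_{B(x',r)}{\mathcal{N}}_\kappa\vec F\le C r^{1-n}\|{\mathcal{N}}_\kappa\vec F\|_{L^1}\to 0$ as $r\to\infty$ and $\operatorname{essinf}_{B(x',r)}{\mathcal{N}}_\kappa\vec F\le{\mathcal{N}}_\kappa\vec F(x')$ for a.e.\ $x'$; dominated convergence then yields $\int_{{\mathbb{R}}^{n-1}}|\vec F(x',t)|\,dx'\to 0$ as $t\to\infty$, which is exactly the decay your horizon term needs. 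Without this (or an equivalent) ingredient, your proof as written does not close.
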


We shall now record the following versatile version of interior estimates for
higher-order elliptic systems. A proof may be found in \cite[Theorem~11.9, p.\,364]{DM}.

\begin{theorem}\label{ker-sbav}
Fix $n,m,M\in\mathbb{N}$ with $n\geq 2$, and consider an $M\times M$
system of homogeneous differential operators of order $2m$,
${\mathfrak{L}}:=\sum_{|\alpha|=2m}{A}_{\alpha}\partial^\alpha$,
with matrix coefficients ${A}_{\alpha}\in{\mathbb{C}}^{M\times M}$,
satisfying the weak ellipticity condition \eqref{LH.w}.
Then for each null-solution $u$ of ${\mathfrak{L}}$ in a ball $B(x,R)$
{\rm (}where $x\in{\mathbb{R}}^n$ and $R>0${\rm )}, $0<p<\infty$, $\lambda\in(0,1)$,
$\ell\in{\mathbb{N}}_0$, and $0<r<R$, one has
\begin{equation}\label{detraz}
\sup_{z\in B(x,\lambda r)}|\nabla^\ell u(z)|
\leq\frac{C}{r^\ell}\left(\aver{B(x,r)}|u|^p\,d{\mathscr{L}}^n\right)^{1/p},
\end{equation}
where $C=C(L,p,\ell,\lambda,n)>0$ is a finite constant, and
$\nabla^\ell u$ denotes the vector with the components
$(\partial^\alpha u)_{|\alpha|=\ell}$.
\end{theorem}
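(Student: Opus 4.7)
The plan proceeds in three stages—first for $p=1$ via a fundamental-solution representation, then for $p\geq 1$ by Hölder, and finally for $0<p<1$ by a self-improvement iteration—after a scaling reduction. By translating and setting $v(z):=u(x+rz)$, homogeneity of $\mathfrak{L}$ gives $\mathfrak{L}v=0$ on $B(0,R/r)$ and $\aver{B(0,1)}|v|^p=\aver{B(x,r)}|u|^p$; since $\nabla^\ell v(z)=r^\ell(\nabla^\ell u)(x+rz)$, it suffices to prove
\[
\sup_{B(0,\lambda)}|\nabla^\ell v|\leq C\Big(\aver{B(0,1)}|v|^p\Big)^{1/p}
\]
for null-solutions $v$ of $\mathfrak{L}$ on a ball $B(0,R')$ with $R'>1$.

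For the case $p=1$, let $E$ be the fundamental solution from Theorem~\ref{FS-prop}, and fix $\varphi\in\mathscr{C}^\infty_c(B(0,1))$ with $\varphi\equiv 1$ on $B(0,(1+\lambda)/2)$. Since $\varphi v$ is compactly supported and $\mathfrak{L}v=0$, one has $\varphi v=E\ast\mathfrak{L}(\varphi v)$; the Leibniz rule shows $\mathfrak{L}(\varphi v)$ is a linear combination of terms $(\partial^\beta\varphi)(\partial^{\alpha-\beta}v)$ with $|\alpha|=2m$, $|\beta|\geq 1$, supported in the annulus $A:=B(0,1)\setminus B(0,(1+\lambda)/2)$. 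For $z\in B(0,\lambda)$, $\varphi(z)=1$, so $v(z)=\int_A E(z-w)\,\mathfrak{L}(\varphi v)(w)\,dw$. Differentiating in $z$ and integrating by parts in $w$ to move all derivatives off $v$ (legal since $\partial^\beta\varphi$ is compactly supported in the open annulus) yields
\[
\nabla^\ell v(z)=\sum c_{\mu\nu}\int_A (\partial^\mu E)(z-w)(\partial^\nu\varphi)(w)\,v(w)\,dw,
\]
with $|\mu|+|\nu|=2m+\ell$ and $|\nu|\geq 1$. For $z\in B(0,\lambda)$ and $w\in A$, $|z-w|$ remains in the compact interval $[(1-\lambda)/2,2]$; Theorem~\ref{FS-prop}(1),(4),(5) then bound $|\partial^\mu E(z-w)|$ by a constant depending only on $L$, $\lambda$, $\ell$, $n$, which together with $|\partial^\nu\varphi|\leq C$ gives $\sup_{B(0,\lambda)}|\nabla^\ell v|\leq C\int_{B(0,1)}|v|$. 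Hölder's inequality upgrades this to the $p\geq 1$ case at once.

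For $0<p<1$, use a standard self-improvement. The $p=1$, $\ell=0$ estimate, applied on concentric balls and rescaled, gives, for all $0<\tau<\rho<R'$,
\[
\Psi(\tau):=\sup_{B(0,\tau)}|v|\leq\frac{C}{(\rho-\tau)^n}\int_{B(0,\rho)}|v|\leq\frac{C\,\Psi(\rho)^{1-p}}{(\rho-\tau)^n}\int_{B(0,R')}|v|^p
\]
after splitting $|v|=|v|^p|v|^{1-p}$; Young's inequality with exponents $1/(1-p)$, $1/p$ yields
\[
\Psi(\tau)\leq\tfrac{1}{2}\Psi(\rho)+\frac{C}{(\rho-\tau)^{n/p}}\Big(\int_{B(0,R')}|v|^p\Big)^{1/p}.
\]
A standard iteration lemma (if $f(\tau)\leq\theta f(\rho)+A(\rho-\tau)^{-\alpha}$ with $\theta<1$, then $f(r_0)\leq CA(r_1-r_0)^{-\alpha}$) produces $\sup_{B(0,\lambda')}|v|\leq C(\lambda',p)(\int_{B(0,1)}|v|^p)^{1/p}$ for any $\lambda'\in(\lambda,1)$. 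Finally, applying the $p=1$ derivative bound on the ball $B(0,\lambda')$ (which contains $B(0,\lambda)$) converts this $L^\infty$ control on $|v|$ into the required $L^\infty$ control on $|\nabla^\ell v|$.

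The main obstacle is the iteration step for $0<p<1$: it requires the correct Young/absorption setup and careful tracking of the constant in the $\lambda$-dependence. By contrast, the representation formula for $p=1$ is direct once the scaling reduction is performed, which is also what removes from concern the logarithmic factor in Theorem~\ref{FS-prop}(5) (after rescaling, all kernels are evaluated on a compact set bounded away from the origin, so any such factor is absorbed into the constant).
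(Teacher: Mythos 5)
Your proof is correct and follows the same standard route as the source the paper cites for this result ([DM, Theorem~11.9]; the paper itself gives no proof): a cutoff plus fundamental-solution representation for $p=1$, H\"older for $p\geq 1$, and the Young/absorption iteration on nested balls for $0<p<1$. The only point worth making explicit is that the identity $\varphi v=E\ast\mathfrak{L}(\varphi v)$ is the \emph{transposed} representation formula, so it rests on $(E^{\mathfrak{L}})^\top=E^{\mathfrak{L}^\top}$ from part {\it (7)} of Theorem~\ref{FS-prop} and not merely on the column identity \eqref{fs-GLOB}.
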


We next state the following Fatou type result from \cite{K-MMMM}.

\begin{theorem}\label{tuFatou}
Let $L$ be a system with complex coefficients as in
\eqref{L-def}-\eqref{L-ell.X}. Then for each $p\in(1,\infty)$, $\kappa>0$, and
$u\in{\mathscr{C}}^\infty({\mathbb{R}}^n_{+})$,
\begin{eqnarray}\label{Tafva}
\left.
\begin{array}{r}
Lu=0\,\mbox{ in }\,{\mathbb{R}}^n_{+}
\\[6pt]
{\mathcal{N}}_{\kappa}u\in L^p({\mathbb{R}}^{n-1})
\end{array}
\right\}
\Longrightarrow
\left\{
\begin{array}{l}
u\big|^{{}^{\rm n.t.}}_{\partial{\mathbb{R}}^n_{+}}\,\mbox{ exists a.e.~in }\,
{\mathbb{R}}^{n-1},\, \mbox{ belongs to $L^p({\mathbb{R}}^{n-1})$,}
\\[12pt]
\mbox{and }\,u(x',t)=\Big(P^L_t\ast\big(u\big|^{{}^{\rm n.t.}}_{\partial{\mathbb{R}}^n_{+}}\big)\Big)(x'),\,\,
\forall\,(x',t)\in{\mathbb{R}}^n_{+},
\end{array}
\right.
\end{eqnarray}
where $P^L$ is the Poisson kernel for $L$ in ${\mathbb{R}}^n_{+}$.
\end{theorem}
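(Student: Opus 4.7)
The plan is to prove Theorem~\ref{tuFatou} in three steps: (i) a weak-compactness argument produces a candidate boundary trace $f\in L^p(\mathbb{R}^{n-1})$ along some sequence $t_k\to 0^+$; (ii) a rigidity step, invoking the well-posedness of $(D_p)$ in Theorem~\ref{Theorem-NiceDP}, shows that $u(x',t_0+s)=(P^L_s\ast u(\cdot,t_0))(x')$ for every fixed $t_0,s>0$; and (iii) a limit passage combines these to identify $u(x',t)=(P^L_t\ast f)(x')$ throughout $\mathbb{R}^n_{+}$, after which part~(7) of Theorem~\ref{kkjbhV} recovers the nontangential trace.

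Since $(x',t)\in\Gamma_\kappa(x')$ for every $t>0$, the pointwise estimate $|u(x',t)|\leq(\mathcal{N}_\kappa u)(x')$ is immediate, and integration yields $\|u(\cdot,t)\|_{L^p(\mathbb{R}^{n-1})}\leq\|\mathcal{N}_\kappa u\|_{L^p(\mathbb{R}^{n-1})}$ uniformly in $t>0$. Reflexivity of $L^p(\mathbb{R}^{n-1})$ for $p\in(1,\infty)$ then furnishes, via Banach--Alaoglu, a sequence $t_k\to 0^+$ and some $f\in L^p(\mathbb{R}^{n-1})$ such that $u(\cdot,t_k)\rightharpoonup f$ weakly in $L^p$. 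For step (ii), fix $t_0>0$ and set $v(x',s):=u(x',t_0+s)$. Plainly $v\in\mathscr{C}^\infty(\mathbb{R}^n_{+})$ and $Lv=0$ there; moreover, whenever $|y'-x'|<\kappa s$ one has $|y'-x'|<\kappa(t_0+s)$, so $(y',t_0+s)\in\Gamma_\kappa(x')$, yielding $\mathcal{N}_\kappa v\leq\mathcal{N}_\kappa u\in L^p(\mathbb{R}^{n-1})$. The smoothness of $u$ across $\{x_n=t_0\}$ identifies $v|^{{}^{\rm n.t.}}_{\partial\mathbb{R}^n_{+}}$ with $u(\cdot,t_0)\in L^p(\mathbb{R}^{n-1})$, and the uniqueness part of Theorem~\ref{Theorem-NiceDP} forces
\begin{equation*}
u(x',t_0+s)\,=\,\bigl(P^L_s\ast u(\cdot,t_0)\bigr)(x'),\qquad (x',s)\in\mathbb{R}^n_{+},\ t_0>0.
\end{equation*}

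Applying this identity with $t_0=t_k$ yields $u(x',s+t_k)=(P^L_s\ast u(\cdot,t_k))(x')$ for every fixed $s>0$. The left-hand side tends to $u(x',s)$ by continuity of $u$ on $\mathbb{R}^n_{+}$. The decay \eqref{eq:IG6gy} gives $P^L_s(x'-\cdot)\in L^{p'}(\mathbb{R}^{n-1})$, so the weak convergence $u(\cdot,t_k)\rightharpoonup f$ passes to the limit in the right-hand side to produce $(P^L_s\ast f)(x')$. Thus $u(x',t)=(P^L_t\ast f)(x')$ throughout $\mathbb{R}^n_{+}$, and part~(7) of Theorem~\ref{kkjbhV} then gives $u|^{{}^{\rm n.t.}}_{\partial\mathbb{R}^n_{+}}=f$ at every Lebesgue point of $f$, hence a.e.~in $\mathbb{R}^{n-1}$, establishing \eqref{Tafva}.

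The principal obstacle I anticipate is step (ii): one must carefully verify that the shifted function $v$ meets every hypothesis of $(D_p)$ --- smoothness in $\mathbb{R}^n_{+}$, annihilation by $L$, nontangential maximal function in $L^p$, and a genuine $L^p$ nontangential boundary trace --- so that Theorem~\ref{Theorem-NiceDP} applies and uniqueness pins $v$ down as the Poisson convolution of $u(\cdot,t_0)$. With this semigroup-type identity in hand, the limit passage in step (iii) reduces to a short exercise in weak convergence that uses only the Poisson kernel decay \eqref{eq:IG6gy}.
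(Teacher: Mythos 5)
Your argument is correct. Note first that this paper does not actually prove Theorem~\ref{tuFatou}: it is imported verbatim from \cite{K-MMMM}, so there is no in-paper proof to compare against; what you have written is the natural self-contained derivation from the two results the paper does make available, namely the well-posedness of $(D_p)$ (Theorem~\ref{Theorem-NiceDP}) and the properties of the Poisson kernel (Theorem~\ref{kkjbhV}). Each of your three steps checks out: the bound $|u(x',t)|\leq(\mathcal{N}_\kappa u)(x')$ gives uniform $L^p$ control of the horizontal slices and hence a weakly convergent subsequence $u(\cdot,t_k)\rightharpoonup f$; the shifted function $v(\cdot,s)=u(\cdot,t_0+s)$ genuinely satisfies all four conditions of $(D_p)$ with datum $u(\cdot,t_0)$ (the cone inclusion $\Gamma_\kappa(x')+t_0\mathbf{e}_n\subset\Gamma_\kappa(x')$ gives $\mathcal{N}_\kappa v\leq\mathcal{N}_\kappa u$, and interior smoothness supplies the nontangential trace), so uniqueness forces $u(\cdot,t_0+s)=P^L_s\ast u(\cdot,t_0)$; and the passage $t_0=t_k\to0^+$ works because each row of $P^L_s(x'-\cdot)$ lies in $L^{p'}(\mathbb{R}^{n-1})$ by \eqref{eq:IG6gy}, so the weak convergence (applied componentwise to the $\mathbb{C}^M$-valued data against the matrix kernel) yields $u(\cdot,s)=P^L_s\ast f$. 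Two small points worth making explicit in a final write-up: to invoke part {\it (7)} of Theorem~\ref{kkjbhV} you should record that $f\in L^p(\mathbb{R}^{n-1})\subset L^1\big(\mathbb{R}^{n-1},\tfrac{dx'}{1+|x'|^n}\big)$, which follows from H\"older's inequality since $(1+|x'|^n)^{-1}\in L^{p'}(\mathbb{R}^{n-1})$; and the sequential extraction in step (i) uses the reflexivity of $L^p$ (equivalently, weak-$*$ sequential compactness of balls in the dual of the separable space $L^{p'}$), which is how your appeal to Banach--Alaoglu should be read. Neither is a gap.
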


Finally, it is possible to give sufficient conditions guaranteeing the validity of \eqref{eq:Asd.FFF}
formulated exclusively in terms of (the inverse of) the symbol of the system $L$.
Specifically, the strong ellipticity condition \eqref{L-ell.X} implies that
the symbol of $L$, given by ${\rm Symb}_L(\xi):=
-\Bigl(\xi_r\xi_s a^{\alpha\beta}_{rs}\Bigr)_{1\leq\alpha,\beta\leq M}$
for $\xi\in{\mathbb{R}}^n\setminus\{0\}$, satisfies the strict negativity condition
\begin{equation}\label{L-ell.Xfff}
{\rm Re}\,\Bigl\langle -\,{\rm Symb}_L(\xi)\eta,\eta\Big\rangle_{{\mathbb{C}}^M}
\geq\kappa_o|\xi|^2|\eta|^2\,\,\mbox{ for every }\,\,
\xi\in{\mathbb{R}}^n\,\,\mbox{ and }\,\,\eta\in{\mathbb{C}}^M.
\end{equation}
In particular, the inverse
\begin{equation}\label{yfg-yL4f}
\bigl(S_{\gamma\beta}(\xi)\bigr)_{1\leq\gamma,\beta\leq M}
:=\Bigl[{\rm Symb}_L(\xi)\Bigr]^{-1}\in{\mathbb{C}}^{\,M\times M}\,\,\,
\mbox{ exists for all }\,\,\xi\in{\mathbb{R}}^n\setminus\{0\}.
\end{equation}
We then have the following result.

\begin{proposition}[\cite{MaMiMi}]\label{UaalpIKL-PP}
Let $L$ be a strongly elliptic, second-order, homogeneous, $M\times M$ system,
with constant complex coefficients, and recall \eqref{yfg-yL4f}.
Assume that for each indices $s,s'\in\{1,...,n\}$
and $\alpha,\gamma\in\{1,...,M\}$ there holds
\begin{equation}\label{Ea4-fCii-n3}
\Bigl[a^{\beta\alpha}_{s's}-a^{\beta\alpha}_{ss'}
+\xi_r a^{\beta\alpha}_{rs}\partial_{\xi_{s'}}
-\xi_r a^{\beta\alpha}_{rs'}\partial_{\xi_{s}}\Bigr]
S_{\gamma\beta}(\xi)=0,\qquad\forall\,\xi\in{\mathbb{R}}^n\setminus\{0\},
\end{equation}
as well as {\rm (}with $\sigma$ denoting the arc-length
measure on the unit circle $S^1${\rm )}
\begin{equation}\label{Ea4-fCii-n2B}
\int_{S^1}\Big(a^{\beta\alpha}_{rs}\xi_{s'}
-a^{\beta\alpha}_{rs'}\xi_{s}\Big)
\big(\xi_r S_{\gamma\beta}(\xi)\big)\,d\sigma(\xi)=0\,\,\mbox{ if }\,\,n=2.
\end{equation}

Then, with $E=\big(E_{\gamma\beta}\big)_{1\leq\gamma,\beta\leq M}$ denoting
the fundamental solution for the system $L$ from Theorem~\ref{FS-prop},
\begin{equation}\label{eq:Asd.FFF.2}
\begin{array}{l}
a^{\beta\alpha}_{rn}\big(\partial_r E_{\gamma\beta}\big)(x',0)=0
\,\,\mbox{ for each }\,\,x'\in{\mathbb{R}}^{n-1}\setminus\{0\},
\\[6pt]
\mbox{for every fixed multi-indices }\,\,\alpha,\gamma\in\{1,\dots,M\}.
\end{array}
\end{equation}
\end{proposition}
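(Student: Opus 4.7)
The strategy is to reformulate the desired vanishing condition through Fourier analysis. By Theorem~\ref{FS-prop}(7) applied with $m=1$, the fundamental solution satisfies $\widehat{E}=S$, so the smooth function
\[
F_{\gamma\alpha}(x):=a^{\beta\alpha}_{rn}(\partial_r E_{\gamma\beta})(x),\qquad x\in\mathbb{R}^n\setminus\{0\},
\]
which is positive homogeneous of degree $1-n$, has Fourier transform $\widehat{F_{\gamma\alpha}}(\xi)=iT^{(n)}_{\gamma\alpha}(\xi)$, where I set $T^{(s)}_{\gamma\alpha}(\xi):=\xi_r a^{\beta\alpha}_{rs} S_{\gamma\beta}(\xi)$ for $1\leq s\leq n$. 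The goal is to exhibit $T^{(n)}_{\gamma\alpha}$ as a $\xi_n$-derivative of a function $U_{\gamma\alpha}$ on $\mathbb{R}^n\setminus\{0\}$; taking the inverse Fourier transform then converts $\partial_{\xi_n}$ into multiplication by $-ix_n$, yielding (modulo a polynomial correction to be eliminated by homogeneity) $F_{\gamma\alpha}(x)=x_n\,W_{\gamma\alpha}(x)$ for some $W_{\gamma\alpha}$ smooth off the origin. Evaluating at $(x',0)$ with $x'\neq 0$ then immediately gives the claim.

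The crucial input is to recognize the hypothesis \eqref{Ea4-fCii-n3} as the closedness of the $1$-form $\omega_{\gamma\alpha}:=T^{(s)}_{\gamma\alpha}\,d\xi_s$ on $\mathbb{R}^n\setminus\{0\}$. Indeed, applying the Leibniz rule directly shows that the bracket in \eqref{Ea4-fCii-n3} equals $\partial_{\xi_{s'}}T^{(s)}_{\gamma\alpha}-\partial_{\xi_{s}}T^{(s')}_{\gamma\alpha}$. I would next establish exactness of $\omega_{\gamma\alpha}$ on $\mathbb{R}^n\setminus\{0\}$: for $n\geq 3$ this is immediate from the Poincar\'e lemma thanks to simple connectedness of the punctured Euclidean space, while for $n=2$ the single nontrivial period is $\oint_{S^1}\omega_{\gamma\alpha}$, which a direct parametrization $\xi=(\cos\theta,\sin\theta)$ identifies (up to a nonzero multiplicative constant) with the integral appearing in \eqref{Ea4-fCii-n2B} and which hence vanishes by assumption. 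This produces $U_{\gamma\alpha}\in\mathscr{C}^\infty(\mathbb{R}^n\setminus\{0\})$ with $\partial_{\xi_s}U_{\gamma\alpha}=T^{(s)}_{\gamma\alpha}$ for every $s$.

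To promote the pointwise relation $T^{(n)}_{\gamma\alpha}=\partial_{\xi_n}U_{\gamma\alpha}$ to a distributional identity on all of $\mathbb{R}^n$, I would use that $\omega_{\gamma\alpha}$ is positive homogeneous of degree $-1$, from which a direct scaling argument yields $U_{\gamma\alpha}(\lambda\xi)-U_{\gamma\alpha}(\xi)=c_{\gamma\alpha}\log\lambda$ for some constant $c_{\gamma\alpha}\in\mathbb{C}$. In either case $U_{\gamma\alpha}$ extends (through its standard extension across the origin) to a tempered distribution on $\mathbb{R}^n$, and the distributional identity $T^{(n)}_{\gamma\alpha}=\partial_{\xi_n}U_{\gamma\alpha}$ holds up to a distribution supported at $\{0\}$. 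Taking the inverse Fourier transform and setting $W_{\gamma\alpha}:=\mathcal{F}^{-1}[U_{\gamma\alpha}]$ produces
\[
F_{\gamma\alpha}(x)=x_n\,W_{\gamma\alpha}(x)+P_{\gamma\alpha}(x),
\]
where $P_{\gamma\alpha}$ is a polynomial on $\mathbb{R}^n$. Since both $F_{\gamma\alpha}$ and $x_n W_{\gamma\alpha}$ are positive homogeneous of degree $1-n$ on $\mathbb{R}^n\setminus\{0\}$, so is $P_{\gamma\alpha}$; but the only polynomial which is positive homogeneous of negative degree is the zero polynomial, hence $P_{\gamma\alpha}\equiv 0$. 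Since $W_{\gamma\alpha}$ is smooth on $\mathbb{R}^n\setminus\{0\}$ (as the inverse Fourier transform of a degree-$0$ smooth homogeneous function, up to a log correction), evaluating the resulting identity $F_{\gamma\alpha}(x)=x_n W_{\gamma\alpha}(x)$ at $(x',0)$ with $x'\neq 0$ forces $F_{\gamma\alpha}(x',0)=0$, which is exactly the vanishing of the conormal derivative asserted in \eqref{eq:Asd.FFF.2}.

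The principal obstacle is the careful handling of the logarithmic case $c_{\gamma\alpha}\neq 0$: one must verify that a smooth function on $\mathbb{R}^n\setminus\{0\}$ with the scaling behavior $U(\lambda\xi)-U(\xi)=c\log\lambda$ does indeed extend to a tempered distribution whose Fourier transform is smooth on $\mathbb{R}^n\setminus\{0\}$ and whose distributional derivative matches $T^{(n)}$ modulo origin-supported terms, so that the polynomial ambiguity introduced by the Fourier inversion is of the form described above and hence eliminated by the homogeneity argument.
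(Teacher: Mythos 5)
Your argument is correct: the paper states Proposition~\ref{UaalpIKL-PP} as a result imported from \cite{MaMiMi} without reproducing its proof, and the route you take --- reading \eqref{Ea4-fCii-n3} as closedness of the $1$-form $T^{(s)}_{\gamma\alpha}\,d\xi_s$ with $T^{(s)}_{\gamma\alpha}(\xi)=\xi_r a^{\beta\alpha}_{rs}S_{\gamma\beta}(\xi)$, reading \eqref{Ea4-fCii-n2B} as the vanishing of its only period when $n=2$, and then Fourier-inverting a primitive $U_{\gamma\alpha}$ to exhibit $a^{\beta\alpha}_{rn}\partial_rE_{\gamma\beta}$ as $x_n$ times a function smooth off the origin, whence \eqref{eq:Asd.FFF.2} --- is exactly the mechanism these hypotheses are designed for, with the technical Fourier-analytic points (log-homogeneous primitives, origin-supported corrections killed by homogeneity) correctly identified and all resolvable by standard facts about homogeneous tempered distributions together with Theorem~\ref{FS-prop}. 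One detail worth making explicit: your scaling relation $U_{\gamma\alpha}(\lambda\xi)-U_{\gamma\alpha}(\xi)=c_{\gamma\alpha}\log\lambda$ holds because Euler's identity gives $\tfrac{d}{d\lambda}U_{\gamma\alpha}(\lambda\xi)=\lambda^{-1}\xi_sT^{(s)}_{\gamma\alpha}(\xi)$ and $\xi_sT^{(s)}_{\gamma\alpha}(\xi)=\xi_r\xi_s a^{\beta\alpha}_{rs}S_{\gamma\beta}(\xi)=-\delta_{\gamma\alpha}$ is genuinely constant (as $S$ is the inverse of the symbol), which both justifies the claim and shows the logarithmic case occurs precisely when $\gamma=\alpha$.
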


\section{Proof of the main results}
\setcounter{equation}{0}
\label{S-3}

We start by establishing an explicit link between
the Poisson kernel and the fundamental solution for systems satisfying \eqref{eq:Asd.FFF}.

\begin{proposition}\label{uniq:double->!Poisson}
Suppose $L$ is a strongly elliptic, second-order, homogeneous, $M\times M$ system,
with constant complex coefficients, and let
$E=\big(E_{\gamma\beta}\big)_{1\leq\gamma,\beta\leq M}$ be the
fundamental solution for $L$ from Theorem~\ref{FS-prop}. Assume
that the vanishing conormal condition \eqref{eq:Asd.FFF} is satisfied.

Then the Poisson kernel for $L$ in $\mathbb{R}^{n}_+$ given in Theorem~\ref{kkjbhV}
is the matrix-valued function $P^L=(P^L_{\gamma\alpha})_{1\leq\gamma,\alpha\leq M}$
with entries given by
\begin{align}\label{yncuds.AAA}
P^L_{\gamma\alpha}(x')=2a^{\beta\alpha}_{rn}(\partial_r E_{\gamma\beta})(x',1),
\qquad\forall\,x'\in{\mathbb{R}}^{n-1},
\end{align}
for each $\gamma,\alpha\in\{1,...,M\}$. As a consequence,
\begin{align}\label{yfg-yLUU}
K^L_{\gamma\alpha}(x',t):=t^{1-n}P^L_{\gamma\alpha}(x'/t)
=2 a^{\beta\alpha}_{rn}(\partial_r E_{\gamma\beta})(x',t),
\qquad\forall\,(x',t)\in{\mathbb{R}}^n_{+},
\end{align}
for each $\gamma,\alpha\in\{1,...,M\}$.
\end{proposition}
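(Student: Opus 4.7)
The plan is to invoke the uniqueness part of Theorem~\ref{kkjbhV}: I would define the candidate kernel
\[
Q_{\gamma\alpha}(x'):= 2a^{\beta\alpha}_{rn}(\partial_r E_{\gamma\beta})(x',1),\qquad \widetilde K_{\gamma\alpha}(x',t):= 2a^{\beta\alpha}_{rn}(\partial_r E_{\gamma\beta})(x',t),
\]
and verify that $Q$ satisfies properties (1)--(3) of that theorem. Since $\partial_r E$ is positive homogeneous of degree $1-n$ by Theorem~\ref{FS-prop}(4), one immediately obtains $\widetilde K(x',t) = t^{1-n}Q(x'/t)$, so once $Q = P^L$ is established, both \eqref{yncuds.AAA} and \eqref{yfg-yLUU} follow.

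Property (3) is the simplest: since $L$ has constant coefficients and $L E_{\cdot\beta}=0$ in $\mathbb{R}^n\setminus\{0\}$ by Theorem~\ref{FS-prop}(2), differentiation gives $L(\partial_r E_{\cdot\beta})=0$ in $\mathbb{R}^n\setminus\{0\}$, and taking linear combinations yields $L\widetilde K_{\cdot\alpha}=0$ in $\mathbb{R}^n_{+}$. For the size bound (1), the vanishing conormal assumption \eqref{eq:Asd.FFF} says precisely that $\widetilde K(x',0)=0$ for $x'\neq 0$, so the fundamental theorem of calculus gives $\widetilde K(x',1) = 2a^{\beta\alpha}_{rn}\int_0^1(\partial_n\partial_r E_{\gamma\beta})(x',t)\,dt$. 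Combining the estimate $|\partial_n\partial_r E(x',t)|\leq C|(x',t)|^{-n}$ from Theorem~\ref{FS-prop}(5) with the elementary bound $|(x',t)|\geq |x'|/\sqrt{2}$ valid for $t\in[0,1]$ when $|x'|\geq 1$ produces $|Q(x')|\leq C|x'|^{-n}$ for large $|x'|$, which together with the local boundedness of $Q$ delivers \eqref{eq:IG6gy}.

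The main obstacle is the normalization \eqref{eq:IG6gy.2}. To handle it I would introduce the vector field
\[
W_s(x):= 2a^{\beta\alpha}_{rs}(\partial_r E_{\gamma\beta})(x),\qquad s=1,\dots,n,
\]
with $\alpha,\gamma$ fixed, whose $n$-th component restricts to $\widetilde K_{\gamma\alpha}$ on $\mathbb{R}^n_{+}$. A direct calculation that exploits the commutativity of partial derivatives and the transpose identity $E^{L^\top}=(E^L)^\top$ from Theorem~\ref{FS-prop}(7) shows, in the distributional sense on $\mathbb{R}^n$,
\[
\partial_s W_s \;=\; 2\bigl(L^\top E^{L^\top}_{\cdot\gamma}\bigr)_\alpha \;=\; 2\delta_0\,\delta_{\alpha\gamma}.
\]
I would then apply the divergence theorem to the truncated slab $\Omega_{R,T}:=\{(x',t):|x'|<R,\;|t|<T\}$ with $R,T$ large enough so that the origin lies interior to $\Omega_{R,T}$, and use the homogeneity bound $|W(x)|\leq C|x|^{1-n}$ to see that the lateral boundary contribution is $O(T/R)$ and vanishes as $R\to\infty$. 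This produces
\[
\int_{\mathbb{R}^{n-1}}W_n(x',T)\,dx' \;-\; \int_{\mathbb{R}^{n-1}}W_n(x',-T)\,dx' \;=\; 2\delta_{\alpha\gamma}.
\]
Finally, I would invoke the parity $E(-x)=E(x)$ from Theorem~\ref{FS-prop}(1), which makes $W$ odd, so that a change of variables gives $\int W_n(x',-T)\,dx' = -\int W_n(x',T)\,dx'$; taking $T=1$ yields $\int Q_{\gamma\alpha}\,dx'=\delta_{\alpha\gamma}$ for each $\alpha,\gamma$, i.e., $\int Q = I_{M\times M}$. With (1)--(3) now in hand, uniqueness in Theorem~\ref{kkjbhV} completes the proof.
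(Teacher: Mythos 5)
Your proposal is correct, and it verifies properties \textit{(1)} and \textit{(3)} of Theorem~\ref{kkjbhV} exactly as the paper does (FTC in $t$ plus \eqref{eq:Asd.FFF} and the Hessian bound from Theorem~\ref{FS-prop}\,\textit{(5)} for the decay; differentiating $LE_{\cdot\beta}=0$ for the equation), but your treatment of the normalization \eqref{eq:IG6gy.2} is genuinely different. The paper gets \textit{(2)} indirectly: it forms $u=P_t\ast f$, applies the Calder\'on--Zygmund machinery of Theorem~\ref{Main-T2} (the jump-formula \eqref{main-jump}), kills the principal-value term by \eqref{eq:Asd.FFF}, computes $\widehat{K_{\gamma\alpha}}(-{\bf e}_n)=2i\delta_{\gamma\alpha}$ via \eqref{E-ftXC}, and then tests against indicator data at a Lebesgue point to conclude $\int P=I_{M\times M}$. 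You instead compute the integral directly by a flux argument: the distributional identity $\partial_s W_s=2\delta_0\delta_{\alpha\gamma}$ (which is exactly the fundamental-solution property for $L^\top$ combined with $E^{L^\top}=(E^L)^\top$ from Theorem~\ref{FS-prop}\,\textit{(7)}), the $O(T/R)$ lateral decay from homogeneity, integrability of the top/bottom traces coming from the same FTC-plus-\eqref{eq:Asd.FFF} bound, and the evenness $E(-x)=E(x)$ to fold the bottom face onto the top. This is a sound and in fact more elementary, self-contained route for this proposition: it avoids invoking singular-integral theory and the jump formula at this stage, at the cost of a brief extra justification that the divergence theorem applies across the point singularity (excise $B_\varepsilon(0)$, use that $W$ is divergence-free and $(1-n)$-homogeneous away from $0$ so the flux through small spheres is radius-independent and equals $2\delta_{\alpha\gamma}$ by the distributional identity). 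What the paper's heavier route buys is that the same computation simultaneously produces the nontangential boundary convergence $u\big|^{{}^{\rm n.t.}}_{\partial{\mathbb{R}}^n_{+}}=f$, machinery it needs again in the proof of Theorem~\ref{V-Naa.11}, so property \textit{(2)} comes essentially for free there; your argument isolates the normalization as a purely potential-theoretic fact about $E$.
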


\begin{proof}
The task is to show that the matrix-valued function
$P=(P_{\gamma\alpha})_{1\leq\gamma,\alpha\leq M}$ given by
\begin{align}\label{yncuds}
P_{\gamma\alpha}(x'):=2a^{\beta\alpha}_{rn}(\partial_r E_{\gamma\beta})(x',1),
\qquad\forall\,x'\in{\mathbb{R}}^{n-1},
\end{align}
for each $\gamma,\alpha\in\{1,...,M\}$,
coincides with the Poisson kernel for $L$ in $\mathbb{R}^{n}_+$ given in Theorem~\ref{kkjbhV}.
To this end, fix $\gamma,\alpha\in\{1,...,M\}$ arbitrary.
Then, relying on \eqref{yncuds}, \eqref{eq:Asd.FFF}, and the Fundamental
Theorem of Calculus, for every $x'\in\mathbb{R}^{n-1}$ such that $|x'|\geq 1$ we may estimate
\begin{align}\label{yncuds.TT}
\big|P_{\gamma\alpha}(x')\big| &=\Bigg|2a^{\beta\alpha}_{rn}\int_0^1
(\partial_n\partial_r E_{\gamma\beta})(x',t)\,dt\Bigg|
\nonumber\\[4pt]
&\leq C\int_0^1\big|(\nabla^2 E)(x',t)\big|\,dt
\leq C\int_0^1\big|(x',t)\big|^{-n}\,dt
\nonumber\\[4pt]
&\leq C|x'|^{-n}\leq\frac{C}{(1+|x'|^2)^{\frac{n}{2}}},
\end{align}
using 
{\it (5)} in Theorem~\ref{FS-prop}.
Since from \eqref{yncuds} and {\it (1)} in Theorem~\ref{FS-prop}
we have $P\in{\mathscr{C}}^\infty({\mathbb{R}}^{n-1})$,
estimate \eqref{yncuds.TT} eventually shows that $P$ satisfies property {\it (1)}
in Theorem~\ref{kkjbhV}.

Next, by once again relying on the homogeneity property from part {\it (4)}
in Theorem~\ref{FS-prop}, for each $(x',t)\in{\mathbb{R}}^n_{+}$, we may write
\begin{align}\label{yfg-yLLL.BN13}
K_{\gamma\alpha}(x',t)&:=t^{1-n}P_{\gamma\alpha}(x'/t)
=2t^{1-n} a^{\beta\alpha}_{rn}(\partial_r E_{\gamma\beta})(x'/t,t/t)
\nonumber\\[4pt]
&\,\,=2 a^{\beta\alpha}_{rn}(\partial_r E_{\gamma\beta})(x',t).
\end{align}
Thanks to this and part {\it (2)} in Theorem~\ref{FS-prop}, we have
\begin{eqnarray}\label{yfg-yLLL.BN14}
\big(LK_{\cdot\alpha}\big)_{\eta}
=2 a^{\eta\mu}_{kj}a^{\beta\alpha}_{rn}\partial_k\partial_j\partial_r E_{\mu\beta}
=2 a^{\beta\alpha}_{rn}\partial_r (L E_{\cdot\beta})_\eta=0
\,\,\mbox{ in }\,\,{\mathbb{R}}^n_{+},\quad\eta\in\{1,...,M\},
\end{eqnarray}
where $K_{\cdot\alpha}$ and $E_{\cdot\beta}$ denote the column vectors
$(K_{\mu\alpha})_{1\leq\mu\leq M}$ and $(E_{\mu\beta})_{1\leq\mu\leq M}$, respectively.
This proves that $P$ satisfies property {\it (3)} in Theorem~\ref{kkjbhV}.

To proceed, fix $f=(f_\alpha)_{1\leq\alpha\leq M}\in L^p({\mathbb{R}}^{n-1})$,
$1<p<\infty$, and consider the vector-valued function defined at each point
$(x',t)\in{\mathbb{R}}^n_{+}$ by the formula
\begin{align}\label{yfg-yLLL.BN5}
u(x',t) &:=(P_t\ast f)(x')=\int_{\mathbb{R}^{n-1}}
t^{1-n}\,P\Big(\frac{x'-y'}{t}\Big)\,f(y')\,dy'
\nonumber\\[4pt]
&\,\,=\Bigl(\int_{\mathbb{R}^{n-1}}
K_{\gamma\alpha}(x'-y',t)\,f_\alpha(y')\,dy'\Bigr)_{1\leq\gamma\leq M},
\end{align}
where the last equality uses \eqref{yfg-yLLL.BN13}. Hence, if for each
$\gamma,\alpha\in\{1,...,M\}$ we now introduce the integral operator
${\mathcal{T}}_{\gamma\alpha}$ as in \eqref{T-layer} associated to the kernel
$K_{\gamma\alpha}$ from \eqref{yfg-yLLL.BN13}, then \eqref{yfg-yLLL.BN5} may
be simply re-written as
\begin{eqnarray}\label{yfg-yLLL.BN7}
u=\big({\mathcal{T}}_{\gamma\alpha}f_\alpha\big)_{1\leq\gamma\leq M}
\,\,\,\mbox{ in }\,\,\,{\mathbb{R}}^n_{+}.
\end{eqnarray}
Since the kernels $K_{\gamma\alpha}$ satisfy \eqref{ker} (as seen from
\eqref{yfg-yLLL.BN13} and the properties of $E$),
Theorem~\ref{Main-T2} gives that at a.e. $x'\in{\mathbb{R}}^{n-1}$ we have
\begin{eqnarray}\label{yfg-yLLL.BN8}
\Bigl(u\Big|_{\partial{\mathbb{R}}^n_{+}}^{{}n.t.}\Big)(x')
=\Bigl(\tfrac{1}{2i}\,\widehat{K_{\gamma\alpha}}(-{\bf e}_n)f_{\alpha}(x')
+T_{\gamma\alpha}f_{\alpha}(x')\Big)_{1\leq\gamma\leq M},
\end{eqnarray}
where each $T_{\gamma\alpha}$ is associated with $K_{\gamma\alpha}$ as in
\eqref{main-lim}. However, in our case for each $x'\in{\mathbb{R}}^{n-1}$ we have
\begin{eqnarray}\label{yfg-yLLL.BN9}
T_{\gamma\alpha}f_{\alpha}(x')=\lim_{\varepsilon\to 0^{+}}
\int\limits_{\substack{y'\in{\mathbb{R}}^{n-1}\\ |x'-y'|>\varepsilon}}
K_{\gamma\alpha}(x'-y',0)f_{\alpha}(y')\,dy'=0,\qquad\forall\,\gamma\in\{1,...,M\},
\end{eqnarray}
by \eqref{yfg-yLLL.BN13} and \eqref{eq:Asd.FFF}. Moreover, from \eqref{yfg-yLLL.BN13}
and \eqref{E-ftXC} we deduce that if
\begin{equation}\label{eq:Bbvc9h7}
B:=\big(a^{\alpha\beta}_{nn}\big)_{1\leq\alpha,\beta\leq M}\in{\mathbb{C}}^{M\times M}
\end{equation}
then
\begin{eqnarray}\label{yfg-yLLL.BN10}
\widehat{K_{\gamma\alpha}}(-{\bf e}_n)=2i B_{\beta\alpha}\big(B^{-1}\big)_{\gamma\beta}
=2i\delta_{\gamma\alpha}.
\end{eqnarray}
In concert, \eqref{yfg-yLLL.BN8}-\eqref{yfg-yLLL.BN10} imply that
\begin{eqnarray}\label{yfg-yLLL.BN8X}
u\big|_{\partial{\mathbb{R}}^n_{+}}^{{}^{\rm n.t.}}=f
\,\,\,\mbox{ a.e. in }\,\,\,{\mathbb{R}}^{n-1}.
\end{eqnarray}
In light of \eqref{yfg-yLLL.BN5} this shows that
for a.e. $x'\in\mathbb{R}^{n-1}$ we have
\begin{eqnarray}\label{yfg-yLLL.BN11}
\lim_{t\to 0^+}(P_t\ast f)(x')=f(x').
\end{eqnarray}
Fix $j\in\{1,\dots,n\}$ and consider the function $f={\bf 1}_{B_{n-1}(0',1)}{\bf e}_j$
where, generally speaking, $B_{n-1}(x',r)$ denotes the $(n-1)$-dimensional ball of radius $r>0$ centered at
$x'\in{\mathbb{R}}^{n-1}$. Then there exists a point $x'\in B_{n-1}(0',1)$ at which
\eqref{yfg-yLLL.BN11} holds. For such a point $x'$ we then have
\begin{align}\label{yfg-yLLL.BN12}
f(x') &=\lim_{t\to 0^+}(P_t\ast f)(x')
=\lim_{t\to 0^+}\int_{{\mathbb{R}}^{n-1}}P(z')f(x'-tz')\,dz'
\nonumber\\[4pt]
&=\Big(\int_{{\mathbb{R}}^{n-1}}P(z')\,dz'\Big)f(x'),
\end{align}
where the second equality in \eqref{yfg-yLLL.BN12} is obtained via a change
of variables and the last equality in \eqref{yfg-yLLL.BN12} follows by applying
Lebesgue's Dominated Convergence Theorem. Thus, $\int_{{\mathbb{R}}^{n-1}}P(z')\,dz'$
is an $M\times M$ matrix whose action preserves ${\bf e}_j$. Since $j\in\{1,\dots,n\}$ was
arbitrary, this readily implies that $P$ satisfies {\it (2)} in Theorem~\ref{kkjbhV}.
With this in hand, the uniqueness statement in Theorem~\ref{kkjbhV}
finishes the proof of the proposition.
\end{proof}

After this preamble, we are prepared to present the proof of Theorem~\ref{V-Naa.11}.

\vskip 0.08in
\begin{proof}[Proof of Theorem~\ref{V-Naa.11}]
Let $L$ be a system, with complex constant coefficients, as in \eqref{L-def}-\eqref{L-ell.X}.
Denote by $P^L$ the Poisson kernel of $L$ and fix $p\in(1,\infty)$. From \eqref{eq:Cabnm} we know that
the domain of the infinitesimal generator ${\mathbf{A}}$ of the semigroup $T$
from \eqref{eq:Taghb8} is given by
\begin{equation}\label{eq:Cabnm.2}
D({\mathbf{A}})=\Big\{f\in L^p({\mathbb{R}}^{n-1}):\,
\lim_{t\to 0^{+}}\frac{T(t)f-f}{t}\,\,\mbox{ exists in }\,\,L^p({\mathbb{R}}^{n-1})\Big\}.
\end{equation}
The first order of business is proving \eqref{eq:tfc.1-new}.
To this end, let $f\in D({\mathbf{A}})$ be an arbitrary, fixed, function.
In particular, $f\in L^p({\mathbb{R}}^{n-1})$ and we introduce
\begin{equation}\label{eq:FDww}
u(x',t):=\big(T(t)f\big)(x')=(P^L_t\ast f)(x')\,\,\,\mbox{ for }\,\,\,(x',t)\in{\mathbb{R}}^n_{+}.
\end{equation}
Hence, if we now fix some $\kappa>0$, it follows from \eqref{eq:FDww}, \eqref{exTGFVC},
and parts {\it (6)}, {\it (7)} in Theorem~\ref{kkjbhV} that
\begin{equation}\label{KJHab}
\begin{array}{c}
u\in{\mathscr{C}}^\infty({\mathbb{R}}^n_{+}),\quad Lu=0\,\,\mbox{ in }\,\,{\mathbb{R}}^n_{+},
\\[6pt]
{\mathcal{N}}_\kappa u\in L^p({\mathbb{R}}^{n-1})\,\,\mbox{ and }\,\,
u\big|^{{}^{\rm n.t.}}_{\partial{\mathbb{R}}^n_{+}}=f\,\,\mbox{ a.e.~in }\,\,{\mathbb{R}}^{n-1}.
\end{array}
\end{equation}
Also, set
\begin{equation}\label{eq:FDww.2}
g:={\mathbf{A}}f=\lim_{t\to 0^{+}}\frac{T(t)f-f}{t}\in L^p({\mathbb{R}}^{n-1}).
\end{equation}
Then, for each $s\in(0,\infty)$, the properties of the semigroup $T$ from \eqref{eq:Taghb8} permit us
to write
\begin{align}\label{eq:FDww.3}
T(s)g &=T(s)\Big(\lim_{t\to 0^{+}}\frac{T(t)f-f}{t}\Big)
=\lim_{t\to 0^{+}}T(s)\Big(\frac{T(t)f-f}{t}\Big)
\nonumber\\[4pt]
&=\lim_{t\to 0^{+}}\frac{T(t+s)f-T(s)f}{t}
=\lim_{t\to 0^{+}}\frac{u(\cdot,t+s)-u(\cdot,s)}{t}
\,\,\mbox{ in }\,\,L^p({\mathbb{R}}^{n-1}).
\end{align}
As such, for each $s\in(0,\infty)$ fixed,
there exists $\{t_j\}_{j\in{\mathbb{N}}}\subset(0,\infty)$ with the property that
\begin{equation}\label{eq:Jfac}
\lim_{j\to\infty}t_j=0\,\,\mbox{ and }\,\,
\lim_{j\to\infty}\frac{u(\cdot,t_j+s)-u(\cdot,s)}{t_j}=T(s)g
\,\,\mbox{ a.e. in }\,\,{\mathbb{R}}^{n-1}.
\end{equation}
In turn, \eqref{eq:Jfac} forces
\begin{equation}\label{eq:Jfac.2}
(\partial_n u)(\cdot,s)=T(s)g\,\,\mbox{ a.e. in }\,\,{\mathbb{R}}^{n-1},
\,\,\mbox{ for each }\,\,s\in(0,\infty).
\end{equation}
Keeping in mind \eqref{eq:Taghb8} and adjusting notation, we may rephrase this as
\begin{equation}\label{eq:Jfac.2b}
(\partial_n u)(x',t)=(P^L_t\ast g)(x')\,\,\mbox{ for each }\,\,x'\in{\mathbb{R}}^{n-1}
\,\,\mbox{ and }\,\,t\in(0,\infty).
\end{equation}
From \eqref{eq:Jfac.2b} and part {\it (7)} in Theorem~\ref{kkjbhV} we deduce that
\begin{equation}\label{KBvcac65f5}
\mbox{$(\partial_n u)\big|^{{}^{\rm n.t.}}_{\partial{\mathbb{R}}^n_{+}}$ exists
a.e.~in ${\mathbb{R}}^{n-1}$}
\end{equation}
and, in fact,
\begin{equation}\label{eq:JfREDv}
g=(\partial_n u)\big|^{{}^{\rm n.t.}}_{\partial{\mathbb{R}}^n_{+}}
\,\,\mbox{ a.e.~in }\,\,{\mathbb{R}}^{n-1}.
\end{equation}
In concert with part {\it (6)} in Theorem~\ref{kkjbhV},
formula \eqref{eq:Jfac.2b} also implies
\begin{equation}\label{eq:jufds}
{\mathcal{N}}_\kappa(\partial_n u)\leq C{\mathcal{M}}g\,\,\mbox{ in }\,\,{\mathbb{R}}^{n-1}.
\end{equation}
Hence, given that $g\in L^p({\mathbb{R}}^{n-1})$ and ${\mathcal{M}}$ is bounded
on $L^p({\mathbb{R}}^{n-1})$,
\begin{equation}\label{eq:jufds.22}
{\mathcal{N}}_\kappa(\partial_n u)\in L^p({\mathbb{R}}^{n-1}).
\end{equation}

Moving on, fix an arbitrary point $(x',t)\in{\mathbb{R}}^n_{+}$.
Based on \eqref{KJHab} and the interior estimate \eqref{detraz} in the ball
$B\big((x',t),t/2\big)\subset{\mathbb{R}}^n_{+}$,
for each $k\in{\mathbb{N}}_0$ we may estimate
\begin{align}\label{detraz.TTT.UU}
\big|(\nabla^k u)(x',t)\big|
&\leq\frac{C}{t^k}\,\,\aver{B\big((x',t),t/2\big)}|u(y',s)|\,dy'ds
\nonumber\\[4pt]
&\leq\frac{C}{t^{k+n}}\int_{t/2}^{3t/2}\int_{B_{n-1}(x',t/2)}|u(y',s)|\,dy'ds
\nonumber\\[4pt]
&\leq\frac{C}{t^{k}}\,\,\aver{B_{n-1}(x',t/2)}({\mathcal{N}}_\kappa u)(y')\,dy'
\nonumber\\[4pt]
&\leq\frac{C}{t^{k}}\,{\mathcal{M}}\big({\mathcal{N}}_\kappa u\big)(x').
\end{align}
An alternative version of the above estimate which is also going to be useful
for us shortly (once again based on the interior estimate \eqref{detraz}) reads
\begin{align}\label{eq:jufds.23AAA}
\big|(\nabla^k u)(x',t)\big|
&\leq\frac{C}{t^k}\left(\aver{B\big((x',t),t/2\big)}|u(y',s)|^p\,dy'ds\right)^{1/p}
\nonumber\\[4pt]
&\leq\frac{C}{t^{k+n/p}}\left(\int_{t/2}^{3t/2}\int_{B_{n-1}(x',t/2)}|u(y',s)|^p\,dy'ds\right)^{1/p}
\nonumber\\[4pt]
&\leq\frac{C}{t^{k+(n-1)/p}}\left(\int_{B_{n-1}(x',t/2)}({\mathcal{N}}_\kappa u)(y')^p\,dy'\right)^{1/p}
\nonumber\\[4pt]
&\leq C\,t^{-k-(n-1)/p}\,
\|{\mathcal{N}}_\kappa u\|_{L^p({\mathbb{R}}^{n-1})},
\end{align}
for each $(x',t)\in{\mathbb{R}}^n_{+}$ and each $k\in{\mathbb{N}}_0$,
where $C=C(n,L,k,p)\in(0,\infty)$.

To proceed, fix $\varepsilon>0$ and define
\begin{equation}\label{jxfrA-2}
u_\varepsilon:{\mathbb{R}}^n_{+}\to {\mathbb{C}}^M,\quad
u_\varepsilon(x',t):=u(x',t+\varepsilon),\quad\forall\,(x',t)\in{\mathbb{R}}^n_{+}.
\end{equation}
Then, from \eqref{KJHab},
\begin{equation}\label{jxfrA}
\begin{array}{c}
u_\varepsilon\in{\mathscr{C}}^\infty(\overline{{\mathbb{R}}^n_{+}}),
\quad Lu_\varepsilon=0\,\,\mbox{ in }\,\,{\mathbb{R}}^n_{+},\quad
{\mathcal{N}}_\kappa u_\varepsilon\in L^p({\mathbb{R}}^{n-1}),
\\[4pt]
\mbox{ and }\,\,
u_\varepsilon\big|_{\partial{\mathbb{R}}^n_{+}}
=u(\cdot,\varepsilon)\,\,\mbox{ in }\,\,{\mathbb{R}}^{n-1}.
\end{array}
\end{equation}
In addition, from \eqref{detraz.TTT.UU} we see that for each $k\in{\mathbb{N}}_0$
and each $(x',t)\in{\mathbb{R}}^n_{+}$,
\begin{align}\label{elsiWth.UU}
\big|(\nabla^k u_\varepsilon)(x',t)\big|
&=\big|(\nabla^k u)(x',t+\varepsilon)\big|
\nonumber\\[4pt]
&\leq C_{L,n,k}(t+\varepsilon)^{-k}\,
\,{\mathcal{M}}\big({\mathcal{N}}_\kappa u\big)(x')
\nonumber\\[4pt]
&\leq C_{L,n,k}\,\varepsilon^{-k}\,
\,{\mathcal{M}}\big({\mathcal{N}}_\kappa u\big)(x'),
\end{align}
hence,
\begin{align}\label{elsiWth.UU2}
\big|(\nabla^k u_\varepsilon)_{\rm rad}(x')\big|
:=\sup_{t>0}\big|(\nabla^k u_\varepsilon)(x',t)\big|
\leq C_{L,n,k,\varepsilon}\,\,{\mathcal{M}}\big({\mathcal{N}}_\kappa u\big)(x').
\end{align}
For each fixed $x'\in{\mathbb{R}}^{n-1}$ note that if
$(y',s)\in\Gamma_\kappa(x')$ then
\begin{equation}\label{jhgyw}
B_{n-1}(y',s/2)\subset B_{n-1}\big(x',(1/2+\kappa)s\big).
\end{equation}
This and interior estimates then permit us to write (again, for each $k\in{\mathbb{N}}_0$)
\begin{align}\label{elsiWth.UU2b}
{\mathcal{N}}_\kappa(\nabla^k u_\varepsilon)(x') &=\sup_{(y',s)\in\Gamma_\kappa(x')}
\big|(\nabla^k u_\varepsilon)(y',s)\big|
\nonumber\\[4pt]
&\leq C\sup_{(y',s)\in\Gamma_\kappa(x')}\aver{B\big((y',s),s/2\big)}
\big|(\nabla^k u_\varepsilon)(z)\big|\,dz
\nonumber\\[4pt]
&\leq C\sup_{(y',s)\in\Gamma_\kappa(x')} s^{1-n}\int_{B_{n-1}(y',s/2)}
\big|(\nabla^k u_\varepsilon)_{\rm rad}(z')\big|\,dz'
\nonumber\\[4pt]
&\leq C\sup_{(y',s)\in\Gamma_\kappa(x')}\aver{B_{n-1}(x',s/2)}
\big|(\nabla^k u_\varepsilon)_{\rm rad}(z')\big|\,dz'
\nonumber\\[4pt]
&\leq C{\mathcal{M}}\big((\nabla^k u_\varepsilon)_{\rm rad}\big)(x'),
\qquad\forall\,x'\in{\mathbb{R}}^{n-1}.
\end{align}
Combining \eqref{elsiWth.UU2b}, \eqref{elsiWth.UU2}, and the homogeneity and
monotonicity of ${\mathcal{M}}$, we further obtain
\begin{align}\label{elsiWth.UU3}
{\mathcal{N}}_\kappa(\nabla^k u_\varepsilon)(x')\leq C
{\mathcal{M}}\big((\nabla^k u_\varepsilon)_{\rm rad}\big)(x')
\leq C_{L,n,k,\varepsilon}\,
{\mathcal{M}}\Big({\mathcal{M}}\big({\mathcal{N}}_\kappa u\big)\Big)(x')
\end{align}
for each $x'\in{\mathbb{R}}^{n-1}$ and each $k\in{\mathbb{N}}_0$. In concert with the
$L^p$ boundedness of the Hardy-Littlewood operator, \eqref{elsiWth.UU3} implies that
\begin{align}\label{elsiWth.UU3b}
\big\|{\mathcal{N}}_\kappa(\nabla^k u_\varepsilon)\big\|_{L^p({\mathbb{R}}^{n-1})}
\leq C_{L,n,k,\varepsilon}\big\|{\mathcal{N}}_\kappa u\big\|_{L^p({\mathbb{R}}^{n-1})},
\qquad\forall\,k\in{\mathbb{N}}_0.
\end{align}
In addition, from \eqref{eq:jufds.23AAA} we see that for each $(x',t)\in{\mathbb{R}}^n_{+}$
and $k\in{\mathbb{N}}_0$
\begin{equation}\label{elsiWth}
\big|(\nabla^k u_\varepsilon)(x',t)\big|
=\big|(\nabla^k u)(x',t+\varepsilon)\big|
\leq C_{L,n,k,p}(t+\varepsilon)^{-k-(n-1)/p}\,
\|{\mathcal{N}}_\kappa u\|_{L^p({\mathbb{R}}^{n-1})},
\end{equation}
hence,
\begin{equation}\label{eq:jBBV}
\lim_{t\to\infty}\Big[t^k\big|(\nabla^k u_\varepsilon)(x',t)\big|\Big]=0,
\qquad\forall\,k\in{\mathbb{N}}_0,
\end{equation}
uniformly for $x'$ in ${\mathbb{R}}^{n-1}$.

Next, pick $j\in\{1,\dots,n\}$ arbitrary and for each $s>0$ fixed for the time being,
consider the function $w:{\mathbb{R}}^n_{+}\to {\mathbb{C}}^M$ defined by
$w(x',t):=(\partial_j\partial_nu_\varepsilon)(x',t+s)$ for $(x',t)\in{\mathbb{R}}^n_{+}$.
Then \eqref{jxfrA} and \eqref{elsiWth.UU3b} imply that
$w\in{\mathscr{C}}^\infty(\overline{{\mathbb{R}}^n_{+}})$, $Lw=0$ in ${\mathbb{R}}^n_{+}$, and
${\mathcal{N}}_\kappa w\in L^p({\mathbb{R}}^{n-1})$.
Invoking the Fatou-type result in Theorem~\ref{tuFatou} for the function
$w$ then yields the representation
\begin{align}\label{fsrF}
(\partial_j\partial_nu_\varepsilon)(x',t+s)
&=\big(P_t^L\ast(\partial_j\partial_nu_\varepsilon)(\cdot,s)\big)(x')
\nonumber\\[4pt]
&=\int_{{\mathbb{R}}^{n-1}}K^L(x'-y',t)(\partial_j\partial_nu_\varepsilon)(y',s)\,dy'
\end{align}
for each $(x',t)\in{\mathbb{R}}^n_{+}$ and each $s\in(0,\infty)$.
Fix now an arbitrary function $h\in{\mathscr{C}}^\infty_0({\mathbb{R}}^{n-1})$.
Applying first $\frac{d}{dt}$ to both sides of identity \eqref{fsrF}, then letting
$s=t$, and then integrating on ${\mathbb{R}}^{n-1}\times(0,\infty)$
with respect to the measure $h(x')t\,dx'\,dt$, yields
\begin{align}\label{GEsc}
& \int_0^\infty\int_{{\mathbb{R}}^{n-1}}(\partial_n^2\partial_j u_\varepsilon)(x',2t)h(x')t\,dx'\,dt
\\[4pt]
&=\int_0^\infty\int_{{\mathbb{R}}^{n-1}}\int_{{\mathbb{R}}^{n-1}}
(\partial_n K^L)(x'-y',t)(\partial_j\partial_n u_\varepsilon)(y',t)h(x')t\,dy'\,dx'\,dt.
\nonumber
\end{align}
Note that
\begin{equation}\label{eq:yTVV}
(\partial_n^2\partial_j u_\varepsilon)(x',2t)
=\frac{1}{4}\Big(\frac{d}{dt}\Big)^2\big[(\partial_j u_\varepsilon)(x',2t)\big],
\qquad\forall\,(x',t)\in{\mathbb{R}}^n_{+},
\end{equation}
so Fubini's Theorem, integration by parts, and the Fundamental Theorem of Calculus
in the variable $t$ yield, upon recalling \eqref{eq:jBBV}, that
\begin{align}\label{GEsc-2}
\int_0^\infty\int_{{\mathbb{R}}^{n-1}}(\partial_n^2\partial_j u_\varepsilon)(x',2t)h(x')t\,dx'\,dt
&=-\frac{1}{4}\int_{{\mathbb{R}}^{n-1}}\int_0^\infty
\frac{d}{dt}\big[(\partial_j u_\varepsilon)(x',2t)\big]h(x')\,dt\,dx'
\nonumber\\[4pt]
&=\frac{1}{4}\int_{{\mathbb{R}}^{n-1}}(\partial_j u_\varepsilon)(x',0)h(x')\,dx'.
\end{align}
As a preamble to dealing with the integral in the right-hand side of \eqref{GEsc},
we make the following observation. Recall the area-function from \eqref{ga.12-hat}
and let $v_{n-1}$ be the volume of the unit ball in ${\mathbb{R}}^{n-1}$.
Also, let $p'\in(1,\infty)$ be such that $1/p+1/p'=1$. Then, given two measurable
functions $v_1,v_2$ defined in ${\mathbb{R}}^n_{+}$, by \eqref{ga.12-hat},
Fubini's theorem, and H\"older's inequality we obtain
\begin{align}\label{uasgh}
v_{n-1}\kappa^{n-1}\int_{{\mathbb{R}}^n_{+}} & |v_1(y',t)||v_2(y',t)|\,t\,dy'\,dt
\nonumber\\[4pt]
&=\int_{{\mathbb{R}}^n_{+}}\Bigg(\int_{\mathbb{R}^{n-1}}
{\bf 1}_{|x'-y'|<\kappa t}\,dx'\Bigg)|v_1(y',t)||v_2(y',t)|\,\frac{dy'\,dt}{t^{n-2}}
\nonumber\\[4pt]
&=\int_{\mathbb{R}^{n-1}}\Bigg(\int_{\Gamma_{\kappa}(x')}
|v_1(y',t)||v_2(y',t)|\,\frac{dy'\,dt}{t^{n-2}}\Bigg)dx'
\nonumber\\[4pt]
&\leq \int_{\mathbb{R}^{n-1}}({\mathcal{A}}_{\kappa}v_1)(x')
({\mathcal{A}}_{\kappa}v_2)(x')\,dx'
\nonumber\\[4pt]
&\leq\big\|{\mathcal{A}}_{\kappa}v_1\big\|_{L^p({\mathbb{R}}^{n-1})}
\big\|{\mathcal{A}}_{\kappa}v_2\big\|_{L^{p'}({\mathbb{R}}^{n-1})}.
\end{align}

Returning to the mainstream discussion, for each $\ell\in\{1,\dots,n\}$ define
\begin{align}\label{GEsc-bdt}
(\Theta^\ell h)(x',t):=\Big(
\int_{{\mathbb{R}}^{n-1}}\theta^\ell_{\alpha\beta}(x',t;y')h(y')\,dy'\Big)_{1\leq\alpha,\beta\leq M},
\qquad\forall\,(x',t)\in{\mathbb{R}}^n_{+},
\end{align}
where, for each $\alpha,\beta\in\{1,\dots,M\}$,
\begin{equation}\label{eq:KhAFV}
\theta^\ell_{\alpha\beta}(x',t;y'):=\big(\partial_\ell K^L_{\alpha\beta}\big)(y'-x',t)\quad
\mbox{for $x',y'\in\mathbb{R}^{n-1}$ and $t>0$}.
\end{equation}
In this regard, we first observe that \eqref{eq:Kest} implies
\begin{equation}\label{est-theta-K}
|\theta^\ell_{\alpha\beta}(x',t;y')|\leq |\nabla K^L_{\alpha\beta}(y'-x',t)|
\leq C|(x'-y',t)|^{-n}
\end{equation}
as well as
\begin{equation}\label{est-theta-K-nabla}
|\nabla_{y'}\theta^\ell_{\alpha\beta}(x',t;y')|\leq C|\nabla^2 K^L_{\alpha\beta}(y'-x',t)|
\leq C|(x'-y',t)|^{-n-1},
\end{equation}
for all $\alpha,\beta\in\{1,\dots,M\}$ and $\ell\in\{1,\dots,n\}$.
These properties are in agreement with \eqref{SFE-est-theta}. On the other hand,
by $(2)$ in Theorem~\ref{kkjbhV} we have the cancellation property
\begin{align}\label{est-theta-vanish}
\int_{\mathbb{R}^{n-1}}\theta^n_{\alpha\beta}(x',t;y')\,dy'
&=\int_{\mathbb{R}^{n-1}}\big(\partial_n K^L_{\alpha\beta}\big)(y'-x',t)\,dy'
\nonumber\\[4pt]
&=\frac{d}{dt}\int_{\mathbb{R}^{n-1}}K^L_{\alpha\beta}(y',t)\,dy'
=\frac{d}{dt}\int_{\mathbb{R}^{n-1}}(P^L_{\alpha\beta})_t(y')\,dy'
\nonumber\\[4pt]
&=\frac{d}{dt}\int_{\mathbb{R}^{n-1}}P^L_{\alpha\beta}(y')\,dy'
=0,
\end{align}
and, similarly, when $\ell\in\{1,\dots,n-1\}$,
\begin{align}\label{est-theta-vanish-ell}
\int_{\mathbb{R}^{n-1}}\theta^\ell_{\alpha\beta}(x',t;y')\,dy'
&=\int_{\mathbb{R}^{n-1}}\big(\partial_\ell K^L_{\alpha\beta}\big)(y'-x',t)\,dy'
\nonumber\\[4pt]
&=-\frac{\partial}{\partial_{x_\ell}}\int_{\mathbb{R}^{n-1}}K^L_{\alpha\beta}(y',t)\,dy'
=0.
\end{align}
These computations allow us to apply Theorem~\ref{prop:SFE} and write
\begin{equation}\label{eq:nbdu}
\big\|{\mathcal{A}}_{\kappa}(\Theta^\ell\eta)\big\|_{L^p(\mathbb{R}^{n-1})}
\leq C_{\kappa,L,p}\|\eta\|_{L^p(\mathbb{R}^{n-1})},
\quad\forall\,\eta\in L^p(\mathbb{R}^{n-1}),\,\,\forall\,\ell\in\{1,\dots,n\}.\quad
\end{equation}

To proceed, note that for each $\ell\in\{1,\dots,n\}$ we have
\begin{align}\label{GEsc-d}
&\int_0^\infty\int_{{\mathbb{R}}^{n-1}}\int_{{\mathbb{R}}^{n-1}}
(\partial_\ell K^L)(x'-y',t)(\partial_j\partial_n u_\varepsilon)(y',t)h(x')t\,dy'\,dx'\,dt
\nonumber\\[4pt]
&=\int_0^\infty\int_{{\mathbb{R}}^{n-1}}(\Theta^\ell h)(y',t)
(\partial_j\partial_n u_\varepsilon)(y',t)t\,dy'\,dt.
\end{align}
From \eqref{GEsc}, \eqref{GEsc-2}, and \eqref{GEsc-d} (with $\ell=n$) it follows that
\begin{equation}\label{GEsc-2dg}
\frac{1}{4}\int_{{\mathbb{R}}^{n-1}}(\partial_j u_\varepsilon)(x',0)h(x')\,dx'
=\int_{{\mathbb{R}}^n_{+}}(\Theta^n h)(y',t)(\partial_j\partial_n u_\varepsilon)(y',t)
t\,dy'\,dt.
\end{equation}
Upon invoking \eqref{uasgh} with $v_1:=\partial_j\partial_n u_\varepsilon$ and $v_2:=\Theta^n h$,
as well as Theorem~\ref{prop:SFE}, identity \eqref{GEsc-2dg} further yields
\begin{align}\label{GEsc-2dgf}
\Big|\int_{{\mathbb{R}}^{n-1}}(\partial_j u_\varepsilon)(x',0)h(x')\,dx'\Big|
&\leq\frac{4}{v_{n-1}\kappa^{n-1}}
\big\|{\mathcal{A}}_{\kappa}(\partial_j\partial_n u_\varepsilon)\big\|_{L^p({\mathbb{R}}^{n-1})}
\big\|{\mathcal{A}}_{\kappa}(\Theta^n h)\big\|_{L^{p'}({\mathbb{R}}^{n-1})}
\nonumber\\[4pt]
&\leq C_{n,\kappa,L,p}
\big\|{\mathcal{A}}_{\kappa}(\partial_j\partial_n u_\varepsilon)\big\|_{L^p({\mathbb{R}}^{n-1})}
\|h\|_{L^{p'}({\mathbb{R}}^{n-1})}.
\end{align}
Since $h\in{\mathscr{C}}^\infty_0({\mathbb{R}}^{n-1})$ is arbitrary,
\eqref{GEsc-2dgf} and Riesz's Representation Theorem imply
\begin{equation}\label{jdywtg}
\big\|(\partial_j u_\varepsilon)(\cdot,0)\big\|_{L^p({\mathbb{R}}^{n-1})}
\leq C_{n,\kappa,L,p}
\big\|{\mathcal{A}}_{\kappa}(\partial_j\partial_n u_\varepsilon)\big\|_{L^p({\mathbb{R}}^{n-1})}.
\end{equation}
On the other hand, from \eqref{jxfrA}, \eqref{elsiWth.UU3b}, and Theorem~\ref{tuFatou}
we obtain
\begin{equation}\label{eq:Temp1}
(\partial_j u_\varepsilon)(x',t)=\big(P^L_t\ast(\partial_j u_\varepsilon)(\cdot,0)\big)(x'),
\qquad\forall\,(x',t)\in{\mathbb{R}}^n_{+},
\end{equation}
as well as
\begin{align}\label{eq:Temp2}
(\partial_j \partial_n u_\varepsilon)(x',t)
&=\partial_j\big(P^L_t\ast(\partial_n u_\varepsilon)(\cdot,0)\big)(x')
\nonumber\\[4pt]
&=\big(\Theta^j (\partial_n u_\varepsilon)(\cdot,0)\big)(x',t),
\qquad\forall\,(x',t)\in{\mathbb{R}}^n_{+},
\end{align}
taking \eqref{eq:Gvav7g5}, \eqref{GEsc-bdt}-\eqref{eq:KhAFV} into account. Consequently,
for each $j\in\{1,\dots,n\}$,
\begin{align}\label{eq:iFD}
\|{\mathcal{N}}_{\kappa}(\partial_j u_\varepsilon)\|_{L^p({\mathbb{R}}^{n-1})} &\leq C
\big\|{\mathcal{M}}\big((\partial_j u_\varepsilon)(\cdot,0)\big)\big\|_{L^p({\mathbb{R}}^{n-1})}
\leq C\big\|(\partial_j u_\varepsilon)(\cdot,0)\big\|_{L^p({\mathbb{R}}^{n-1})}
\nonumber\\[6pt]
&\leq C\|{\mathcal{A}}_{\kappa}(\partial_j\partial_n u_\varepsilon)\|_{L^p({\mathbb{R}}^{n-1})}
\nonumber\\[6pt]
&\leq C\|{\mathcal{A}}_{\kappa}\big(\Theta^j (\partial_n u_\varepsilon)(\cdot,0)\big)
\|_{L^p({\mathbb{R}}^{n-1})}
\nonumber\\[6pt]
&\leq C\|(\partial_n u_\varepsilon)(\cdot,0)\|_{L^p({\mathbb{R}}^{n-1})}
=C\|(\partial_n u)(\cdot,\varepsilon)\|_{L^p({\mathbb{R}}^{n-1})}
\nonumber\\[6pt]
&\leq C\|{\mathcal{N}}_{\kappa}(\partial_n u)\|_{L^p({\mathbb{R}}^{n-1})}.
\end{align}
For the first inequality in \eqref{eq:iFD} we used \eqref{eq:Temp1} and part {\it (6)}
in Theorem~\ref{kkjbhV}, the second inequality is based on the $L^p$ boundedness of
the Hardy-Littlewood maximal operator, the third is just \eqref{jdywtg},
the fourth is just \eqref{eq:Temp2}, the fifth is a consequence of \eqref{eq:nbdu},
the sixth follows from \eqref{jxfrA-2}, while the last one is seen from \eqref{NT-Fct}.
Next, note that
\begin{align}\label{exTGF-DFF}
{\mathcal{N}}_{\kappa}(\partial_j u_\varepsilon)(x')
&=\sup_{|x'-y'|<\kappa t}\big|(\partial_j u_\varepsilon)(y',t)\big|
=\sup_{(y',t)\in\Gamma_\kappa(x')}\big|(\partial_j u)(y',t+\varepsilon)\big|
\nonumber\\[6pt]
&=\sup_{(y',t)\in\Gamma_\kappa(x')+\varepsilon{\bf e}_n}\big|(\partial_j u)(y',t)\big|
\nearrow\sup_{(y',t)\in\Gamma_\kappa(x')}\big|(\partial_j u)(y',t)\big|
\,\,\mbox{ as }\,\,\varepsilon\searrow 0,
\end{align}
hence
\begin{align}\label{exTGF-DFF.2}
{\mathcal{N}}_{\kappa}(\partial_j u_\varepsilon)(x')
\nearrow{\mathcal{N}}_{\kappa}(\partial_j u)(x')\,\,\mbox{ as }\,\,\varepsilon\searrow 0,
\end{align}
for each $x'\in{\mathbb{R}}^{n-1}$. Combining \eqref{exTGF-DFF.2} and \eqref{eq:iFD}
and relying on Lebesgue's Monotone Convergence Theorem then yields
\begin{align}\label{eq:iFD.556}
\|{\mathcal{N}}_{\kappa}(\partial_j u)\|_{L^p({\mathbb{R}}^{n-1})}
=\lim_{\varepsilon\searrow 0}
\|{\mathcal{N}}_{\kappa}(\partial_j u_\varepsilon)\|_{L^p({\mathbb{R}}^{n-1})}
\leq C\|{\mathcal{N}}_{\kappa}(\partial_n u)\|_{L^p({\mathbb{R}}^{n-1})},
\end{align}
for some finite constant $C>0$ independent of $u$.
This shows that \eqref{eq:jufds.22} improves to
\begin{equation}\label{eq:jufds.26}
{\mathcal{N}}_\kappa(\nabla u)\in L^p({\mathbb{R}}^{n-1}).
\end{equation}
As a consequence of \eqref{eq:jufds.26} and
Theorem~\ref{tuFatou} we have
\begin{equation}\label{eufds.26fff}
(\nabla u)\big|^{{}^{\rm n.t.}}_{\partial{\mathbb{R}}^n_{+}}
\,\,\mbox{ exists and belongs to }\,\,L^p({\mathbb{R}}^{n-1}).
\end{equation}

Next, fix some two arbitrary indices, $j\in\{1,\dots,n-1\}$ and $\alpha\in\{1,\dots,M\}$, along with some
scalar-valued test function $\psi\in{\mathscr{C}}^\infty_0({\mathbb{R}}^{n-1})$. Extend $\psi$
to some $\widetilde{\psi}\in{\mathscr{C}}^\infty_0({\mathbb{R}}^{n})$ and, with $u_\alpha$ denoting
the $\alpha$-th component of $u$, consider the vector field
\begin{equation}\label{eq:Rgfab}
\vec{F}:=\partial_j\big(\widetilde{\psi}\,u_\alpha\big){\bf e}_n
-\partial_n\big(\widetilde{\psi}\,u_\alpha\big){\bf e}_j\in{\mathscr{C}}^\infty({\mathbb{R}}^n_{+},{\mathbb{C}}^n).
\end{equation}
From the design of $\vec{F}$ we see that
\begin{equation}\label{eq:ndjdf}
\vec{F}\in L^1_{\rm loc}({\mathbb{R}}^n_{+},{\mathbb{C}}^n),\qquad
{\rm div}\,\vec{F}=0\,\,\mbox{ in }\,\,{\mathbb{R}}^n_{+}.
\end{equation}
Also, from \eqref{KJHab}, \eqref{eq:jufds.26}, and \eqref{eq:Rgfab} we obtain
\begin{equation}\label{eq:ndjHG}
\mathcal{N}_\kappa\vec{F}\leq\big\|\widetilde{\psi}\big\|_{L^\infty({\mathbb{R}}^n)}
{\bf 1}_{K}\,{\mathcal{N}}_\kappa(\nabla u)
+\big\|\nabla\widetilde{\psi}\big\|_{L^\infty({\mathbb{R}}^n)}
{\bf 1}_{K}\,{\mathcal{N}}_\kappa u\in L^1({\mathbb{R}}^{n-1}),
\end{equation}
where $K$ is the compact subset of ${\mathbb{R}}^{n-1}$ given by
\begin{equation}\label{eq:Nva}
K:=\big\{x'\in{\mathbb{R}}^{n-1}:\,\overline{\Gamma_\kappa(x')}
\cap{\rm supp}\,\widetilde{\psi}\not=\emptyset\big\}.
\end{equation}
Keeping in mind that $u_\alpha\big|^{{}^{\rm n.t.}}_{\partial{\mathbb{R}}^n_{+}}=f_\alpha$,
the $\alpha$-th component of $f$ (cf. \eqref{KJHab}), it follows from \eqref{eq:Rgfab} and
\eqref{eufds.26fff} that $\vec{F}\big|^{{}^{\rm n.t.}}_{\partial{\mathbb{R}}^n_{+}}$
exists and is given by
\begin{align}\label{eq:ndjdf.2}
\vec{F}\big|^{{}^{\rm n.t.}}_{\partial{\mathbb{R}}^n_{+}}
&=\Big\{\Big((\partial_ju_\alpha)\big|^{{}^{\rm n.t.}}_{\partial{\mathbb{R}}^n_{+}}\Big)\psi
+f_\alpha\,\partial_j\psi\Big\}\,{\bf e}_n
\nonumber\\[4pt]
&\quad +\Big\{\Big((\partial_n u_\alpha)\big|^{{}^{\rm n.t.}}_{\partial{\mathbb{R}}^n_{+}}\Big)\psi
+f_\alpha\,\big(\partial_n\widetilde{\psi}\,\big)\big|_{\partial{\mathbb{R}}^n_{+}}
\Big\}\,{\bf e}_j\,\,\mbox{ a.e. in }\,\,{\mathbb{R}}^{n-1}.
\end{align}
Granted \eqref{eq:ndjdf}, \eqref{eq:ndjHG}, and \eqref{eq:ndjdf.2},
Theorem~\ref{theor:div-thm} then gives
\begin{equation}\label{eq:IIIa}
\int_{{\mathbb{R}}^{n-1}}f_\alpha\,\partial_j\psi\,d{\mathscr{L}}^{n-1}
=-\int_{{\mathbb{R}}^{n-1}}\Big((\partial_ju_\alpha)\big|^{{}^{\rm n.t.}}_{\partial{\mathbb{R}}^n_{+}}\Big)\psi\,d{\mathscr{L}}^{n-1},
\qquad\forall\,\psi\in{\mathscr{C}}^\infty_0({\mathbb{R}}^{n-1}).
\end{equation}
Together with \eqref{eufds.26fff}, this ultimately proves that,
in the sense of distributions in ${\mathbb{R}}^{n-1}$,
\begin{equation}\label{eq:IIIb}
\partial_j f=(\partial_ju)\big|^{{}^{\rm n.t.}}_{\partial{\mathbb{R}}^n_{+}}
\in L^p({\mathbb{R}}^{n-1}),\,\,\,\mbox{ for each }\,\,j\in\{1,\dots,n-1\}.
\end{equation}
Consequently, $f\in L^p_1({\mathbb{R}}^{n-1})$. When combined with
\eqref{KJHab} and \eqref{eq:jufds.26}, this shows that $u$ is a solution of
the Regularity problem \eqref{Dir-BVvku} with boundary datum $f$.
The argument so far proves the left-to-right inclusion in \eqref{eq:tfc.1-new}.

Let us also note here that from the well-posedness of the $L^p$-Dirichlet boundary value problem
(cf. Theorem~\ref{Theorem-NiceDP}) as well as \eqref{KJHab} and \eqref{eq:jufds.26},
it follows that $u$ given by \eqref{eq:FDww} is the unique solution of the Regularity
problem \eqref{Dir-BVvku} with boundary datum $f$.
With this in hand, \eqref{eq:FDww.2} and \eqref{KBvcac65f5}-\eqref{eq:JfREDv}
then justify \eqref{eq:hJB} and \eqref{mJBVV}.

To prove the right-to-left inclusion in \eqref{eq:tfc.1-new}, consider a function
$f\in L^p_1({\mathbb{R}}^{n-1})$ for which the Regularity problem \eqref{Dir-BVvku}
with boundary datum $f$ has a solution $u$. From Theorem~\ref{Theorem-NiceDP} it
follows that, necessarily, $u(x',t)=(P^L_t\ast f)(x')=\big(T(t)f\big)(x')$
for all $(x',t)\in{\mathbb{R}}^n_{+}$.
Since the boundary condition in \eqref{Dir-BVvku} entails
$\lim\limits_{t\to 0^{+}}u(\cdot,t)=f$ a.e.~in ${\mathbb{R}}^{n-1}$, we may write
\begin{equation}\label{eq:hds66}
\frac{T(t)f-f}{t}=\frac{u(\cdot,t)-f}{t}
=\meanint_{\!\!\!0}^{\,t}\,\,(\partial_n u)(\cdot,s)\,ds
\quad\mbox{a.e.~in ${\mathbb{R}}^{n-1}$}.
\end{equation}
In addition,
\begin{equation}\label{eq:hds67}
\Big|\meanint_{\!\!\!0}^{\,t}\,\,(\partial_n u)(\cdot,s)\,ds\Big|
\leq{\mathcal{N}}_\kappa(\partial_n u)\in L^p({\mathbb{R}}^{n-1}),\qquad\forall\,t>0.
\end{equation}
Furthermore, Theorem~\ref{tuFatou} gives that there exists $N\subset{\mathbb{R}}^{n-1}$
with ${\mathscr{L}}^{n-1}(N)=0$ and such that
\begin{eqnarray}\label{TaJBb}
\Big((\partial_n u)\big|^{{}^{\rm n.t.}}_{\partial{\mathbb{R}}^n_{+}}\Big)(x')
=\lim_{\Gamma_\kappa(x')\ni y\to(x',0)}(\partial_n u)(y)
\,\,\mbox{ exists for each }\,\,x'\in{\mathbb{R}}^{n-1}\setminus N.
\end{eqnarray}
In turn, this readily entails
\begin{equation}\label{eq:hds68}
\lim_{t\to 0^{+}}\meanint_{\!\!\!0}^{\,t}\,\,(\partial_n u)(x',s)\,ds
=\Big((\partial_n u)\big|^{{}^{\rm n.t.}}_{\partial{\mathbb{R}}^n_{+}}\Big)(x')\,\,\,
\mbox{ for each }\,\,x'\in{\mathbb{R}}^{n-1}\setminus N.
\end{equation}
From \eqref{eq:hds66}-\eqref{eq:hds68} and Lebesgue's Dominated
Convergence Theorem it follows that
\begin{equation}\label{eq:hds69}
\lim_{t\to 0^{+}}\frac{T(t)f-f}{t}
=(\partial_n u)\big|^{{}^{\rm n.t.}}_{\partial{\mathbb{R}}^n_{+}}
\,\,\,\mbox{ in }\,\,L^p({\mathbb{R}}^{n-1}).
\end{equation}
The bottom line is that $f\in D({\mathbf{A}})$, finishing
the proof of the right-to-left inclusion in \eqref{eq:tfc.1-new}.
At this stage, \eqref{eq:tfc.1-new} is established.

Consider now the claim made in \eqref{eq:tfc.1-new.RRR}. Pick an arbitrary
$f\in L^p_1({\mathbb{R}}^{n-1})$ and define $u$ as in \eqref{eq:FDww}.
Also, for each $\varepsilon>0$ introduce $u_\varepsilon$ as in \eqref{jxfrA-2}
and set
\begin{equation}\label{eq:vrcc}
f_\varepsilon:=u_\varepsilon\big|_{\partial{\mathbb{R}}^n_{+}}=u(\cdot,\varepsilon).
\end{equation}
By \eqref{jxfrA}, this is a well-defined function in ${\mathbb{R}}^{n-1}$ and, since
\begin{equation}\label{eq:pht6}
\mbox{$|f_\varepsilon|\leq{\mathcal{N}}_\kappa u$\,\,\, pointwise in\,\,\,${\mathbb{R}}^{n-1}$},
\end{equation}
it follows from \eqref{KJHab} that $f_\varepsilon\in L^p({\mathbb{R}}^{n-1})$.
Moreover, for each $j\in\{1,\dots,n-1\}$ we have
$\partial_j f_\varepsilon=(\partial_j u)(\cdot,\varepsilon)$ which, in view of
\eqref{elsiWth.UU3b} with $k=1$, forces $\partial_j f_\varepsilon\in L^p({\mathbb{R}}^{n-1})$.
The upshot of this analysis is that $f_\varepsilon\in L^p_1({\mathbb{R}}^{n-1})$.
In addition, from \eqref{jxfrA}, \eqref{elsiWth.UU3b} with $k=1$, and \eqref{eq:vrcc}
we see that $u_\varepsilon$ solves $(R_p)$ with boundary datum $f_\varepsilon$.
As such, \eqref{eq:tfc.1-new.RRR} will follow from \eqref{eq:tfc.1-new} as soon
as we show that
\begin{equation}\label{eq:KIgt5c}
f_\varepsilon\to f\,\,\mbox{ in }\,\,L^p_1({\mathbb{R}}^{n-1})
\,\,\mbox{ as }\,\,\varepsilon\to 0^{+}.
\end{equation}
With this in mind, we first observe that
$f_\varepsilon=u(\cdot,\varepsilon)\to u\big|^{{}^{\rm n.t.}}_{\partial{\mathbb{R}}^n_{+}}=f$
pointwise a.e.~in ${\mathbb{R}}^{n-1}$ as $\varepsilon\to 0^{+}$, by \eqref{eq:vrcc}
and \eqref{KJHab}. Together with \eqref{eq:pht6} and Lebesgue's Dominated Convergence Theorem
this proves that $f_\varepsilon\to f$ in $L^p({\mathbb{R}}^{n-1})$ as $\varepsilon\to 0^{+}$.
Granted this, there remains to observe that for each $j\in\{1,\dots,n-1\}$ we have
\begin{align}\label{i6tr}
\partial_j f_\varepsilon &=(\partial_j u)(\cdot,\varepsilon)
=\partial_j\big[P^L_\varepsilon\ast f\big]
\nonumber\\[4pt]
&=P^L_\varepsilon\ast(\partial_j f)\to \partial_j f
\,\,\mbox{ in }\,\,L^p({\mathbb{R}}^{n-1})
\,\,\mbox{ as }\,\,\varepsilon\to 0^{+}
\end{align}
where, for the last line, we have relied on parts {\it (6)}-{\it (7)} of Theorem~\ref{kkjbhV},
Lebesgue's Dominated Convergence Theorem, and the $L^p$-boundedness of the Hardy-Littlewood
maximal operator. This ultimately shows that \eqref{eq:KIgt5c} holds, finishing the proof of
\eqref{eq:tfc.1-new.RRR}.

Our next goal is to show that
\begin{equation}\label{eq:RPWP}
\mbox{\eqref{eq:Asd.FFF} implies that $(R_p)$ is well-posed}.
\end{equation}
For the remainder of the proof make the additional assumption that the system $L$
has the property that \eqref{eq:Asd.FFF} holds. Pick some
$f\in L^p_1({\mathbb{R}}^{n-1})$ and define $u$ as in \eqref{eq:FDww}.
In particular, $u$ satisfies \eqref{KJHab} and we also claim that there exists
a finite constant $C>0$, independent of $f$, such that
\begin{equation}\label{eq:hds65}
\begin{array}{c}
{\mathcal{N}}_\kappa(\partial_j u)\in L^p({\mathbb{R}}^{n-1})\,\,\mbox{ and }\,\,
\big\|{\mathcal{N}}_\kappa(\partial_j u)\big\|_{L^p({\mathbb{R}}^{n-1})}
\leq C\|f\|_{L^p_1({\mathbb{R}}^{n-1})}
\\[4pt]
\mbox{for each }\,\,\,j\in\{1,\dots,n\}.
\end{array}
\end{equation}
To justify this claim, let $E=\big(E_{\gamma\beta}\big)_{1\leq\gamma,\beta\leq M}$
be the fundamental solution for $L$ from Theorem~\ref{FS-prop}. Invoking \eqref{yfg-yLUU}
in Proposition~\ref{uniq:double->!Poisson}, then \eqref{fs-GLOB}, \eqref{E-Trans}
and then integration by parts, for each $\gamma\in\{1,...,M\}$ and
$(x',t)\in{\mathbb{R}}^n_{+}$ we may write
\begin{align}\label{yncuds-2}
(\partial_n u_\gamma)(x',t)
&=\frac{d}{dt}\Big[\big((P^L_{\gamma\alpha})_t\ast f_\alpha\big)(x')\Big]
=\frac{d}{dt}\int_{{\mathbb{R}}^{n-1}}K^L_{\gamma\alpha}(x'-y',t)f_\alpha(y')\,dy'
\nonumber\\[4pt]
&=2\int_{{\mathbb{R}}^{n-1}}a^{\beta\alpha}_{rn}(\partial_r\partial_n E_{\gamma\beta})(x'-y',t)
f_\alpha(y')\,dy'
\nonumber\\[4pt]
&=-2\sum\limits_{s=1}^{n-1}
\int_{{\mathbb{R}}^{n-1}}a^{\beta\alpha}_{rs}(\partial_r\partial_s E_{\gamma\beta})(x'-y',t)
f_\alpha(y')\,dy'
\nonumber\\[4pt]
&=-2\sum\limits_{s=1}^{n-1}
\int_{{\mathbb{R}}^{n-1}}a^{\beta\alpha}_{rs}(\partial_r E_{\gamma\beta})(x'-y',t)
(\partial_sf_\alpha)(y')\,dy'
\end{align}
(note that the summation in $s$ is only from $s=1$ to $s=n-1$, thus the
summation convention may not be used for this sub-index in \eqref{yncuds-2}).
We intend to apply Theorem~\ref{Main-T2} to \eqref{yncuds-2} in order to
take care of the claims made in \eqref{eq:hds65} in the case when $j=n$.
To do so, for each $s\in\{1,\dots,n-1\}$, and each $\alpha,\gamma\in\{1,\dots,M\}$,
define
\begin{equation}\label{eq:bdue}
K_{\gamma\alpha}^s(x):=a^{\beta\alpha}_{rs}(\partial_r E_{\gamma\beta})(x),
\qquad\forall\,x\in{\mathbb{R}}^n\setminus\{0\},
\end{equation}
and denote by ${\mathcal{T}}_{\gamma\alpha}^s$ the integral operator
as in \eqref{T-layer} associated to the kernel $K_{\gamma\alpha}^s$.
In this notation, \eqref{yncuds-2} becomes
\begin{eqnarray}\label{bsyus56}
\partial_n u
=\left(-2\sum\limits_{s=1}^{n-1}{\mathcal{T}}_{\gamma\alpha}^s(\partial_sf_\alpha)
\right)_{1\leq\gamma\leq M}\,\,\,\mbox{ in }\,\,\,{\mathbb{R}}^n_{+}.
\end{eqnarray}
Since the kernels $K_{\gamma\alpha}^s$ satisfy \eqref{ker} (as seen from
\eqref{eq:bdue} and the properties of $E$), Theorem~\ref{Main-T2} gives
that ${\mathcal{N}}_\kappa(\partial_n u)\in L^p({\mathbb{R}}^{n-1})$ and
$\big\|{\mathcal{N}}_\kappa(\partial_n u)\big\|_{L^p({\mathbb{R}}^{n-1})}
\leq C\|f\|_{L^p_1({\mathbb{R}}^{n-1})}$, for some $C\in(0,\infty)$ independent of $f$.
In the scenario when $j\in\{1,\dots,n-1\}$, the claims in \eqref{eq:hds65}
are more directly seen by writing
\begin{align}\label{eq:efgg}
(\partial_j u)(x',t) & =\frac{\partial}{\partial x_j}
\Big[\big(P^L_t\ast f\big)(x')\Big]
=\frac{\partial}{\partial x_j}\int_{{\mathbb{R}}^{n-1}}K^L(x'-y',t)f(y')\,dy'
\nonumber\\[4pt]
&=-\int_{{\mathbb{R}}^{n-1}}\partial_{y_j}\big[K^L(x'-y',t)\big]f(y')\,dy'
=\int_{{\mathbb{R}}^{n-1}}K^L(x'-y',t)(\partial_jf)(y')\,dy'
\nonumber\\[4pt]
&=\big(P^L_t\ast(\partial_jf)\big)(x'),\qquad
\forall\,(x',t)\in{\mathbb{R}}^n_{+},
\end{align}
then invoking part {\it (6)} in Theorem~\ref{kkjbhV}. Having proved \eqref{eq:hds65},
it follows that $u$ solves $(R_p)$ for the boundary datum $f$, and obeys natural
estimates. Since uniqueness for $(R_p)$ also holds (implied by uniqueness for $(D_p)$;
cf. Theorem~\ref{Theorem-NiceDP}), we conclude that \eqref{eq:RPWP} holds.
In light of \eqref{eq:tfc.2BB}, this further implies that
$D({\mathbf{A}})=L^p_1({\mathbb{R}}^{n-1})$ in the class of systems we are
currently considering. At this stage, there remains to establish \eqref{mdiab}, a task
to which we now turn. In the context of \eqref{bsyus56}, the jump-formula from
Theorem~\ref{Main-T2} (see also \cite[Theorem~11.11, pp.\,366-367]{DM}) gives that,
for each $\gamma\in\{1,\dots,M\}$ and at a.e. $x'\in{\mathbb{R}}^{n-1}$, we have
\begin{align}\label{nhdsiswu}
\Bigl((\partial_n u_\gamma)\Big|_{\partial{\mathbb{R}}^n_{+}}^{{}^{\rm n.t.}}\Big)(x')
&=
-2\sum\limits_{s=1}^{n-1}
\tfrac{1}{2i}\,\widehat{K_{\gamma\alpha}^s}(-{\bf e}_n)(\partial_sf_{\alpha})(x')
-
2\sum\limits_{s=1}^{n-1}T_{\gamma\alpha}^s(\partial_sf_{\alpha})(x')
\nonumber\\[4pt]
&=
-\sum\limits_{s=1}^{n-1}
(B^{-1})_{\gamma\beta}\,a^{\beta\alpha}_{ns}\,(\partial_sf_{\alpha})(x')
-2\sum\limits_{s=1}^{n-1}T_{\gamma\alpha}^s(\partial_sf_{\alpha})(x').
\end{align}
Above, each $T_{\gamma\alpha}^s$ is the principal value operator associated
with $K_{\gamma\alpha}^s$ as in \eqref{main-lim} and we have set
$B:=\big(a^{\alpha\beta}_{nn}\big)_{1\leq\alpha,\beta\leq M}\in{\mathbb{C}}^{M\times M}$.
For the last equality in \eqref{nhdsiswu} we used \eqref{eq:bdue} and \eqref{E-ftXC}.
Now \eqref{mdiab} follows from \eqref{mJBVV} and \eqref{nhdsiswu}.
The proof of Theorem~\ref{V-Naa.11} is therefore complete.
\end{proof}

We now turn to the proof of Corollary~\ref{yreeCCC}.

\vskip 0.08in
\begin{proof}[Proof of Corollary~\ref{yreeCCC}]
Suppose $L={\rm div}{A}\,\nabla$ is an elliptic scalar operator
with complex coefficients, ${A}=(a_{rs})_{1\leq r,s\leq n}\in{\mathbb{C}}^{n\times n}$.
Then the symmetric part of the coefficient matrix ${A}$, defined by
${A}_{\rm sym}:=\frac{1}{2}({A}+{A}^t)$, satisfies conditions
\eqref{Ea4-fCii-n3}-\eqref{Ea4-fCii-n2B} (see \cite[Remark~3.8, p.132]{EE-MMMM}).
From Proposition~\ref{UaalpIKL-PP} and Theorem~\ref{V-Naa.11} it is then immediate that
the infinitesimal generator for the Poisson semigroup in $L^p({\mathbb{R}}^{n-1})$
associated with $L$ as in Theorem~\ref{VCXga} has domain $L^p_1({\mathbb{R}}^{n-1})$.

Proposition~\ref{UaalpIKL-PP} also guarantees that ${A}_{\rm sym}$ does the job in
\eqref{eq:Asd.FFF}. As such, given some $f\in L^p_1({\mathbb{R}}^{n-1})$,
starting with \eqref{mdiab} in which ${A}_{\rm sym}$ is used in the writing of $L$,
for a.e. $x'\in{\mathbb{R}}^{n-1}$ we obtain
\begin{align}\label{mdiab-scalarhgst}
({\mathbf{A}}f)(x')&
=
-\sum\limits_{s=1}^{n-1}\frac{a_{ns}+a_{sn}}{2a_{nn}}\,(\partial_sf)(x')
\nonumber\\[4pt]
&\qquad
-2\sum\limits_{s=1}^{n-1}({A}_{\rm sym})_{rs}\lim_{\varepsilon\to0^{+}}
\int\limits_{\substack{y'\in{\mathbb{R}}^{n-1}\\ |x'-y'|>\varepsilon}}
(\partial_r E)(x'-y',0)(\partial_sf)(y')\,dy',
\end{align}
where $E$ is the fundamental solution for the operator $L$ from \eqref{YTcxb-ytSH}.
Furthermore, based on the explicit description in \eqref{YTcxb-ytSH}, for each
$x=(x_1,\dots,x_n)\in{\mathbb{R}}^n\setminus\{0\}$ and each $s\in\{1,\dots,n-1\}$, we write
\begin{align}\label{mditt}
({A}_{\rm sym})_{rs}(\partial_r E)(x)
&=({A}_{\rm sym})_{rs}\frac{1}{\omega_{n-1}\sqrt{{\rm det}\,({A}_{\rm sym})}}
\Big[\big(({A}_{\rm sym})^{-1}x\big)\cdot x\Big]^{\frac{-n}{2}}
\big(({A}_{\rm sym})^{-1}x\big)_r
\nonumber\\[4pt]
&=\frac{x_s}{\omega_{n-1}\sqrt{{\rm det}\,({A}_{\rm sym})}
\Big[\big(({A}_{\rm sym})^{-1}x\big)\cdot x\Big]^{\frac{n}{2}}}.
\end{align}
Now \eqref{mdiab-scalar} follows by combining \eqref{mdiab-scalarhgst} and \eqref{mditt}.
Of course, \eqref{mdiab-Laplacian} is a particular case of \eqref{mdiab-scalar}, while
\eqref{mdiab-Lap:LL} is seen from \eqref{mdiab-Laplacian.2} by working on the Fourier transform side:
\begin{equation}\label{eq:Tree}
\widehat{{\mathbf A}f}(\xi')
=
-\sum_{s=1}^{n-1}\widehat{R_s(\partial_sf)}(\xi')
=
-\sum_{s=1}^{n-1}-i\frac{\xi_s}{|\xi'|}\big(i\xi_s\widehat{f}(\xi')\big)
=
-|\xi'|\widehat{f}(\xi'),
\end{equation}
in agreement with the Fourier transform of $-\sqrt{-\Delta_{n-1}}f$.

Next, we consider the case when $L$ is the Lam\'e system \eqref{TYd-YG-76g} with
Lam\'e moduli as in \eqref{Yfhv-8yg}. Recall from \cite{MaMiMi}, \cite{MaMiMiMi}
that the coefficient tensor with entries given by
\begin{equation}\label{BUIg-17XX}
\begin{array}{c}
a^{\beta\alpha}_{rs}:=\mu\delta_{rs}\delta_{\alpha\beta}
+\frac{(\lambda+\mu)(2\mu+\lambda)}{3\mu+\lambda}\delta_{r\beta}\delta_{s\alpha}
+\frac{\mu(\lambda+\mu)}{3\mu+\lambda}\delta_{r\alpha}\delta_{s\beta},
\\[8pt]
\quad\mbox{for each }\,\,\,\alpha,\beta,r,s\in\{1,\dots,n\},
\end{array}
\end{equation}
satisfies conditions \eqref{Ea4-fCii-n3}-\eqref{Ea4-fCii-n2B}.
Hence, by Proposition~\ref{UaalpIKL-PP}, this coefficient tensor also does the job in
\eqref{eq:Asd.FFF}. Granted this, Theorem~\ref{V-Naa.11} applies and gives that
the infinitesimal generator for the Poisson semigroup in $L^p({\mathbb{R}}^{n-1})$
associated with the Lam\'e system as in Theorem~\ref{VCXga} has domain $L^p_1({\mathbb{R}}^{n-1})$.

Moving on, observe that (cf. \cite{DM} for details), when specialized
to the case of the Lam\'e system, the general fundamental solution described in
Theorem~\ref{FS-prop} becomes the matrix $E=(E_{\alpha\beta})_{1\leq\alpha,\beta\leq n}$
whose $(\alpha,\beta)$ entry is defined at each
$x=(x_1,...,x_n)\in{\mathbb{R}}^{n}\setminus\{0\}$ according to
\begin{eqnarray}\label{Lame-6}
E_{\alpha\beta}(x):=\left\{
\begin{array}{ll}
{\displaystyle{\frac{-1}{2\mu(2\mu+\lambda)\omega_{n-1}}
\left[\frac{3\mu+\lambda}{n-2}\frac{\delta_{\alpha\beta}}{|x|^{n-2}}
+\frac{(\mu+\lambda)x_\alpha x_\beta}{|x|^{n}}\right],}} & \mbox{ if }\,n\geq 3,
\\[18pt]
{\displaystyle{\frac{1}{4\pi\mu(2\mu+\lambda)}
\left[(3\mu+\lambda)\delta_{\alpha\beta}{\rm ln}\,|x|
-\frac{(\mu+\lambda)x_\alpha x_\beta}{|x|^{2}}\right],}} & \mbox{ if }\,n=2.
\end{array}
\right.
\end{eqnarray}
Fix now some $f=(f_\alpha)_{1\leq\alpha\leq n}\in L^p_1({\mathbb{R}}^{n-1})$.
Using the expression for the coefficient tensors from \eqref{BUIg-17XX} we obtain
\begin{equation}\label{eq:sq3}
\Big(\big[\big(a^{\sigma\tau}_{nn}\big)_{1\leq\sigma,\tau\leq M}\big]^{-1}
\Big)_{\gamma\beta}=\frac{1}{\mu}\delta_{\gamma\beta}
-\frac{\mu+\lambda}{\mu(2\mu+\lambda)}\delta_{n\gamma}\delta_{n\beta},
\qquad\forall\,\gamma,\beta\in\{1,\dots,n\},
\end{equation}
so that
\begin{align}\label{mdiab-csf}
&
\left(\sum\limits_{s=1}^{n-1}
\Big(\big[\big(a^{\sigma\tau}_{nn}\big)_{1\leq\sigma,\tau\leq M}\big]^{-1}
\Big)_{\gamma\beta}\,a^{\beta\alpha}_{ns}\,(\partial_sf_{\alpha})(x')\right)_{1\leq\gamma\leq n}
\nonumber\\[4pt]
&\qquad
=\left(\sum_{s=1}^{n-1}\frac{\mu+\lambda}{3\mu+\lambda}\left[
\delta_{\gamma n}\delta_{s\alpha}+\delta_{n\alpha}\delta_{\gamma s}\right]
(\partial_sf_\alpha)(x')\right)_{1\leq\gamma\leq n}
\nonumber\\[4pt]
&\qquad
=\frac{\mu+\lambda}{3\mu+\lambda}
\left((\partial_1f_n)(x'),(\partial_2f_n)(x'),\dots,(\partial_{n-1}f_n)(x'),
\sum_{s=1}^{n-1}(\partial_sf_s)(x')\right).
\end{align}
We are left with computing the second sum in \eqref{mdiab} in the current setting.
To do so, use \eqref{Lame-6} to write for each $\beta,\gamma,r\in\{1,\dots,n\}$,
\begin{align}\label{n-uwuqi}
\partial_r E_{\gamma\beta}(x)&=\frac{-1}{2\mu(2\mu+\lambda)\omega_{n-1}}
\Big[-(3\mu+\lambda)\delta_{\gamma\beta}\frac{x_r}{|x|^n}
+(\mu+\lambda)\delta_{r\gamma}\frac{x_\beta}{|x|^n}
\\[10pt]
&\hskip 1.50in
+(\mu+\lambda)\delta_{r\beta}\frac{x_\gamma}{|x|^n}
-n(\mu+\lambda)\frac{x_\gamma x_r x_\beta}{|x|^{n+2}}\Big],
\nonumber
\end{align}
for every $x\in{\mathbb{R}}^n\setminus\{0\}$. Then a direct computation based on
\eqref{BUIg-17XX} and \eqref{n-uwuqi} yields
\begin{align}\label{mdibdG}
&2\sum\limits_{s=1}^{n-1}a^{\beta\alpha}_{rs}
\lim_{\varepsilon\to 0^{+}}
\int\limits_{\substack{y'\in{\mathbb{R}}^{n-1}\\ |x'-y'|>\varepsilon}}
(\partial_r E_{\gamma\beta})(x'-y',0)(\partial_sf_\alpha)(y')\,dy'
\nonumber\\[4pt]
&\qquad
=\frac{4\mu}{(3\mu+\lambda)\omega_{n-1}}\sum\limits_{s=1}^{n-1}
\lim_{\varepsilon\to 0^{+}}
\int\limits_{\substack{y'\in{\mathbb{R}}^{n-1}\\ |x'-y'|>\varepsilon}}
\frac{x_s-y_s}{|x'-y'|^n}\,(\partial_sf_\gamma)(y')\,dy'
\\[4pt]
&\qquad\quad
+\frac{2n(\mu+\lambda)}{(3\mu+\lambda)\omega_{n-1}}\sum\limits_{s=1}^{n-1}
\lim_{\varepsilon\to 0^{+}}
\int\limits_{\substack{y'\in{\mathbb{R}}^{n-1}\\ |x'-y'|>\varepsilon}}
\frac{(x_s-y_s)(x'-y',0)_\alpha(x'-y',0)_\gamma}{|x'-y'|^{n+2}}\,
(\partial_sf_\alpha)(y')\,dy'
\nonumber
\end{align}
for each $\gamma\in\{1,\dots,n\}$ and a.e. $x'=(x_1,\dots,x_{n-1})\in{\mathbb{R}}^{n-1}$.
Now \eqref{mdiab-Lame} follows from \eqref{mdiab}, \eqref{mdiab-csf}, and \eqref{mdibdG}.
\end{proof}

\section{Further results}
\setcounter{equation}{0}
\label{S-4}

As is apparent from the statement of Theorem~\ref{V-Naa.11}, the solvability of the
Regularity problem $(R_p)$ (formulated in \eqref{Dir-BVvku}) interfaces tightly with
the infinitesimal generator of the Poisson semigroup considered in Theorem~\ref{VCXga}.
In this regard, we have already seen in \eqref{eq:RPWP} that $(R_p)$ is well-posed
for any system $L$ as in \eqref{L-def}-\eqref{L-ell.X} with the property that
\eqref{eq:Asd.FFF} holds. Here we wish to further improve on this result, by considering
a weaker condition in lieu of \eqref{eq:Asd.FFF} which we now describe. Let
\begin{equation}\label{RRR-UpHs.33}
{\mathbb{R}}^{n}_{-}:=\big\{x=(x',x_n)\in
{\mathbb{R}}^{n}={\mathbb{R}}^{n-1}\times{\mathbb{R}}:\,x_n<0\big\}
\end{equation}
denote the lower-half space. Given a system $L$ as in \eqref{L-def}-\eqref{L-ell.X},
let $P^L$ be the Poisson kernel for $L$ in $\mathbb{R}^{n}_{+}$ as in Theorem~\ref{kkjbhV}.
Then, for each $j\in\{1,\dots,n-1\}$ introduce the {\tt conjugate} {\tt Poisson} {\tt kernels}
\begin{equation}\label{eq:QWd7}
Q^L_j(x'):=x_jP^L(x'),\qquad\forall\,x'=(x_1,\dots,x_{n-1})\in{\mathbb{R}}^{n-1},
\end{equation}
and, as usual, set $(Q^L_j)_t(x'):=t^{1-n}Q^L_j(x'/t)$ for each $x'\in{\mathbb{R}}^{n-1}$ and $t>0$.
Finally, for each $j\in\{1,\dots,n-1\}$ define the {\tt conjugate} {\tt kernel} {\tt function}
$K_j\in{\mathscr{C}}^\infty({\mathbb{R}}^n_{+}\cup{\mathbb{R}}^n_{-})$ by extending
the mapping ${\mathbb{R}}^n_{+}\ni(x',t)\mapsto (Q^L_j)_t(x')$ to ${\mathbb{R}}^n_{-}$ so that
the resulting function is odd in ${\mathbb{R}}^n_{+}\cup{\mathbb{R}}^n_{-}$. That is,
for each $j\in\{1,\dots,n-1\}$ we set
\begin{equation}\label{eq:QWd8}
K_j(x):=
\left\{
\begin{array}{ll}
(Q^L_j)_{t}(x') & \mbox{ for }\,x=(x',t)\in{\mathbb{R}}^n_{+},
\\[8pt]
-(Q^L_j)_{-t}(-x') & \mbox{ for }\,x=(x',t)\in{\mathbb{R}}^n_{-}.
\end{array}
\right.
\end{equation}
Then, in place of \eqref{eq:Asd.FFF}, we shall impose the regularity condition that
\begin{equation}\label{eq:QWd9}
\parbox{11.50cm}
{each $K_j$ extends to a function in ${\mathscr{C}}^\infty({\mathbb{R}}^n\setminus\{0\})$
(in which scenario, we will continue to use the same symbol $K_j$ for the said extension).}
\end{equation}
Note that $K_j$ is smooth both in ${\mathbb{R}}^n_{+}$ and ${\mathbb{R}}^n_{-}$,
so what \eqref{eq:QWd9} really demands is that these restrictions smoothly transit across
$\partial{\mathbb{R}}^n_{+}\setminus\{0\}$. As the examples below illustrate, this
condition is automatically satisfied for some natural classes of operators.

\vskip 0.08in
\noindent{\bf Example~1: Scalar second order elliptic equations.}
Assume that
\begin{equation}\label{YUjhv-753}
A=(a_{rs})_{r,s}\in{\mathbb{C}}^{n\times n}
\,\,\mbox{ satisfies }\,\,
\inf_{\xi=(\xi_j)_{{}_{1\leq j\leq n}}\in S^{n-1}}{\rm Re}\,\bigl[a_{rs}\xi_r\xi_s\bigr]>0,
\end{equation}
and denote by $A_{\rm sym}:=\tfrac{1}{2}\big(A+A^\top\big)$ the symmetric part of $A$.
From \cite{EE-MMMM} we know that the Poisson kernel in ${\mathbb{R}}^n_{+}$ for the operator
\begin{equation}\label{eq:LSCA}
L={\rm div}A\nabla
\end{equation}
is the function
\begin{equation}\label{Uahab8a-hab}
P^L(x')=\frac{2}{\omega_{n-1}\sqrt{{\rm det}\,(A_{\rm sym})}}
\frac{1}{\big[\big((A_{\rm sym})^{-1}(x',1)\big)\,\cdot\,(x',1)\big]^{\frac{n}{2}}},
\qquad\forall\,x'\in{\mathbb{R}}^{n-1}.
\end{equation}
It is reassuring to observe that, in the case when $A=I$ (i.e., when $L$ is the Laplacian),
\eqref{Uahab8a-hab} reduces precisely to the classical harmonic Poisson kernel
\begin{eqnarray}\label{Uah-TTT}
P^{\Delta}(x')=\frac{2}{\omega_{n-1}}\frac{1}{\big(1+|x'|^2\big)^{\frac{n}{2}}},
\qquad\forall\,x'\in{\mathbb{R}}^{n-1}.
\end{eqnarray}

Starting with \eqref{Uahab8a-hab}, a simple calculation reveals that the conjugate
kernel functions for the operator $L={\rm div}A\nabla$, originally defined
in ${\mathbb{R}}^n_{+}\cup{\mathbb{R}}^n_{-}$, extend to ${\mathbb{R}}^{n}\setminus\{0\}$
as ${\mathscr{C}}^\infty$ functions by the formula
\begin{equation}\label{UahJBB}
K_j(x)=\frac{2}{\omega_{n-1}\sqrt{{\rm det}\,(A_{\rm sym})}}
\frac{x_j}{\big[\big((A_{\rm sym})^{-1}x\big)\,\cdot x\big]^{\frac{n}{2}}},
\qquad\forall\,x=(x_1,\dots,x_n)\in{\mathbb{R}}^{n}\setminus\{0\},
\end{equation}
for each $j\in\{1,\dots,n\}$. As such, condition \eqref{eq:QWd9} is satisfied in the
class of operators $L$ as in \eqref{eq:LSCA} with coefficients as in \eqref{YUjhv-753}.

\vskip 0.08in
\noindent{\bf Example~2: The Lam\'e system of elasticity.}
Consider the Lam\'e operator in ${\mathbb{R}}^n$, written as in \eqref{TYd-YG-76g}
for Lam\'e moduli satisfying \eqref{Yfhv-8yg}. Then, as shown in \cite{EE-MMMM},
its Poisson kernel in ${\mathbb{R}}^n_{+}$ is the matrix-valued function
$P^{\,{\rm Lame}}=\big(P^{\,{\rm Lame}}_{\alpha\beta}\big)_{1\leq\alpha,\beta\leq n}:
{\mathbb{R}}^{n-1}\to{\mathbb{R}}^{n\times n}$ whose entries are given
for each $\alpha,\beta\in\{1,...,n\}$ and $x'\in{\mathbb{R}}^{n-1}$ by
\begin{equation}\label{Lame-9RR.L}
P^{\,{\rm Lame}}_{\alpha\beta}(x')=\frac{4\mu}{3\mu+\lambda}
\frac{\delta_{\alpha\beta}}{\omega_{n-1}}\frac{1}{(1+|x'|^2)^{\frac{n}{2}}}
+\frac{\mu+\lambda}{3\mu+\lambda}\frac{2n}{\omega_{n-1}}
\frac{(x',1)_\alpha(x',1)_\beta}{(1+|x'|^2)^{\frac{n+2}{2}}}.
\end{equation}
Starting this time from \eqref{Lame-9RR.L}, via a direct calculation it follows that the conjugate
kernel functions for the Lam\'e operator extend from ${\mathbb{R}}^n_{+}\cup{\mathbb{R}}^n_{-}$
to ${\mathbb{R}}^{n}\setminus\{0\}$ as ${\mathscr{C}}^\infty$ functions by the formula
\begin{align}\label{Uamndys}
&K_j(x)=\left(\frac{4\mu}{3\mu+\lambda}
\frac{\delta_{\alpha\beta}}{\omega_{n-1}}\frac{x_j}{|x|^n}
+\frac{\mu+\lambda}{3\mu+\lambda}\frac{2n}{\omega_{n-1}}
\frac{x_jx_\alpha x_\beta}{|x|^{n+2}}\right)_{1\leq\alpha,\beta\leq n}
\\[8pt]
&\forall\,x=(x_1,\dots,x_n)\in{\mathbb{R}}^{n}\setminus\{0\},
\qquad\forall\,j\in\{1,\dots,n-1\}.
\nonumber
\end{align}
Hence, condition \eqref{eq:QWd9} is satisfied for the Lam\'e operator \eqref{TYd-YG-76g}
in ${\mathbb{R}}^n$ with Lam\'e moduli as in \eqref{Yfhv-8yg}.

\vskip 0.10in

Below we show that, as advertised in the preamble to this section,
the vanishing conormal condition \eqref{eq:Asd.FFF} is stronger than
the regularity condition \eqref{eq:QWd9}.

\begin{proposition}\label{V-Naa.11.PPP}
Let $L$ be a strongly elliptic, second-order, homogeneous, $M\times M$ system,
with constant complex coefficients.
Denote by $E=\big(E_{\gamma\beta}\big)_{1\leq\gamma,\beta\leq M}$
the canonical fundamental solution for $L$ from Theorem~\ref{FS-prop}, and
make the additional assumption that $L$ may be written as $L=\partial_r A_{rs}\partial_s$
for some family of complex matrices
$A_{rs}=\bigl(a_{rs}^{\,\alpha\beta}\bigr)_{1\leq\alpha,\beta\leq M}$,
$1\leq r,s\leq n$, satisfying
\begin{equation}\label{eq:Asd.FFF.PPP}
\begin{array}{l}
a^{\beta\alpha}_{rn}\big(\partial_r E_{\gamma\beta}\big)(x',0)=0
\,\,\mbox{ for each }\,\,x'\in{\mathbb{R}}^{n-1}\setminus\{0\},
\\[6pt]
\mbox{for every fixed multi-indices }\,\,\alpha,\gamma\in\{1,\dots,M\}.
\end{array}
\end{equation}
Then condition \eqref{eq:QWd9} holds.
\end{proposition}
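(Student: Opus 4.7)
The plan is to use Proposition~\ref{uniq:double->!Poisson} to write the conjugate kernel functions as an explicit multiple of (a derivative of) the fundamental solution $E$, and then show that the vanishing conormal condition \eqref{eq:Asd.FFF.PPP} is precisely what is needed to cancel the singular factor $1/t$ that appears in the formula. First I would invoke Proposition~\ref{uniq:double->!Poisson}, which under hypothesis \eqref{eq:Asd.FFF.PPP} yields
\begin{equation*}
K^L_{\gamma\alpha}(x',t)=2a^{\beta\alpha}_{rn}\bigl(\partial_r E_{\gamma\beta}\bigr)(x',t)
\quad\text{for every }(x',t)\in{\mathbb{R}}^n_+.
\end{equation*}
A direct computation from definitions \eqref{eq:QWd7}-\eqref{eq:QWd8} gives, for each $j\in\{1,\dots,n-1\}$,
\begin{equation*}
K_j(x',t)=(Q^L_j)_t(x')=(x_j/t)\,K^L(x',t)=2\,\frac{x_j}{t}\,F(x',t),
\quad (x',t)\in{\mathbb{R}}^n_+,
\end{equation*}
where $F_{\gamma\alpha}(x):=a^{\beta\alpha}_{rn}(\partial_r E_{\gamma\beta})(x)$ is real-analytic on ${\mathbb{R}}^n\setminus\{0\}$, by part (1) of Theorem~\ref{FS-prop}.

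The crux is to extend $F(x',t)/t$ smoothly across the punctured hyperplane $\{t=0,\,x'\neq 0\}$. Here is where \eqref{eq:Asd.FFF.PPP} enters: it asserts exactly that $F(x',0)=0$ for every $x'\in{\mathbb{R}}^{n-1}\setminus\{0\}$. Define
\begin{equation*}
G(x',t):=\int_0^1(\partial_{x_n}F)(x',st)\,ds\quad\text{for }(x',t)\in{\mathbb{R}}^n\text{ with }x'\neq 0,
\end{equation*}
so that by the Fundamental Theorem of Calculus $F(x',t)=t\,G(x',t)$ on this set. Since the integrand is jointly real-analytic in $(x',t,s)$ wherever $(x',st)\neq 0$, and this holds for all $s\in[0,1]$ as soon as $x'\neq 0$, the function $G$ is real-analytic on $\{x'\neq 0\}$. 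On the complement $\{x'=0,\,t\neq 0\}$ we simply set $G(x',t):=F(x',t)/t$, which is also real-analytic there since $F$ is and $t\neq 0$. The two definitions agree on their overlap $\{x'\neq 0,\,t\neq 0\}$, so $G\in{\mathscr{C}}^\infty({\mathbb{R}}^n\setminus\{0\})$ and $K_j(x',t)=2x_j\,G(x',t)$ on ${\mathbb{R}}^n_+$.

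It remains to verify that the extension $2x_j\,G$ to all of ${\mathbb{R}}^n\setminus\{0\}$ is compatible with the odd-reflection rule \eqref{eq:QWd8} on ${\mathbb{R}}^n_-$. This follows from the evenness of $E$ recorded in \eqref{smmth-odd}: since $E(-x)=E(x)$, each $\partial_r E$ is odd, hence so is $F$, which in turn gives
\begin{equation*}
G(-x',-t)=\frac{F(-x',-t)}{-t}=\frac{-F(x',t)}{-t}=G(x',t),
\end{equation*}
i.e., $G$ is invariant under $x\mapsto -x$. Consequently, for $(x',t)\in{\mathbb{R}}^n_-$,
\begin{equation*}
-K_j(-x',-t)=-2(-x_j)\,G(-x',-t)=2x_j\,G(x',t),
\end{equation*}
matching the formula $2x_j\,G$ on ${\mathbb{R}}^n_-$ as well. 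Therefore $K_j=2x_j\,G\in{\mathscr{C}}^\infty({\mathbb{R}}^n\setminus\{0\})$, which is \eqref{eq:QWd9}. The only real obstacle is the smooth division of $F$ by $t$ across $\{t=0\}\setminus\{0\}$; once \eqref{eq:Asd.FFF.PPP} is invoked this is routine, and the symmetry check to reconcile with odd reflection is immediate from \eqref{smmth-odd}.
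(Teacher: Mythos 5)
Your argument is correct and follows essentially the same route as the paper: both write the conjugate kernels on ${\mathbb{R}}^n_{+}$ as $2(x_j/t)\,a^{\beta\alpha}_{rn}(\partial_r E_{\gamma\beta})(x',t)$ via Proposition~\ref{uniq:double->!Poisson}, and then glue a smooth extension out of two charts — the quotient by $t$ on $\{x_n\neq 0\}$ and the Fundamental Theorem of Calculus integral (made possible precisely by \eqref{eq:Asd.FFF.PPP}) on $\{x'\neq 0\}$ — checking compatibility with the odd reflection through the parity \eqref{smmth-odd}. The only cosmetic difference is that you factor out $x_j$ and glue the even function $G$, whereas the paper glues the odd kernels $K^{(1)}_j,K^{(2)}_j$ directly; this changes nothing of substance.
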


\begin{proof}
Define
\begin{equation}\label{eq:Kytrf.a}
\begin{array}{c}
{\mathcal{O}}_1:=\big\{x=(x',x_n)\in{\mathbb{R}}^n:\,x_n\not=0\big\},
\\[6pt]
{\mathcal{O}}_2:=\big\{x=(x',x_n)\in{\mathbb{R}}^n:\,x'\not=0'\big\},
\end{array}
\end{equation}
and, for each $j\in\{1,\dots,n-1\}$, consider the functions
\begin{equation}\label{eq:Kytrf}
\begin{array}{c}
K^{(1)}_j:{\mathcal{O}}_1\longrightarrow{\mathbb{C}}^{M\times M}
\\[6pt]
K^{(1)}_j(x):=\displaystyle\Big(\frac{x_j}{x_n}
2a^{\beta\alpha}_{rn}\big(\partial_r E_{\gamma\beta}\big)(x)\Big)_{1\leq\gamma,\alpha\leq M},
\qquad\forall\,x=(x_k)_{1\leq k\leq n}\in{\mathcal{O}}_1,
\end{array}
\end{equation}
and
\begin{equation}\label{eq:Kytrf.2}
\begin{array}{c}
K^{(2)}_j:{\mathcal{O}}_2\longrightarrow{\mathbb{C}}^{M\times M}
\\[6pt]
K^{(2)}_j(x):=\left(\displaystyle\int_0^1 x_j
2a^{\beta\alpha}_{rn}\big(\partial_n\partial_r E_{\gamma\beta}\big)(x',\theta x_n)\,d\theta
\right)_{1\leq\gamma,\alpha\leq M},
\qquad\forall\,x=(x',x_n)\in{\mathcal{O}}_2.
\end{array}
\end{equation}
Then ${\mathcal{O}}_1,{\mathcal{O}}_2$ are open subsets of ${\mathbb{R}}^n$ satisfying
\begin{equation}\label{eq:itrR6g}
{\mathcal{O}}_1\cup{\mathcal{O}}_2={\mathbb{R}}^n\setminus\{0\},\quad
-{\mathcal{O}}_1={\mathcal{O}}_1,\quad
-{\mathcal{O}}_2={\mathcal{O}}_2,
\end{equation}
while Theorem~\ref{FS-prop} guarantees that $K^{(1)}_j\in{\mathscr{C}}^\infty({\mathcal{O}}_1)$,
$K^{(2)}_j\in{\mathscr{C}}^\infty({\mathcal{O}}_2)$, and $K^{(1)}_j,K^{(2)}_j$ are odd.
Furthermore, thanks to \eqref{eq:Asd.FFF.PPP} and the Fundamental Theorem of Calculus,
we have $K^{(2)}_j=K^{(1)}_j$ on ${\mathcal{O}}_1\cap{\mathcal{O}}_2$. Collectively,
these observations imply that, for each $j\in\{1,\dots,n-1\}$, the function
\begin{equation}\label{eq:Phgf7bt5}
K_j:{\mathbb{R}}^n\setminus\{0\}\longrightarrow{\mathbb{C}}^{M\times M},
\qquad
K_j(x):=\left\{
\begin{array}{ll}
K^{(1)}_j(x) & \mbox{ for }\,\,x\in{\mathcal{O}}_1,
\\[6pt]
K^{(2)}_j(x) & \mbox{ for }\,\,x\in{\mathcal{O}}_2,
\end{array}
\right.
\end{equation}
satisfies
\begin{equation}\label{eq:jygtt5}
K_j\in{\mathscr{C}}^\infty({\mathbb{R}}^n\setminus\{0\})
\,\,\mbox{ and }\,\,K_j\,\,\mbox{ is odd}.
\end{equation}
In addition, having fixed $j\in\{1,\dots,n-1\}$,
for each $x'\in{\mathbb{R}}^{n-1}$ and $t>0$ we have
\begin{align}\label{y5556}
(Q^L_j)_t(x') &=t^{1-n}Q^L_j(x'/t)=t^{1-n}\frac{x_j}{t}P^L(x'/t)
\nonumber\\[4pt]
&=\Big(t^{1-n}\frac{x_j}{t}2a^{\beta\alpha}_{rn}(\partial_r E_{\gamma\beta})(x'/t,1)
\Big)_{1\leq\gamma,\alpha\leq M}
\nonumber\\[4pt]
&=\Big(\frac{x_j}{t}2a^{\beta\alpha}_{rn}(\partial_r E_{\gamma\beta})(x',t)
\Big)_{1\leq\gamma,\alpha\leq M}
\nonumber\\[4pt]
&=K^{(1)}_j(x',t)=K_j(x',t),
\end{align}
by \eqref{eq:QWd7}, \eqref{yncuds.AAA}, part {\it (4)} in Theorem~\ref{FS-prop}, \eqref{eq:Kytrf},
and \eqref{eq:Phgf7bt5}. In turn, \eqref{eq:jygtt5} and \eqref{y5556} show that
\eqref{eq:QWd9} holds, finishing the proof.
\end{proof}

In light of Proposition~\ref{V-Naa.11.PPP}, the well-posedness result in the next
proposition is an improvement over \eqref{eq:RPWP}.

\begin{proposition}\label{jdsgau}
Let $L$ be a system with complex coefficients as in \eqref{L-def}-\eqref{L-ell.X} and
consider the Poisson kernel $P^L$ for $L$ in $\mathbb{R}^{n}_{+}$ as in Theorem~\ref{kkjbhV}.
If condition \eqref{eq:QWd9} holds, then for each $p\in(1,\infty)$ the Regularity problem
$(R_p)$ for $L$ in ${\mathbb{R}}^n_{+}$, as formulated in \eqref{Dir-BVvku}, is well-posed.
\end{proposition}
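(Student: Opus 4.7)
Uniqueness for $(R_p)$ comes for free: any solution of $(R_p)$ is a fortiori a solution of $(D_p)$ with the same boundary datum, hence is uniquely determined by Theorem~\ref{Theorem-NiceDP}. For existence, given $f\in L^p_1(\mathbb{R}^{n-1})$, I would take as candidate $u(x',t):=(P^L_t\ast f)(x')$. Parts~(6)--(7) of Theorem~\ref{kkjbhV} guarantee that $u\in\mathscr{C}^\infty(\mathbb{R}^n_+)$, $Lu=0$ in $\mathbb{R}^n_+$, $\mathcal{N}u\in L^p(\mathbb{R}^{n-1})$, and $u\big|^{{}^{\rm n.t.}}_{\partial\mathbb{R}^n_+}=f$. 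Thus the entire task reduces to showing $\mathcal{N}(\nabla u)\in L^p(\mathbb{R}^{n-1})$. For a tangential index $j\in\{1,\dots,n-1\}$ this is easy: differentiating under the integral yields $\partial_j u(x',t)=(P^L_t\ast\partial_jf)(x')$, and the maximal-function bound \eqref{exTGFVC} gives $\mathcal{N}(\partial_j u)\le C\,\mathcal{M}(\partial_jf)\in L^p(\mathbb{R}^{n-1})$.

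\textbf{Key identity for $\partial_n u$.} The substance of the argument is to represent $\partial_n u$ as a sum of singular integral operators associated with the conjugate kernel functions $K_j$ from \eqref{eq:QWd8} acting on tangential derivatives of $f$. The starting point is that $K(x',t):=P^L_t(x')=t^{1-n}P^L(x'/t)$ is positive homogeneous of degree $1-n$ on $\mathbb{R}^n_+$, so Euler's identity reads
\begin{equation*}
\sum_{j=1}^{n-1}x_j\,\partial_{x_j}K(x',t)+t\,\partial_tK(x',t)=(1-n)K(x',t).
\end{equation*}
Since $K_j(x',t)=(Q^L_j)_t(x')=(x_j/t)K(x',t)$ on $\mathbb{R}^n_+$ by \eqref{eq:QWd7}--\eqref{eq:QWd8}, combining $\partial_{x_j}K_j=K/t+(x_j/t)\partial_{x_j}K$ with the above Euler identity gives, after a direct calculation, the pointwise relation
\begin{equation*}
\partial_t K(x',t)=-\sum_{j=1}^{n-1}\partial_{x_j}K_j(x',t),\qquad (x',t)\in\mathbb{R}^n_+.
\end{equation*}
Using this identity I would then establish the representation
\begin{equation*}
\partial_n u(x',t)=-\sum_{j=1}^{n-1}\int_{\mathbb{R}^{n-1}}K_j(x'-y',t)(\partial_jf)(y')\,dy',\qquad(x',t)\in\mathbb{R}^n_+,
\end{equation*}
first for $f\in\mathscr{C}^\infty_0(\mathbb{R}^{n-1})$ by differentiating under the integral, substituting the kernel identity, and integrating by parts in $y_j$ (compact support of $f$ makes boundary terms vanish), and then for general $f\in L^p_1(\mathbb{R}^{n-1})$ via density, passing to the limit on the left using interior estimates and on the right using that $K_j(x'-\cdot,t)\in L^{p'}(\mathbb{R}^{n-1})$ for each fixed $(x',t)\in\mathbb{R}^n_+$ (which follows from the decay $|K_j(z,t)|\le C(|z|^2+t^2)^{-(n-1)/2}$).

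\textbf{Closing via Theorem~\ref{Main-T2} and the main obstacle.} Hypothesis \eqref{eq:QWd9} delivers exactly the regularity needed to apply Theorem~\ref{Main-T2} to each $K_j$: it lies in $\mathscr{C}^\infty(\mathbb{R}^n\setminus\{0\})\subset\mathscr{C}^N(\mathbb{R}^n\setminus\{0\})$, it is odd by construction (cf.~\eqref{eq:QWd8}), and it is positive homogeneous of degree $-(n-1)$ since $K_j(\lambda x',\lambda t)=(\lambda t)^{-n}(\lambda x_j)P^L(x'/t)=\lambda^{-(n-1)}K_j(x',t)$ for $\lambda>0$. Applying Theorem~\ref{Main-T2} entrywise yields, for each $j\in\{1,\dots,n-1\}$,
\begin{equation*}
\|\mathcal{N}(\mathcal{T}_j\partial_jf)\|_{L^p(\mathbb{R}^{n-1})}\le C\,\|K_j|_{S^{n-1}}\|_{\mathscr{C}^N(S^{n-1})}\,\|\partial_jf\|_{L^p(\mathbb{R}^{n-1})},
\end{equation*}
where $\mathcal{T}_j$ is the integral operator from \eqref{T-layer} with kernel $K_j$. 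Summing in $j$ and combining with the tangential bound gives $\|\mathcal{N}(\nabla u)\|_{L^p(\mathbb{R}^{n-1})}\le C\|f\|_{L^p_1(\mathbb{R}^{n-1})}$, completing the existence half of well-posedness. The main obstacle I anticipate is the approximation step that promotes the representation formula for $\partial_n u$ from $\mathscr{C}^\infty_0$-data to general $L^p_1$-data, since $K_j(\cdot,t)$ fails to be in $L^1(\mathbb{R}^{n-1})$; however, its decay places it in every $L^q(\mathbb{R}^{n-1})$ with $q>1$, which by H\"older pairs with $\partial_jf\in L^p(\mathbb{R}^{n-1})$ and justifies taking the limit.
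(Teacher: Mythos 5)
Your proposal is correct and follows essentially the same route as the paper: the same candidate $u=P^L_t\ast f$, the same representation $\partial_n u=-\sum_{j=1}^{n-1}{\mathcal{T}}_j(\partial_j f)$ in terms of the conjugate kernels $K_j$, the same application of Theorem~\ref{Main-T2} made possible by \eqref{eq:QWd9}, the tangential estimate via part {\it (6)} of Theorem~\ref{kkjbhV}, and uniqueness inherited from $(D_p)$. The only (harmless) difference is how the representation for $\partial_n u$ is derived: the paper changes variables $z'=(x'-y')/t$ and differentiates $\int_{{\mathbb{R}}^{n-1}}P^L(z')f(x'-tz')\,dz'$ directly in $t$, obtaining the formula for all $f\in L^p_1({\mathbb{R}}^{n-1})$ at once, whereas you get it from Euler's identity for the homogeneous kernel together with integration by parts on ${\mathscr{C}}^\infty_0$ data followed by a density argument -- both computations are valid.
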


\begin{proof}
Fix $p\in(1,\infty)$ and pick an arbitrary $f\in L^p_1({\mathbb{R}}^{n-1})$.
Then we claim that the function $u(x',t):=(P_t^L\ast f)(x')$ for $(x',t)\in{\mathbb{R}}^n_{+}$
is a solution of $(R_p)$ for boundary datum $f$. As before, the only thing to verify is
\eqref{eq:hds65}. With this goal in mind, for each $x=(x',t)\in{\mathbb{R}}^n_{+}$ we write
\begin{align}\label{urree}
(\partial_n u)(x) &=\frac{d}{dt}\big[(P_t^L\ast f)(x')\big]
\nonumber\\[4pt]
&=\frac{d}{dt}\left[\int_{{\mathbb{R}}^{n-1}}t^{1-n}P^L\big((x'-y')/t\big)f(y')\,dy'\right]
\nonumber\\[4pt]
&=\frac{d}{dt}\left[\int_{{\mathbb{R}}^{n-1}}P^L(z')f(x'-tz')\,dz'\right]
\nonumber\\[4pt]
&=-\sum_{j=1}^{n-1}\int_{{\mathbb{R}}^{n-1}}z_jP^L(z')(\partial_j f)(x'-tz')\,dz'
\nonumber\\[4pt]
&=-\sum_{j=1}^{n-1}\int_{{\mathbb{R}}^{n-1}}Q^L_j(z')(\partial_j f)(x'-tz')\,dz'
\nonumber\\[4pt]
&=-\sum_{j=1}^{n-1}\int_{{\mathbb{R}}^{n-1}}t^{1-n}Q^L_j\big((x'-y')/t\big)(\partial_j f)(y')\,dy'
\nonumber\\[4pt]
&=-\sum_{j=1}^{n-1}\int_{{\mathbb{R}}^{n-1}}\big(Q^L_j\big)_t(x'-y')(\partial_j f)(y')\,dy'
\nonumber\\[4pt]
&=-\sum_{j=1}^{n-1}\int_{{\mathbb{R}}^{n-1}}K_j\big(x-(y',0)\big)(\partial_j f)(y')\,dy'.
\end{align}
Recall that \eqref{eq:QWd9} is assumed to hold.
Hence, if for each $j\in\{1,\dots,n-1\}$ we denote by ${\mathcal{T}}_j$ the integral operator
associated as in \eqref{T-layer}  to the kernel $K_j\in{\mathscr{C}}^\infty({\mathbb{R}}^n\setminus\{0\})$,
then \eqref{urree} becomes
\begin{eqnarray}\label{bsyus56.iii}
\partial_n u=-\sum\limits_{j=1}^{n-1}{\mathcal{T}}_j(\partial_jf)
\,\,\,\mbox{ in }\,\,\,{\mathbb{R}}^n_{+}.
\end{eqnarray}
Given that the kernels $K_j$ satisfy all conditions in \eqref{ker} (as is evident from
\eqref{eq:QWd8} and \eqref{eq:QWd9}), Theorem~\ref{Main-T2} then implies
that ${\mathcal{N}}(\partial_n u)\in L^p({\mathbb{R}}^{n-1})$ and
$\big\|{\mathcal{N}}(\partial_n u)\big\|_{L^p({\mathbb{R}}^{n-1})}
\leq C\|f\|_{L^p_1({\mathbb{R}}^{n-1})}$, for some $C\in(0,\infty)$ independent of $f$.
That similar properties also hold for partial derivatives $\partial_k u$ with $k\in\{1,\dots,n-1\}$
is seen directly from \eqref{eq:efgg} and part {\it (6)} in Theorem~\ref{kkjbhV}.
This proves that $(R_p)$ has a solution satisfying natural estimates. Since uniqueness
is contained in Theorem~\ref{Theorem-NiceDP}, it follows that $(R_p)$ is well-posed.
\end{proof}

Having established Proposition~\ref{jdsgau}, we may now proceed to state and prove the
main result in this section.

\begin{theorem}\label{V-Naa.11.CCC}
Assume that $L$ is a strongly elliptic, second-order, homogeneous, $M\times M$ system,
with constant complex coefficients, with the property that \eqref{eq:QWd9} holds.
Fix $p\in(1,\infty)$ and consider the $C_0$-semigroup $T=\{T(t)\}_{t\geq 0}$ on
$L^p({\mathbb{R}}^{n-1})$ associated with $L$ as in Theorem~\ref{VCXga}.
Finally, let ${\mathbf{A}}$ be the infinitesimal generator of $T$, with domain $D({\mathbf{A}})$.
Then
\begin{equation}\label{eq:tfc.1-new.CCC}
D({\mathbf{A}})=L^p_1({\mathbb{R}}^{n-1})
\end{equation}
and for each $f\in L^p_1({\mathbb{R}}^{n-1})$ one has
\begin{align}\label{mdiab.CCC}
{\mathbf{A}}f(x') &=-\sum\limits_{j=1}^{n-1}\frac{1}{2i}\widehat{K_j}(-{\mathbf e}_n)(\partial_jf)(x')
\nonumber\\[4pt]
&\quad -\sum\limits_{j=1}^{n-1}\lim_{\varepsilon\to 0^{+}}
\int\limits_{\substack{y'\in{\mathbb{R}}^{n-1}\\ |x'-y'|>\varepsilon}}
K_j(x'-y',0)(\partial_jf)(y')\,dy'
\end{align}
for a.e. $x'\in{\mathbb{R}}^{n-1}$.
\end{theorem}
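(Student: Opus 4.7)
The plan is to combine Proposition~\ref{jdsgau} with the identification of the infinitesimal generator in Theorem~\ref{V-Naa.11}, and then extract the explicit formula by applying the jump relations from Theorem~\ref{Main-T2} to the representation of $\partial_n u$ already derived in the proof of Proposition~\ref{jdsgau}.

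First, to obtain \eqref{eq:tfc.1-new.CCC}, I would invoke Proposition~\ref{jdsgau}, which under hypothesis \eqref{eq:QWd9} guarantees that the Regularity problem $(R_p)$ is well-posed for $L$ in $\mathbb{R}^n_+$. In particular, $(R_p)$ is solvable for arbitrary boundary data in $L^p_1(\mathbb{R}^{n-1})$. Plugging this into the equivalence \eqref{eq:tfc.2BB} of Theorem~\ref{V-Naa.11} yields $D(\mathbf{A})=L^p_1(\mathbb{R}^{n-1})$ immediately.

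Next, to establish \eqref{mdiab.CCC}, fix $f\in L^p_1(\mathbb{R}^{n-1})$ and let $u(x',t):=(P^L_t\ast f)(x')$. By Theorem~\ref{V-Naa.11}, we have $\mathbf{A}f=(\partial_n u)\big|^{{}^{\rm n.t.}}_{\partial\mathbb{R}^n_+}$ a.e.~in $\mathbb{R}^{n-1}$. The calculation in \eqref{urree}--\eqref{bsyus56.iii} (carried out inside the proof of Proposition~\ref{jdsgau}, using \eqref{eq:QWd7}--\eqref{eq:QWd9}) gives the representation
\begin{equation*}
(\partial_n u)(x) = -\sum_{j=1}^{n-1}\mathcal{T}_j(\partial_j f)(x),\qquad x\in\mathbb{R}^n_+,
\end{equation*}
where $\mathcal{T}_j$ is the integral operator from \eqref{T-layer} associated with the kernel $K_j\in\mathscr{C}^\infty(\mathbb{R}^n\setminus\{0\})$. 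Each $K_j$ is odd and positive homogeneous of degree $1-n$ (cf.~\eqref{eq:QWd8} and the homogeneity of $P^L_t$), so $K_j$ satisfies \eqref{ker} and Theorem~\ref{Main-T2} applies. Since $\partial_j f\in L^p(\mathbb{R}^{n-1})$ for each $j\in\{1,\dots,n-1\}$, the jump-formula \eqref{main-jump} gives, at a.e.~$x'\in\mathbb{R}^{n-1}$,
\begin{equation*}
\Bigl(\mathcal{T}_j(\partial_j f)\big|^{{}^{\rm n.t.}}_{\partial\mathbb{R}^n_+}\Bigr)(x') = \tfrac{1}{2i}\,\widehat{K_j}(-\mathbf{e}_n)\,(\partial_j f)(x') + T_j(\partial_j f)(x'),
\end{equation*}
where $T_j$ is the principal value operator as in \eqref{main-lim}. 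Summing over $j$ and inserting into $\mathbf{A}f=(\partial_n u)\big|^{{}^{\rm n.t.}}_{\partial\mathbb{R}^n_+}$ produces exactly \eqref{mdiab.CCC}.

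There is no genuine obstacle here: the strategy is essentially bookkeeping, since the two substantive ingredients (well-posedness of $(R_p)$ under \eqref{eq:QWd9}, and the identification of $\mathbf{A}$ as the Dirichlet-to-Normal map) have already been established earlier in the paper. The only mildly delicate point is verifying that the kernel $K_j$ produced by \eqref{eq:QWd9} satisfies the smoothness, oddness, and homogeneity hypotheses of Theorem~\ref{Main-T2}, so that both the principal-value operator and the jump term are meaningful; this is straightforward from \eqref{eq:QWd8}--\eqref{eq:QWd9} together with the scaling $(Q^L_j)_t(x')=t^{1-n}Q^L_j(x'/t)$.
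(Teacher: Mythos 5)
Your proposal is correct and follows essentially the same route as the paper: the paper also deduces \eqref{eq:tfc.1-new.CCC} from \eqref{eq:tfc.2BB} together with Proposition~\ref{jdsgau}, and obtains \eqref{mdiab.CCC} from \eqref{mJBVV}, the representation \eqref{bsyus56.iii}, and the jump-formula \eqref{main-jump}. Your remark on verifying that each $K_j$ satisfies the hypotheses \eqref{ker} of Theorem~\ref{Main-T2} matches the observation already made in the proof of Proposition~\ref{jdsgau}.
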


\begin{proof}
The identification of the domain of the infinitesimal generator from
\eqref{eq:tfc.1-new.CCC} is a consequence of \eqref{eq:tfc.2BB} and Proposition~\ref{jdsgau},
while formula \eqref{mdiab.CCC} follows from \eqref{mJBVV}, \eqref{bsyus56.iii}, and \eqref{main-jump}.
\end{proof}

\section{Iterations of the infinitesimal generator}
\setcounter{equation}{0}
\label{S-5}

Having identified the infinitesimal generator ${\mathbf{A}}$ of the Poisson semigroup
from Theorem~\ref{VCXga}, here the goal is to describe ${\mathbf{A}}^k$ for arbitrary
$k\in{\mathbb{N}}$. Interestingly, this is intimately connected with a higher order
Regularity problem for the system in question, which we first describe.

For each $p\in(1,\infty)$ and $k\in\mathbb{N}_0$ denote by $L^p_k(\mathbb{R}^{n-1})$
the classical Sobolev space of order $k$ in $\mathbb{R}^{n-1}$, consisting of
functions from $L^p(\mathbb{R}^{n-1})$ whose distributional derivatives up to order
$k$ are in $L^p(\mathbb{R}^{n-1})$. This becomes a Banach space when equipped with
the natural norm
\begin{equation}\label{Lnanb}
\|f\|_{L^p_k(\mathbb{R}^{n-1})}:=\sum_{|\alpha'|\leq k}\|\partial^{\alpha'}\! f\|_{L^p(\mathbb{R}^{n-1})},
\qquad\forall\,f\in L^p_k(\mathbb{R}^{n-1}).
\end{equation}

Assume $L$ is a system as in \eqref{L-def}-\eqref{L-ell.X}, and fix some $p\in(1,\infty)$.
Given $k\in{\mathbb{N}}_0$, we formulate the $k$-th order Regularity problem
for $L$ in $\mathbb{R}^{n}_+$ as follows:
\begin{equation}\label{Dir-BVP-p:l}
(R^k_p)\,\,\left\{
\begin{array}{l}
u\in{\mathscr{C}}^\infty(\mathbb{R}^{n}_{+}),
\\[4pt]
Lu=0\,\,\mbox{ in }\,\,\mathbb{R}^{n}_{+},
\\[8pt]
{\mathcal{N}}(\nabla^\ell u)\in L^p(\mathbb{R}^{n-1})
\,\,\mbox{ for }\,\,\ell\in\{0,1,...,k\},
\\[4pt]
u\bigl|_{\partial\mathbb{R}^{n}_{+}}^{{}^{n.t.}}=f\in L^p_k(\mathbb{R}^{n-1}).
\end{array}
\right.
\end{equation}
Of course, $(R^k_p)$ reduces to $(D_p)$ when $k=0$ and to $(R_p)$ when $k=1$.
In regard to \eqref{Dir-BVP-p:l} we note the following well-posedness result,
recently proved in \cite{EE-MMMM}.

\begin{theorem}\label{them:Dir-l}
Let $L$ be a strongly elliptic, second-order, homogeneous, $M\times M$ system,
with constant complex coefficients, with the property that
\eqref{Ea4-fCii-n3}-\eqref{Ea4-fCii-n2B} hold.

Then for each $p\in(1,\infty)$ and $k\in{\mathbb{N}}_0$
the $k$-th order Regularity problem $(R^k_p)$ for $L$ in $\mathbb{R}^{n}_+$
has a unique solution, which is actually given by
\begin{equation}\label{eqn-Dir-l:u}
u(x',t)=(P^L_t*f)(x'),\qquad\forall\,(x',t)\in{\mathbb{R}}^n_{+},
\end{equation}
where $P^L$ is the Poisson kernel for $L$ in $\mathbb{R}^{n}_+$ from
Theorem~\ref{kkjbhV}. Furthermore, there exists a
constant $C=C(n,p,L,k)\in(0,\infty)$ with the property that
\begin{equation}\label{Dir-BVP-p2F.ii:l}
\sum_{\ell=0}^k \big\|{\mathcal{N}}(\nabla^\ell u)\big\|_{L^p(\mathbb{R}^{n-1})}
\leq C\|f\|_{L^p_k(\mathbb{R}^{n-1})}.
\end{equation}
\end{theorem}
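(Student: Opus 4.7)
My plan is to take $u(x',t) := (P^L_t * f)(x')$ as the candidate solution and prove the estimate \eqref{Dir-BVP-p2F.ii:l} by induction on $k$. Uniqueness is immediate: any $u$ solving $(R^k_p)$ automatically solves $(D_p)$ with datum $f \in L^p_k \subset L^p$, and Theorem~\ref{Theorem-NiceDP} forces $u = P^L_t * f$. The smoothness, the equation $Lu=0$, and the boundary trace of this $u$ are guaranteed by Theorem~\ref{kkjbhV}, so only the nontangential estimates remain.

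\textbf{Base cases.} The case $k=0$ is exactly \eqref{Dir-BVP-X:b} of Theorem~\ref{Theorem-NiceDP}. For $k=1$, the hypotheses \eqref{Ea4-fCii-n3}-\eqref{Ea4-fCii-n2B} combined with Proposition~\ref{UaalpIKL-PP} produce the vanishing conormal condition \eqref{eq:Asd.FFF}. This is precisely the setup in which \eqref{eq:hds65} was established in the proof of Theorem~\ref{V-Naa.11}, yielding $\|\mathcal{N}(\nabla u)\|_{L^p(\mathbb{R}^{n-1})} \leq C\|f\|_{L^p_1(\mathbb{R}^{n-1})}$ and so handling the $\ell=1$ contribution.

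\textbf{Inductive step.} Assume the conclusion for $k-1 \geq 1$ and fix $f \in L^p_k(\mathbb{R}^{n-1})$. Since $f \in L^p_{k-1}$, the inductive hypothesis controls $\mathcal{N}(\nabla^\ell u)$ for $0 \leq \ell \leq k-1$, so it suffices to bound $\mathcal{N}(\nabla^k u)$. Fix a multi-index $\alpha$ with $|\alpha| = k$ and split cases. If $\alpha_j \geq 1$ for some $j \in \{1,\ldots,n-1\}$, differentiation under the integral gives
\begin{equation*}
\partial^\alpha u \;=\; \partial^{\alpha-{\bf e}_j}\bigl(P^L_t * \partial_j f\bigr),
\end{equation*}
and because $\partial_j f \in L^p_{k-1}$ the inductive hypothesis applied to the solution $P^L_t * \partial_j f$ of $(R^{k-1}_p)$ bounds $\mathcal{N}(\partial^\alpha u)$ by $C\|\partial_j f\|_{L^p_{k-1}} \leq C\|f\|_{L^p_k}$. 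If instead $\alpha = k\,{\bf e}_n$, strong ellipticity specialized to $\xi = {\bf e}_n$ in \eqref{L-ell.X} makes the matrix $A_{nn} := (a^{\alpha\beta}_{nn})_{1 \leq \alpha,\beta \leq M}$ invertible, so the equation $Lu=0$ can be solved for
\begin{equation*}
\partial_n^2 u \;=\; -A_{nn}^{-1}\!\Bigl(\textstyle\sum_{s=1}^{n-1}(A_{ns}+A_{sn})\,\partial_n\partial_s u \,+\, \sum_{r,s=1}^{n-1} A_{rs}\,\partial_r\partial_s u\Bigr).
\end{equation*}
Writing $\partial_n^k u = \partial_n^{k-2}(\partial_n^2 u)$ then expresses $\partial_n^k u$ as a linear combination of mixed partial derivatives of $u$ of total order $k$, each carrying at least one tangential factor, reducing the purely-normal case to the tangential one already handled.

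\textbf{Main obstacle.} The entire analytic content lives in the $k=1$ base case, where the vanishing conormal condition is genuinely needed in order to produce the singular-integral representation \eqref{yncuds-2} for $\partial_n u$ and invoke the Calderón--Zygmund theory of Theorem~\ref{Main-T2}. Promoting this to arbitrary $k$ via the induction above is essentially algebraic: it relies only on the invertibility of $A_{nn}$ and on the commutation of tangential derivatives with convolution against $P^L_t$, with the $L^p$-boundedness of the Hardy--Littlewood maximal function absorbing the transfer of derivatives onto $f$. No new singular-integral estimates are required beyond those already secured at the $k=1$ level.
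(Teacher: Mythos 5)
Your argument is correct, but it is worth noting that the paper itself does not prove this statement: Theorem~\ref{them:Dir-l} is imported wholesale from \cite{EE-MMMM}, so your proposal is a genuinely different (self-contained) route built on ingredients internal to this paper. Concretely, you reduce everything to the $k=1$ case, which is exactly the estimate \eqref{eq:hds65} established in the proof of Theorem~\ref{V-Naa.11} under the vanishing conormal condition \eqref{eq:Asd.FFF} (supplied here by Proposition~\ref{UaalpIKL-PP} from \eqref{Ea4-fCii-n3}--\eqref{Ea4-fCii-n2B}), and then you climb in $k$ by two purely formal devices: the commutation $\partial_j(P^L_t\ast f)=P^L_t\ast(\partial_j f)$ for tangential $j$ (justified as in \eqref{eq:efgg}), which identifies $\partial_j u$ with the unique solution of $(R^{k-1}_p)$ for the datum $\partial_j f\in L^p_{k-1}$, and the invertibility of $A_{nn}$ (strong ellipticity at $\xi={\bf e}_n$), which converts the purely normal derivative $\partial_n^k u$ into a constant-coefficient combination of order-$k$ derivatives each carrying a tangential factor. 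Uniqueness via uniqueness for $(D_p)$ is also fine. What your route buys is the explicit observation that no new singular-integral estimates are needed beyond the first-order Regularity theory already present in \S\ref{S-3}; what the paper's citation buys is brevity and, presumably, the sharper/more systematic treatment of \cite{EE-MMMM}. Two small points you should make explicit if this were written out: the inductive hypothesis is invoked with the datum $\partial_j f$ rather than $f$, so one should record that $\|\partial_j f\|_{L^p_{k-1}}\leq\|f\|_{L^p_k}$ and that the solution of $(R^{k-1}_p)$ for that datum is again the Poisson extension (this is part of the induction statement), and the pointwise bound ${\mathcal{N}}(\partial_n^k u)\leq C\sum_\beta{\mathcal{N}}(\partial^\beta u)$ over multi-indices $\beta$ of length $k$ with a tangential entry, which is what actually closes the pure-normal case.
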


It has also been noted in \cite{EE-MMMM} that \eqref{Ea4-fCii-n3}-\eqref{Ea4-fCii-n2B} hold
both in the class of scalar elliptic operators with complex constant coefficients of the form
\eqref{eq:Kbag}, as well as for the Lam\'e system \eqref{TYd-YG-76g} with Lam\'e moduli as
in \eqref{Yfhv-8yg}. In particular, the well-posedness result described in Theorem~\ref{them:Dir-l}
is valid for these categories of operators.

The main result in this section is the theorem below, establishing a direct link between
the domain of powers of the infinitesimal generator of the Poisson semigroup from
Theorem~\ref{VCXga} and the solvability of the higher order Regularity problem \eqref{Dir-BVP-p:l}.

\begin{theorem}\label{V-Naa.11.HOR}
Let $L$ be a strongly elliptic, second-order, homogeneous, $M\times M$ system,
with constant complex coefficients. Fix some $p\in(1,\infty)$ and consider the
$C_0$-semigroup $T=\{T(t)\}_{t\geq 0}$ on $L^p({\mathbb{R}}^{n-1})$ associated
with $L$ as in Theorem~\ref{VCXga}. Denote by ${\mathbf{A}}$ the infinitesimal
generator of $T$ and pick some $k\in{\mathbb{N}}$. Then
\begin{equation}\label{eq:tfc.1-new.RRR-LAC}
\mbox{$D({\mathbf{A}}^k)$ is a dense linear subspace of $L^p_k({\mathbb{R}}^{n-1})$}
\end{equation}
and, in fact,
\begin{equation}\label{eq:tfc.1-new.HOR}
D({\mathbf{A}}^k)=\big\{f\in L^p_k({\mathbb{R}}^{n-1}):\,
\mbox{$(R^k_p)$ with boundary datum $f$ is solvable}\big\}.
\end{equation}
In particular,
\begin{equation}\label{eq:tfc.2BB.HOR}
D({\mathbf{A}}^k)=L^p_k({\mathbb{R}}^{n-1})
\,\Longleftrightarrow\,\,\mbox{$(R^k_p)$ is solvable
for arbitrary data in $L^p_k({\mathbb{R}}^{n-1})$}.
\end{equation}
Moreover, given any $f\in D({\mathbf{A}}^k)$, if $u$ solves $(R^k_p)$ with boundary
datum $f$ then
\begin{equation}\label{eq:hJB.HOR}
(\partial^k_n u)\big|^{{}^{\rm n.t.}}_{\partial{\mathbb{R}}^n_{+}}\,\,
\mbox{ exists a.e.~in ${\mathbb{R}}^{n-1}$}\,\,\mbox{ and }\,\,
{\mathbf{A}}^k f=(\partial^k_n u)\big|^{{}^{\rm n.t.}}_{\partial{\mathbb{R}}^n_{+}}.
\end{equation}

As a corollary, one has $D({\mathbf{A}}^k)=L^p_k({\mathbb{R}}^{n-1})$ whenever
\eqref{Ea4-fCii-n3}-\eqref{Ea4-fCii-n2B} hold, which is the case both for
scalar operators of the form \eqref{eq:Kbag} {\rm (}with constant complex coefficients
satisfying \eqref{sec-or-aEEE}{\rm )} and for the Lam\'e system \eqref{TYd-YG-76g}
with Lam\'e moduli as in \eqref{Yfhv-8yg}.
\end{theorem}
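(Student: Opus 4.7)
The plan is to prove Theorem~\ref{V-Naa.11.HOR} by induction on $k$, with the base case $k=1$ furnished by Theorem~\ref{V-Naa.11}. Two auxiliary results will drive the inductive step. The first is a trace-regularity lemma (Lemma~T): if $w\in\mathscr{C}^\infty(\mathbb{R}^n_+)$ satisfies $Lw=0$ in $\mathbb{R}^n_+$ and $\mathcal{N}(\nabla^\ell w)\in L^p(\mathbb{R}^{n-1})$ for $\ell=0,1,\ldots,m$, then $w\big|^{{}^{\rm n.t.}}_{\partial\mathbb{R}^n_+}\in L^p_m(\mathbb{R}^{n-1})$ with $\partial^{\alpha'}(w\big|^{{}^{\rm n.t.}}_{\partial\mathbb{R}^n_+})=(\partial^{\alpha'}w)\big|^{{}^{\rm n.t.}}_{\partial\mathbb{R}^n_+}$ distributionally for every tangential $\alpha'$ of length at most $m$. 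This I would prove by induction on $m$ applied to the null-solutions $\partial^{\alpha'}w$ of $L$ (admissible because $L$ has constant coefficients), via the divergence-theorem computation \eqref{eq:Rgfab}--\eqref{eq:IIIb} from the proof of Theorem~\ref{V-Naa.11}. The second is a commutation lemma (Lemma~C): for each $j\in\{1,\ldots,n-1\}$ and each $\ell\geq 1$, $f\in D(\mathbf{A}^\ell)$ implies $\partial_jf\in D(\mathbf{A}^{\ell-1})$ with $\mathbf{A}^{\ell-1}(\partial_jf)=\partial_j(\mathbf{A}^{\ell-1}f)$. Its proof starts from $T(t)\partial_jg=\partial_jT(t)g$ in $L^p$, valid for $g\in L^p$ with $\partial_jg\in L^p$ by integration by parts against the decaying kernel $K^L$ (cf.~part~{\it (5)} of Theorem~\ref{kkjbhV}); this is combined with the strong Riemann representation $T(t)f-f=\int_0^tT(s)\mathbf{A}f\,ds$ for $f\in D(\mathbf{A})$, differentiated in $x_j$ and passed to the limit $t\to 0^{+}$ via the strong $L^p$-continuity of $T$, which produces $\partial_jf\in D(\mathbf{A})$ whenever $\mathbf{A}f\in L^p_1$. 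Since the base case delivers $D(\mathbf{A})\subseteq L^p_1$, this handles $\ell\leq 2$, and the general $\ell$ follows by a short separate induction that iterates the level-$2$ assertion on $\mathbf{A}^{\ell-1}f\in D(\mathbf{A}^2)$.

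Given these two lemmas, the forward inclusion of the inductive step runs as follows. Let $f\in D(\mathbf{A}^{k+1})$ and set $u(x',t):=(P^L_t\ast f)(x')$. The base case provides $u$ solving $(R_p)$ with datum $f$ and $\mathbf{A}f=(\partial_nu)\big|^{{}^{\rm n.t.}}_{\partial\mathbb{R}^n_+}$; because $\mathbf{A}f\in D(\mathbf{A}^k)$, the inductive hypothesis identifies $v:=P^L_t\ast\mathbf{A}f$ as the solution of $(R^k_p)$ with datum $\mathbf{A}f$, and \eqref{eq:ADCV.ttt} forces $v=\partial_nu$ pointwise in $\mathbb{R}^n_+$; hence $\mathcal{N}(\nabla^\ell\partial_nu)\in L^p$ for $0\leq\ell\leq k$. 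By Lemma~C, each $\partial_jf\in D(\mathbf{A}^k)$ for $j\in\{1,\ldots,n-1\}$, so a second application of the inductive hypothesis yields $\partial_jf\in L^p_k$ and exhibits $\partial_ju=P^L_t\ast\partial_jf$ as the solution of $(R^k_p)$ with datum $\partial_jf$, producing $\mathcal{N}(\nabla^\ell\partial_ju)\in L^p$ for $0\leq\ell\leq k$. Any $\partial^\alpha u$ with $|\alpha|\leq k+1$ either has $\alpha_n\geq 1$ (handled by $\partial_nu=v$) or else $\alpha=\alpha'$ is purely tangential and of length $\geq 1$, in which case $\partial^{\alpha'}u=\partial^{\alpha'-e_j}(\partial_ju)$ for some $j\in\{1,\ldots,n-1\}$ (handled by $\partial_ju$). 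This delivers $\mathcal{N}(\nabla^\ell u)\in L^p$ for $0\leq\ell\leq k+1$, and Lemma~T upgrades this to $f\in L^p_{k+1}$; thus $u$ solves $(R^{k+1}_p)$ with datum $f$.

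For the reverse inclusion, assume $f\in L^p_{k+1}$ and $(R^{k+1}_p)$ is solvable with datum $f$. Theorem~\ref{Theorem-NiceDP} forces the solution to be $u=P^L_t\ast f$, and then $v:=\partial_nu$ satisfies $Lv=0$ with $\mathcal{N}(\nabla^\ell v)\leq\mathcal{N}(\nabla^{\ell+1}u)\in L^p$ for $0\leq\ell\leq k$; Lemma~T gives $\mathbf{A}f=v\big|^{{}^{\rm n.t.}}_{\partial\mathbb{R}^n_+}\in L^p_k$, and Theorem~\ref{tuFatou} produces $v=P^L_t\ast\mathbf{A}f$ so that $v$ solves $(R^k_p)$ with datum $\mathbf{A}f$; the inductive hypothesis therefore delivers $\mathbf{A}f\in D(\mathbf{A}^k)$, i.e., $f\in D(\mathbf{A}^{k+1})$. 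The identity $\mathbf{A}^kf=(\partial_n^ku)\big|^{{}^{\rm n.t.}}_{\partial\mathbb{R}^n_+}$ then falls out by iterating \eqref{mJBVV} together with $\partial_nu=P^L_t\ast\mathbf{A}f$. Density of $D(\mathbf{A}^k)$ in $L^p_k(\mathbb{R}^{n-1})$ is obtained by regularizing an arbitrary $f\in L^p_k$ through $f_\varepsilon:=u(\cdot,\varepsilon)$ with $u=P^L_t\ast f$: the interior smoothness of $u$ places $f_\varepsilon$ in $D(\mathbf{A}^k)$, and the argument of \eqref{eq:KIgt5c}--\eqref{i6tr} (invoking the $L^p$-boundedness of the Hardy--Littlewood maximal operator together with parts~{\it (6)}--{\it (7)} of Theorem~\ref{kkjbhV}) extends directly to give $f_\varepsilon\to f$ in $L^p_k$. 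The corollary about systems satisfying \eqref{Ea4-fCii-n3}--\eqref{Ea4-fCii-n2B} is immediate from \eqref{eq:tfc.1-new.HOR} combined with the well-posedness of $(R^k_p)$ supplied by Theorem~\ref{them:Dir-l}; applicability to the scalar operators \eqref{eq:Kbag} and to the Lam\'e system \eqref{TYd-YG-76g} is as recorded after Theorem~\ref{them:Dir-l}.

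The principal technical obstacle is expected to be Lemma~C. The single-level step $\partial_j:D(\mathbf{A}^2)\to D(\mathbf{A})$ already hinges on the regularity $\mathbf{A}f\in L^p_1$, which is available only through the base case $D(\mathbf{A})\subseteq L^p_1$ applied to the abstract hypothesis $\mathbf{A}f\in D(\mathbf{A})$; without this, the formal manipulation $\partial_j(T(t)f-f)/t\to\partial_j\mathbf{A}f$ cannot be elevated from distributional convergence to convergence in $L^p$. It is crucial that the internal induction within Lemma~C invoke only Theorem~\ref{V-Naa.11} rather than the full inductive hypothesis of the main theorem, so that no circularity intrudes in the outer induction on $k$.
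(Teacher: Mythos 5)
Your proposal is correct, and large parts of it run parallel to the paper: the reverse inclusion of \eqref{eq:tfc.1-new.HOR}, the formula \eqref{eq:hJB.HOR}, the density claim \eqref{eq:tfc.1-new.RRR-LAC}, and the final corollary are obtained essentially as in the paper (an induction on $k$ reducing matters to Theorem~\ref{V-Naa.11} through the vertical derivative, the regularization $f_\varepsilon=u(\cdot,\varepsilon)$, and Theorem~\ref{them:Dir-l}). Where you genuinely diverge is the left-to-right inclusion of \eqref{eq:tfc.1-new.HOR}. The paper proves it directly, without induction: from \eqref{eq:ADCV.ttt.79} it gets $\partial_n^\ell u=P^L_t\ast{\mathbf{A}}^\ell f$, then controls full gradients by iterating the estimate $\|{\mathcal{N}}_\kappa(\nabla v)\|_{L^p}\leq C\|{\mathcal{N}}_\kappa(\partial_n v)\|_{L^p}$ implicit in \eqref{eq:iFD} (where the square-function machinery lives), applied to the shifted functions $u_\varepsilon$ as in \eqref{eq:iFD.uyt.ad.YY} and removed by monotone convergence, before invoking Theorem~\ref{tuFatou} and \eqref{eq:IIIb.yfr}. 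You instead confine the hard analysis to the case $k=1$ and drive the induction with your commutation Lemma~C: tangential derivatives commute with $T(t)$ on $L^p_1({\mathbb{R}}^{n-1})$, so $\partial_j$ maps $D({\mathbf{A}}^{\ell})$ into $D({\mathbf{A}}^{\ell-1})$ with ${\mathbf{A}}^{\ell-1}\partial_jf=\partial_j{\mathbf{A}}^{\ell-1}f$; applying the inductive hypothesis to ${\mathbf{A}}f$ and to each $\partial_jf$ gives ${\mathcal{N}}_\kappa(\nabla^\ell\partial_nu),\,{\mathcal{N}}_\kappa(\nabla^\ell\partial_ju)\in L^p({\mathbb{R}}^{n-1})$ for $\ell\leq k$, which assembles into ${\mathcal{N}}_\kappa(\nabla^\ell u)\in L^p({\mathbb{R}}^{n-1})$ for $\ell\leq k+1$ exactly as you indicate, after which your Lemma~T (the iterated form of \eqref{eq:IIIb.yfr} plus Theorem~\ref{tuFatou}) yields $f\in L^p_{k+1}({\mathbb{R}}^{n-1})$. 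Your route buys a cleaner, purely semigroup-theoretic inductive step, with no $\varepsilon$-shifts or repeated square-function estimates at higher order; the paper's route avoids any new abstract lemma at the cost of bookkeeping with $u_\varepsilon$. The price of your route is that Lemma~C must be proved with some care: you need $T(t)\partial_jg=\partial_jT(t)g$ for $g\in L^p_1({\mathbb{R}}^{n-1})$ (your integration by parts, i.e.\ \eqref{eq:efgg}), the interchange of the closed operator $\partial_j$ with the Bochner integral $\int_0^tT(s){\mathbf{A}}f\,ds$ (Hille's theorem, using strong continuity of $s\mapsto T(s)\partial_j{\mathbf{A}}f$, or a distributional verification), and the identification of the strong derivative of $t\mapsto T(t)f$ with the pointwise $\partial_nu$ as in \eqref{eq:Jfac}--\eqref{eq:Jfac.2b}; you correctly note that all of this uses only $D({\mathbf{A}})\subseteq L^p_1({\mathbb{R}}^{n-1})$ from Theorem~\ref{V-Naa.11}, so no circularity enters the outer induction, and with these details supplied the argument is complete.
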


\begin{proof}
We start by proving the left-to-right inclusion in \eqref{eq:tfc.1-new.HOR}.
To this end, let $f\in D({\mathbf{A}}^k)$ be an arbitrary, fixed, function.
In particular, $f\in L^p({\mathbb{R}}^{n-1})$ and we introduce
\begin{equation}\label{eq:FDww.YY}
u(x',t):=\big(T(t)f\big)(x')=(P^L_t\ast f)(x')\,\,\,\mbox{ for }\,\,\,(x',t)\in{\mathbb{R}}^n_{+}.
\end{equation}
As before, this choice entails
\begin{equation}\label{KJHab.ttr}
u\in{\mathscr{C}}^\infty({\mathbb{R}}^n_{+}),\quad Lu=0\,\,\mbox{ in }\,\,{\mathbb{R}}^n_{+},\quad
{\mathcal{N}}_\kappa u\in L^p({\mathbb{R}}^{n-1}),\quad
u\big|^{{}^{\rm n.t.}}_{\partial{\mathbb{R}}^n_{+}}=f\,\,\mbox{ a.e.~in }\,\,{\mathbb{R}}^{n-1}.
\end{equation}
Then the identity in \eqref{eq:ADCV.ttt.79} yields that for each $\ell\in\{1,\dots,k\}$
\begin{align}\label{eq:FDww.YY2}
(\partial^\ell_n u)(x',t) &=
\frac{d^\ell}{dt^\ell}\big[u(x',t)\big]=\big(T(t){\mathbf{A}}^\ell f\big)(x')
\nonumber\\[4pt]
&=(P^L_t\ast{\mathbf{A}}^\ell f)(x')\,\,\,\mbox{ for }\,\,\,(x',t)\in{\mathbb{R}}^n_{+}.
\end{align}
Hence, since for each $\ell\in\{1,\dots,k\}$ we have ${\mathbf{A}}^\ell f\in L^p({\mathbb{R}}^{n-1})$,
this implies
\begin{equation}\label{eq:jufds.22Tuj}
{\mathcal{N}}_\kappa(\partial^\ell_n u)\in L^p({\mathbb{R}}^{n-1})\,\,\,
\mbox{for each }\,\,\ell\in\{1,\dots,k\}.
\end{equation}

It is implicit in \eqref{eq:iFD} that for each function satisfying
\begin{equation}\label{jxfrA.UTE}
\begin{array}{c}
v:{\mathbb{R}}^n_{+}\longrightarrow{\mathbb{C}}^M,\qquad
v\in{\mathscr{C}}^\infty(\overline{{\mathbb{R}}^n_{+}}),
\qquad Lv=0\,\,\mbox{ in }\,\,{\mathbb{R}}^n_{+},
\\[4pt]
{\mathcal{N}}_\kappa(\nabla^\ell v)\in L^p({\mathbb{R}}^{n-1})
\,\,\mbox{ and }\,\,\lim\limits_{t\to\infty}\Big[t^\ell\big|(\nabla^\ell v)(x',t)\big|\Big]=0
\,\,\mbox{ for }\,\,\ell\in\{1,2,3\},
\end{array}
\end{equation}
uniformly for $x'$ in compact subsets of ${\mathbb{R}}^{n-1}$,
one has
\begin{align}\label{eq:iFD.uyt}
\|{\mathcal{N}}_{\kappa}(\partial_j v)\|_{L^p({\mathbb{R}}^{n-1})}
\leq C\|{\mathcal{N}}_{\kappa}(\partial_n v)\|_{L^p({\mathbb{R}}^{n-1})},
\quad\forall\,j\in\{1,\dots,n\},
\end{align}
where the constant $C\in(0,\infty)$ is independent of $v$.
To proceed, fix $\varepsilon>0$ and, as in \eqref{jxfrA-2}, define
\begin{equation}\label{jxfrA-2nnn}
u_\varepsilon:{\mathbb{R}}^n_{+}\to {\mathbb{C}}^M,\quad
u_\varepsilon(x',t):=u(x',t+\varepsilon),\quad\forall\,(x',t)\in{\mathbb{R}}^n_{+}.
\end{equation}
Then for each $\ell\in{\mathbb{N}}_0$ we have
(cf. \eqref{jxfrA}, \eqref{elsiWth.UU3b}, \eqref{eq:jBBV})
\begin{equation}\label{jxfrAeded}
\begin{array}{c}
u_\varepsilon\in{\mathscr{C}}^\infty(\overline{{\mathbb{R}}^n_{+}}),
\quad Lu_\varepsilon=0\,\,\mbox{ in }\,\,{\mathbb{R}}^n_{+},\quad
{\mathcal{N}}_\kappa(\nabla^\ell u_\varepsilon)\in L^p({\mathbb{R}}^{n-1}),
\\[4pt]
\mbox{and }\,\,\lim\limits_{t\to\infty}
\Big[t^\ell\big|(\nabla^\ell u_\varepsilon)(x',t)\big|\Big]=0,
\,\,\mbox{ uniformly for $x'\in{\mathbb{R}}^{n-1}$}.
\end{array}
\end{equation}
Based on iterations of \eqref{jxfrA.UTE}-\eqref{eq:iFD.uyt}, \eqref{jxfrAeded}
we deduce that for each $\ell\in{\mathbb{N}}_0$
\begin{align}\label{eq:iFD.uyt.ad.YY}
\|{\mathcal{N}}_{\kappa}(\nabla^\ell u_\varepsilon)\|_{L^p({\mathbb{R}}^{n-1})}
\leq C\|{\mathcal{N}}_{\kappa}(\partial^\ell_n u_\varepsilon)\|_{L^p({\mathbb{R}}^{n-1})},
\end{align}
where the constant $C\in(0,\infty)$ does not depend on $u$ or $\varepsilon$.
In turn, from \eqref{eq:iFD.uyt.ad.YY}, \eqref{eq:jufds.22Tuj},
and the third condition in \eqref{KJHab.ttr}, we obtain
that, for the same constant $C$ as above,
\begin{align}\label{eq:iFD.uyt.ad}
\|{\mathcal{N}}_{\kappa}(\nabla^\ell u_\varepsilon)\|_{L^p({\mathbb{R}}^{n-1})}
\leq C\|{\mathcal{N}}_{\kappa}(\partial^\ell_n u)\|_{L^p({\mathbb{R}}^{n-1})}<\infty,
\qquad\forall\,\ell\in\{0,1,\dots,k\}.
\end{align}
With this in hand and arguing as in \eqref{exTGF-DFF}-\eqref{eq:iFD.556} we then conclude
that
\begin{equation}\label{eq:jufds.26.TRnb}
{\mathcal{N}}_\kappa(\nabla^\ell u)\in L^p({\mathbb{R}}^{n-1})\,\,\,
\mbox{for each }\,\,\ell\in\{0,1,\dots,k\}.
\end{equation}
As a consequence of \eqref{eq:jufds.26.TRnb} and
Theorem~\ref{tuFatou} we have
\begin{equation}\label{eufds.26fff.ii}
(\nabla^\ell u)\big|^{{}^{\rm n.t.}}_{\partial{\mathbb{R}}^n_{+}}
\,\,\mbox{ exists and belongs to }\,\,L^p({\mathbb{R}}^{n-1}),
\,\,\mbox{for each }\,\,\ell\in\{0,1,\dots,k\}.
\end{equation}
Next, an inspection of the manner in which \eqref{eq:IIIb} has been proved reveals that
\begin{equation}\label{eq:IIIb.yfr}
\begin{array}{c}
\partial_j\Big(v\big|^{{}^{\rm n.t.}}_{\partial{\mathbb{R}}^n_{+}}\Big)
=(\partial_jv)\big|^{{}^{\rm n.t.}}_{\partial{\mathbb{R}}^n_{+}}
\,\,\mbox{ in }\,\,{\mathcal{D}}'({\mathbb{R}}^{n-1}),\,\,\,\mbox{ for each }\,\,j\in\{1,\dots,n-1\},
\\[6pt]
\mbox{whenever the function $v\in{\mathscr{C}}^1({\mathbb{R}}^n_{+})$ satisfies}
\\[4pt]
{\mathcal{N}}_\kappa(\nabla^\ell v)\in L^1_{\rm loc}({\mathbb{R}}^{n-1})
\,\,\mbox{ and }\,\,(\nabla^\ell v)\big|^{{}^{\rm n.t.}}_{\partial{\mathbb{R}}^n_{+}}
\,\,\mbox{ exists, }\,\,\mbox{ for }\,\,\ell\in\{0,1\}.
\end{array}
\end{equation}
Keeping this in mind, as well as \eqref{eq:jufds.26.TRnb}-\eqref{eufds.26fff.ii} and
the last condition in \eqref{KJHab.ttr},
we may conclude that, in the sense of distributions in ${\mathbb{R}}^{n-1}$,
\begin{equation}\label{eq:IIIbcc}
\partial^{\alpha'}\!f=(\partial^{\alpha'}u)\big|^{{}^{\rm n.t.}}_{\partial{\mathbb{R}}^n_{+}}
\in L^p({\mathbb{R}}^{n-1}),\,\,\,\mbox{ for each }\,\,\alpha'\in{\mathbb{N}}_0^{n-1}
\,\,\mbox{ with }\,\,|\alpha'|\leq k.
\end{equation}
Hence $f\in L^p_k({\mathbb{R}}^{n-1})$ which, together with
\eqref{eq:jufds.26.TRnb} and \eqref{KJHab.ttr}, shows that $u$ is a solution of
the higher order Regularity problem $(R^k_p)$ with boundary datum $f$.
This establishes the left-to-right inclusion in \eqref{eq:tfc.1-new.HOR}.

As regards the right-to-left inclusion in \eqref{eq:tfc.1-new.HOR}, we propose to prove
by induction on $k\in{\mathbb{N}}$ that
\begin{equation}\label{eq:tfc.1-new.RRRaaa}
D({\mathbf{A}}^k)\supseteq\big\{f\in L^p_k({\mathbb{R}}^{n-1}):\,
\mbox{$(R^k_p)$ with boundary datum $f$ is solvable}\big\}
\end{equation}
and
\begin{equation}\label{mJBVV.RRR}
\parbox{11.50cm}
{for each $f\in L^p_k({\mathbb{R}}^{n-1})$ with the property
that $(R^k_p)$ with boundary datum $f$ is solvable we have
${\mathbf{A}}^k f=(\partial^k_n u)\big|^{{}^{\rm n.t.}}_{\partial{\mathbb{R}}^n_{+}}$
where $u$ is the (unique) solution of $(R^k_p)$ with boundary datum $f$.}
\end{equation}
Of course, this also yields \eqref{eq:hJB.HOR} (bearing in mind Theorem~\ref{tuFatou}).

To this end, note that the case $k=1$ of \eqref{eq:tfc.1-new.RRRaaa}-\eqref{mJBVV.RRR}
is contained in Theorem~\ref{V-Naa.11}. Assume next that \eqref{eq:tfc.1-new.RRRaaa}-\eqref{mJBVV.RRR}
hold for some $k\in{\mathbb{N}}$, with the goal of establishing analogous results
with $k$ replaced by $k+1$. Fix $f\in L^p_{k+1}({\mathbb{R}}^{n-1})$ with
the property that $(R^{k+1}_p)$ with boundary datum $f$ has a solution $u$.
Since $u$ also solves $(R^{k}_p)$ with boundary datum $f$, the induction hypothesis
implies that
\begin{equation}\label{eq:pyv6}
f\in D({\mathbf{A}}^k)\,\,\mbox{ and }\,\,
{\mathbf{A}}^k f=(\partial^k_n u)\big|^{{}^{\rm n.t.}}_{\partial{\mathbb{R}}^n_{+}}.
\end{equation}
As such, there remains to prove that
\begin{equation}\label{eq:pyv7}
{\mathbf{A}}^k f\in D({\mathbf{A}})
\,\,\mbox{ and }\,\,
{\mathbf{A}}\big({\mathbf{A}}^k f\big)
=(\partial^{k+1}_n u)\big|^{{}^{\rm n.t.}}_{\partial{\mathbb{R}}^n_{+}}.
\end{equation}
Since, by design, ${\mathcal{N}}_{\kappa}(\nabla^{\ell} u)\in L^p({\mathbb{R}}^{n-1})$ and
$L(\nabla^{\ell} u)=0$ in ${\mathbb{R}}^n_{+}$ for each $\ell\in\{0,1,\dots,k+1\}$,
the Fatou result recorded in Theorem~\ref{tuFatou} gives that
$(\nabla^{\ell} u)\big|^{{}^{\rm n.t.}}_{\partial{\mathbb{R}}^n_{+}}$ exists
a.e.~in ${\mathbb{R}}^{n-1}$ and belongs to $L^p(\mathbb{R}^{n-1})$
for each $\ell\in\{0,1,\dots,k+1\}$.
In concert with the observation of a general nature made in \eqref{eq:IIIb.yfr},
this permits us to conclude that
\begin{equation}\label{eq:i6f5}
(\partial^k_n u)\big|^{{}^{\rm n.t.}}_{\partial{\mathbb{R}}^n_{+}}
\in L^p_1({\mathbb{R}}^{n-1}).
\end{equation}
Moreover, the function $v(x',t):=(\partial^k_n u)(x',t)$ for $(x',t)\in{\mathbb{R}}^n_{+}$
solves $(R_p)$ with boundary datum
$(\partial^k_n u)\big|^{{}^{\rm n.t.}}_{\partial{\mathbb{R}}^n_{+}}$.
From this, \eqref{eq:i6f5}, \eqref{eq:pyv6}, as well as \eqref{eq:tfc.1-new} and \eqref{mJBVV}
in Theorem~\ref{V-Naa.11}, we then deduce that ${\mathbf{A}}^k f\in D({\mathbf{A}})$ and
\begin{equation}\label{eq:pyv8}
{\mathbf{A}}\big({\mathbf{A}}^k f\big)
=(\partial_n v)\big|^{{}^{\rm n.t.}}_{\partial{\mathbb{R}}^n_{+}}
=(\partial^{k+1}_n u)\big|^{{}^{\rm n.t.}}_{\partial{\mathbb{R}}^n_{+}}.
\end{equation}
This yields \eqref{eq:pyv7} and completes the proof of \eqref{eq:tfc.1-new.RRRaaa}-\eqref{mJBVV.RRR}.
Next, the density result in \eqref{eq:tfc.1-new.RRR-LAC} is justified in a very similar manner
to the proof of \eqref{eq:tfc.1-new.RRR}, with natural alterations (taking into account what we
have proved so far). At this stage, there remains to observe that the very last claim in the
statement of the theorem is a consequence of \eqref{eq:tfc.2BB.HOR} and Theorem~\ref{them:Dir-l}
(cf. also the comment following its statement).
\end{proof}

We conclude with a comment designed to shed light on the remarkable formula \eqref{mdiab-Lap:LL}
in a more general setting, which we first describe. Concretely, assume that
$L$ is a strongly elliptic $M\times M$ system, with constant complex coefficients,
which has the `block' structure
\begin{equation}\label{eq:Lnay}
L=I_{M\times M}\partial_n^2+L'
\end{equation}
where $I_{M\times M}$ is the $M\times M$ identity matrix and $L'$ has the form
\begin{equation}\label{eq:Lnay2}
L'=\sum_{r,s=1}^{n-1}B_{rs}\partial_r\partial_s,
\end{equation}
for some complex matrices $B_{rs}=\bigl(b_{rs}^{\,\alpha\beta}\bigr)_{1\leq\alpha,\beta\leq M}$ with
$r,s\in\{1,\dots,n-1\}$ (each of which turns out to be strongly elliptic).
As in the past, fix some $p\in(1,\infty)$ and consider the infinitesimal
generator ${\mathbf{A}}$ of the Poisson semigroup associated with $L$ as in Theorem~\ref{VCXga}.
In this context, we claim that (compare with \eqref{mdiab-Lap:LL} in the case when
$L=\Delta=\partial^2_n+\Delta_{n-1}$)
\begin{equation}\label{eq:thb}
{\mathbf{A}}=-\sqrt{-L'},\quad\mbox{ in the sense that }\,\,{\mathbf{A}}^2f=-L'f\,\,
\mbox{ for each }\,\,f\in D({\mathbf{A}}^2).
\end{equation}
In this vein, we wish to note that since by \eqref{eq:tfc.1-new.HOR} we have
$D({\mathbf{A}}^2)\subseteq L^p_2({\mathbb{R}}^{n-1})$,
the action of $L'$ on $f$ is meaningful and $L'f\in L^p({\mathbb{R}}^{n-1})$.

Concerning the proof of \eqref{eq:thb}, start by fixing some $f\in D({\mathbf{A}}^2)$
then define $u$ as in \eqref{eq:FDww.YY}. From \eqref{eq:tfc.1-new.HOR} (with $k=2$) and
Theorem~\ref{Theorem-NiceDP} it follows that
\begin{equation}\label{eq:iyt8742}
\mbox{$u$ solves $(R^2_p)$ with boundary datum $f$}.
\end{equation}
We then have
\begin{align}\label{kgrtrutv5}
{\mathbf{A}}^2f =(\partial_n^2 u)\big|^{{}^{\rm n.t.}}_{\partial{\mathbb{R}}^n_{+}}
=(-L'u)\big|^{{}^{\rm n.t.}}_{\partial{\mathbb{R}}^n_{+}}
=-L'\Big(u\big|^{{}^{\rm n.t.}}_{\partial{\mathbb{R}}^n_{+}}\Big)=-L'f.
\end{align}
The first equality comes from \eqref{eq:iyt8742} and \eqref{eq:hJB.HOR} (with $k=2$).
The second equality employs \eqref{eq:Lnay} and the fact that $Lu=0$ in ${\mathbb{R}}^n_{+}$.
The third equality uses \eqref{eq:Lnay2}, \eqref{eq:IIIb.yfr}, and \eqref{eq:iyt8742}.
Finally, the last equality is implicit in \eqref{eq:iyt8742}. This finishes the proof of \eqref{eq:thb}.

\section{The infinitesimal generator for higher-order operators in Lipschitz domains}
\setcounter{equation}{0}
\label{S-6}

Here the main goal is to prove Theorem~\ref{yenbcxgu}, dealing with the nature of the infinitesimal
generator of the semigroup naturally associated with higher-order operators in graph Lipschitz domains.
This requires a few preliminaries and we begin by considering a graph Lipschitz domain in ${\mathbb{R}}^n$,
i.e., a region of the form
\begin{equation}\label{eq:p8h}
\Omega:=\big\{x=(x',x_n)\in{\mathbb{R}}^{n-1}\times{\mathbb{R}}:\,x_n>\varphi(x')\big\},
\end{equation}
where $\varphi:{\mathbb{R}}^{n-1}\to{\mathbb{R}}$ is a Lipschitz function.
The Lebesgue scale $L^p(\partial\Omega)$, $0<p\leq\infty$, is considered with respect to
the surface measure $\sigma$  on $\partial\Omega$. We shall also need $L^p$-based Sobolev
spaces of order one on $\partial\Omega$, namely
\begin{equation}\label{eq:}
L^p_1(\partial\Omega):=\big\{f\in L^p(\partial\Omega):\,
f(\cdot,\varphi(\cdot))\in L^p_1({\mathbb{R}}^{n-1})\big\},
\end{equation}
for each $p\in(1,\infty)$. A very useful alternative description of these Sobolev spaces is
as follows. Let $\nu=(\nu_1,\dots,\nu_n)$ be the outward unit normal to $\Omega$ and
consider the first-order tangential derivative operators
$\partial_{\tau_{jk}}$ acting on a compactly supported function
$\psi$ of class ${\mathscr{C}}^1$ in a neighborhood of $\partial\Omega$ by
\begin{eqnarray}\label{def-TAU}
\partial_{\tau_{jk}}\psi:=\nu_j(\partial_k\psi)\Bigl|_{\partial\Omega}
-\nu_k(\partial_j\psi)\Bigl|_{\partial\Omega},\qquad j,k=1,\dots,n.
\end{eqnarray}
Then $L^p_1(\partial\Omega)$ may be viewed as the collection of functions $f\in L^p(\partial\Omega)$
such that there exists a constant $C\in(0,\infty)$ with the property that for each $j,k\in\{1,\dots,n\}$
\begin{equation}\label{A.8yt6}
\Bigg|\int_{\partial\Omega}(\partial_{\tau_{jk}}\psi)f\,d\sigma\Bigg|
\leq C\big\|\psi|_{\partial\Omega}\big\|_{L^{p'}(\partial\Omega)},
\qquad\forall\,\psi\in{\mathscr{C}}_0^1({\mathbb{R}}^n),
\end{equation}
where $p'\in(1,\infty)$ is the H\"older conjugate exponent of $p$. On account
of Riesz's representation theorem it follows that $f\in L^p_1(\partial\Omega)$ if and
only if $f\in L^p(\partial\Omega)$ and for each $j,k\in\{1,\dots,n\}$ there exists
$f_{jk}\in L^p(\partial\Omega)$ satisfying
\begin{equation}\label{A.8}
\int_{\partial\Omega}(\partial_{\tau_{jk}}\psi)f\,d\sigma
=-\int_{\partial\Omega}\psi f_{jk}\,d\sigma,
\qquad\forall\,\psi\in{\mathscr{C}}_0^1({\mathbb{R}}^n).
\end{equation}
In such a case, we write $\partial_{\tau_{jk}}f:=f_{jk}$, so
\begin{eqnarray}\label{Lp1-2}
L^p_{1}(\partial\Omega)=\Big\{f\in L^p(\partial\Omega):\,
\partial_{\tau_{jk}}f\in L^p(\partial\Omega),\,\,\,\,j,k=1,\dots,n\Big\}.
\end{eqnarray}
This becomes a Banach space when equipped with the natural norm
\begin{eqnarray}\label{Lp1-3}
\|f\|_{L^p_{1}(\partial\Omega)}:=\|f\|_{L^p(\partial\Omega)}
+\sum_{j,k=1}^{n}\|\partial_{\tau_{jk}}f\|_{L^p(\partial\Omega)}.
\end{eqnarray}
See \cite{IMM} for more on this topic.

Moving on, we continue to assume that $\Omega$ is as in \eqref{eq:p8h}, and
fix $\kappa=\kappa(\Omega)>0$ sufficiently large. In this context, given
$u:\Omega\to{\mathbb{C}}$ for each $x=(x',\varphi(x'))\in\partial\Omega$ define
the nontangential trace, nontangential maximal function, and the area function
of $u$, respectively, by
\begin{align}\label{eq:WW7h65r.A}
\Big(u\big|^{{}^{\rm n.t.}}_{\partial\Omega}\Big)(x) &:=
\lim_{\varphi(x'){\mathbf{e}}_n+\Gamma_\kappa(x')\ni y\to x}u(y),
\\[12pt]
\big({\mathcal{N}}_\kappa u\big)(x) & :=\sup_{y\in\varphi(x'){\mathbf{e}}_n+\Gamma_\kappa(x')}|u(y)|,
\label{eq:WW7h65r.B}
\\[12pt]
\big({\mathcal{A}}_\kappa u\big)(x) & :=\Bigg(\int_{\varphi(x'){\mathbf{e}}_n+\Gamma_\kappa(x')}
\frac{|u(y',t)|^2}{(t-\varphi(x'))^{n-2}}\,dy'\,dt\Bigg)^{1/2}.
\label{eq:WW7h65r.C}
\end{align}
Note that \eqref{eq:WW7h65r.A}-\eqref{eq:WW7h65r.C} reduce to earlier definitions
in the case when $\Omega={\mathbb{R}}^n_{+}$ (i.e., for $\varphi=0$).
As in the past, we shall suppress the dependence on the parameter $\kappa>0$ whenever irrelevant.

Going further, fix $n,m\in{\mathbb{N}}$ with $n\geq 2$, $p\in(1,\infty)$ and denote by
$\gamma$ multi-indices in ${\mathbb{N}}_0^n$. For a family
$\dot{f}=\{f_\gamma\}_{|\gamma|\leq m-1}$ of functions in $L^p(\partial\Omega)$ consider
the compatibility condition
\begin{equation}\label{eq:hdie}
\partial_{\tau_{jk}}f_\gamma=\nu_jf_{\gamma+{\mathbf{e}}_k}-\nu_kf_{\gamma+{\mathbf{e}}_j},
\quad\forall\,\gamma\in{\mathbb{N}}_0^n,\,\,|\gamma|\leq m-2,\,\,\,
\forall\,j,k\in\{1,\dots,n\}
\end{equation}
(a condition assumed to be vacuous if $m=1$).
We then define the space of $L^p$-Whitney arrays as
\begin{equation}\label{eq:jdi}
\dot{L}^p_{m-1}(\partial\Omega):=\Big\{
\dot{f}=\big\{f_\gamma\big\}_{|\gamma|\leq m-1}:\,
f_\gamma\in L^p(\partial\Omega)\mbox{ for }|\gamma|\leq m-1
\mbox{ and satisfy \eqref{eq:hdie}}\Big\},
\end{equation}
and equip this space with the natural norm
\begin{equation}\label{eq:NNau8}
\|\dot{f}\|_{\dot{L}^p_{m-1}(\partial\Omega)}:=\sum_{|\gamma|\leq m-1}\|f_\gamma\|_{L^p(\partial\Omega)},
\qquad\forall\,\dot{f}=\big\{f_\gamma\big\}_{|\gamma|\leq m-1}\in \dot{L}^p_{m-1}(\partial\Omega).
\end{equation}

Fix some $M\in\mathbb{N}$ and consider an $M\times M$
system ${\mathfrak{L}}$ of homogeneous differential operators of order $2m$ as in \eqref{op-Liii}.
Also, suppose $\Omega$ is a graph Lipschitz domain in ${\mathbb{R}}^n$. Then the Dirichlet problem
for the system ${\mathfrak{L}}$ corresponding to the boundary datum
$\dot{f}=\big\{f_\gamma\big\}_{|\gamma|\leq m-1}\in
\dot{L}^p_{m-1}(\partial\Omega)$, where $p\in(1,\infty)$, is formulated as
\begin{equation}\label{Dir-BVP-higher}
(D_{m,p})\,\,
\left\{
\begin{array}{l}
u\in{\mathscr{C}}^\infty(\Omega),
\\[4pt]
{\mathfrak{L}}u=0\,\,\mbox{ in }\,\,\Omega,
\\[4pt]
\mathcal{N}(\nabla^\ell u)\in L^p(\partial\Omega),
\quad\ell\in\{0,1,\dots,m-1\},
\\[6pt]
(\partial^\gamma u)\big|_{\partial\Omega}^{{}^{\rm n.t.}}=f_\gamma\in L^p(\partial\Omega),
\quad |\gamma|\leq m-1.
\end{array}
\right.
\end{equation}
Let us also define the space of $L^p_1$-Whitney arrays, for $p\in(1,\infty)$, by setting
\begin{equation}\label{eq:jdi2}
\dot{L}^p_{1,m-1}(\partial\Omega):=\Big\{
\dot{f}=\big\{f_\gamma\big\}_{|\gamma|\leq m-1}\in
\dot{L}^p_{m-1}(\partial\Omega):\,
f_\gamma\in L^p_1(\partial\Omega)\mbox{ for all }\gamma\Big\},
\end{equation}
and endow this space with the natural norm
\begin{equation}\label{eq:NNau9}
\|\dot{f}\|_{\dot{L}^p_{1,m-1}(\partial\Omega)}:=\sum_{|\gamma|\leq m-1}\|f_\gamma\|_{L^p_1(\partial\Omega)},
\qquad\forall\,\dot{f}=\big\{f_\gamma\big\}_{|\gamma|\leq m-1}\in \dot{L}^p_{1,m-1}(\partial\Omega).
\end{equation}

Regarding the Regularity problem for the system ${\mathfrak{L}}$ as in \eqref{op-Liii}, given the
boundary datum $\dot{f}=\big\{f_\gamma\big\}_{|\gamma|\leq m-1}\in
\dot{L}^p_{1,m-1}(\partial\Omega)$, its formulation is
\begin{equation}\label{Reg-BVP-higher}
(R_{m,p})\,\,
\left\{
\begin{array}{l}
u\in{\mathscr{C}}^\infty(\Omega),
\\[4pt]
{\mathfrak{L}}u=0\,\,\mbox{ in }\,\,\Omega,
\\[4pt]
\mathcal{N}(\nabla^\ell u)\in L^p(\partial\Omega),
\quad\ell\in\{0,1,\dots,m\},
\\[6pt]
(\partial^\gamma u)\big|_{\partial\Omega}^{{}^{\rm n.t.}}=f_\gamma\in L^p_1(\partial\Omega),
\quad |\gamma|\leq m-1.
\end{array}
\right.
\end{equation}

The stage has been set to formulate the main result in this section.

\begin{theorem}\label{yenbcxgu}
Fix $n,m,M\in\mathbb{N}$ with $n\geq 2$, and consider an $M\times M$
system of homogeneous differential operators of order $2m$,
\begin{equation}\label{op-Liii-4}
{\mathfrak{L}}:=\sum_{|\alpha|=2m}{A}_{\alpha}\partial^\alpha,
\end{equation}
with constant complex matrix coefficients ${A}_{\alpha}\in{\mathbb{C}}^{M\times M}$,
for $\alpha\in{\mathbb{N}}_0^n$ with $|\alpha|=2m$.
Assume that ${\mathfrak{L}}$ satisfies the weak ellipticity condition
\begin{equation}\label{LH.w-4}
{\rm det}\Bigg(\sum_{|\alpha|=2m}\xi^\alpha{A}_{\alpha}\Bigg)\not=0,
\qquad\forall\,\xi\in{\mathbb{R}}^n\setminus\{0\}.
\end{equation}
Fix $p\in(1,\infty)$, $\kappa>0$, and assume that $\Omega$ is a graph Lipschitz domain
in ${\mathbb{R}}^n$ for which the following properties hold:
\begin{enumerate}
\item[(i)] The Dirichlet problem $(D_{m,p})$ is well-posed in $\Omega$.
\item[(ii)] If $v\in{\mathscr{C}}^\infty(\Omega)$ is a function satisfying ${\mathfrak{L}}v=0$ in $\Omega$
and ${\mathcal{N}}_\kappa(\nabla^\ell v)\in L^p(\partial\Omega)$ for each $\ell\in\{0,1,\dots,m\}$ and
$(\nabla^\ell v)\big|^{{}^{\rm n.t.}}_{\partial\Omega}$ exists $\sigma$-a.e. on $\partial\Omega$
for each $\ell\in\{0,1,\dots,m-1\}$, then $(\partial^{\gamma+{\mathbf{e}}_n}v)\big|^{{}^{\rm n.t.}}_{\partial\Omega}$
also exists $\sigma$-a.e. on $\partial\Omega$ for every $\gamma\in{\mathbb{N}}_0^n$ with $|\gamma|=m-1$.
\item[(iii)] There exists $C\in(0,\infty)$ such that
if $v\in{\mathscr{C}}^\infty(\Omega)$ is a function satisfying ${\mathfrak{L}}v=0$ in $\Omega$ then
\begin{equation}\label{eq:ihgnur}
\|{\mathcal{A}}_\kappa(\nabla^{m}v)\|_{L^p(\partial\Omega)}
\leq C\|{\mathcal{N}}_\kappa(\nabla^{m-1}v)\|_{L^p(\partial\Omega)}
\end{equation}
and also
\begin{equation}\label{eq:ihgnur.2}
\|{\mathcal{N}}_\kappa(\nabla^{m-1}v)\|_{L^p(\partial\Omega)}\leq
C\|{\mathcal{A}}_\kappa(\nabla^{m}v)\|_{L^p(\partial\Omega)}
\end{equation}
under the additional assumption that
$\lim\limits_{t\to\infty}|(\nabla^\ell v)(x+t{\mathbf{e}}_n)|=0$ for each $x\in\partial\Omega$ and
each $\ell\in\{0,1,\dots,m-1\}$.
\end{enumerate}

Then for each $t>0$, the mapping
\begin{equation}\label{eq:ndusy}
T(t):\dot{L}^p_{m-1}(\partial\Omega)\longrightarrow
\dot{L}^p_{m-1}(\partial\Omega)
\end{equation}
defined by
\begin{equation}\label{eq:ndusy-2}
\big(T(t)\dot{f}\big)(x):=\Big\{(\partial^\gamma u_{\dot{f}})
(x+t{\mathbf{e}}_n)\Big\}_{|\gamma|\leq m-1},\qquad
\forall\,\dot{f}\in\dot{L}^p_{m-1}(\partial\Omega),\quad
\forall\,x\in\partial\Omega,
\end{equation}
where $u_{\dot{f}}$ solves $(D_{m,p})$ for the boundary datum $\dot{f}$,
is well-defined, linear, bounded and
\begin{equation}\label{eq:bdys}
\sup_{t>0}\|T(t)\|_{{\mathcal{L}}\big(\dot{L}^p_{m-1}(\partial\Omega)\big)}<\infty.
\end{equation}
In addition, $\big\{T(t)\big\}_{t>0}$ is a $C_0$-semigroup on the Banach space
$\dot{L}^p_{m-1}(\partial\Omega)$ whose infinitesimal generator
${\mathbf{A}}:D({\mathbf{A}})\to\dot{L}^p_{m-1}(\partial\Omega)$ is given by
\begin{equation}\label{eq:jdid}
D({\mathbf{A}})=\big\{\dot{f}\in
\dot{L}^p_{1,m-1}(\partial\Omega):\,(R_{m,p})\,\,\mbox{ is solvable for the boundary datum }
\dot{f}\big\}
\end{equation}
and
\begin{equation}\label{eq:jdid-a}
{\mathbf{A}}\dot{f}
=\Big\{\big(\partial^{\gamma+{\mathbf{e}}_n}u_{\dot{f}}\big)\Big|^{{}^{\rm n.t.}}_{\partial\Omega}
\Big\}_{|\gamma|\leq m-1},\qquad\forall\,\dot{f}\in D({\mathbf{A}}).
\end{equation}
\end{theorem}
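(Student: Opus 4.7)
The plan is to follow the template of Theorem~\ref{V-Naa.11}, with the modifications forced by higher order and by the Lipschitz geometry. First I would verify that \eqref{eq:ndusy}--\eqref{eq:ndusy-2} is a well-defined $C_0$-semigroup on $\dot{L}^p_{m-1}(\partial\Omega)$. Given $\dot{f}\in\dot{L}^p_{m-1}(\partial\Omega)$, well-posedness of $(D_{m,p})$ produces a unique $u_{\dot{f}}\in\mathscr{C}^\infty(\Omega)$ with the natural estimates; for $t>0$ the translates $x\mapsto(\partial^\gamma u_{\dot{f}})(x+t{\mathbf{e}}_n)$ are smooth near $\overline{\Omega}$, inherit the compatibility conditions \eqref{eq:hdie} from the classical identity $\partial_{\tau_{jk}}=\nu_j\partial_k-\nu_k\partial_j$ applied to $u_{\dot f}(\cdot+t{\mathbf e}_n)$, and are dominated pointwise by $\mathcal{N}_\kappa(\partial^\gamma u_{\dot{f}})$, whence \eqref{eq:bdys}. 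The semigroup law $T(s+t)=T(s)T(t)$ reduces, via uniqueness for $(D_{m,p})$, to the identity $u_{\dot{f}}(\cdot+t{\mathbf{e}}_n)|_\Omega=u_{T(t)\dot{f}}$ (both sides are null-solutions with the same Whitney boundary trace). Strong continuity at $t=0$ follows from the nontangential trace condition built into $(D_{m,p})$ together with dominated convergence against $\mathcal{N}_\kappa(\partial^\gamma u_{\dot{f}})\in L^p(\partial\Omega)$; combined with \eqref{eq:bdys} and the semigroup law, this gives strong continuity on $[0,\infty)$.

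For the easy inclusion in \eqref{eq:jdid}, suppose $\dot{f}\in\dot{L}^p_{1,m-1}(\partial\Omega)$ is such that $(R_{m,p})$ is solvable with datum $\dot{f}$ by some $u$. For each $|\gamma|\leq m-1$ I would write
\[
\frac{(T(t)\dot{f}-\dot{f})_\gamma}{t}(x)=\meanint_{\!\!\!0}^{\,t}(\partial^{\gamma+{\mathbf{e}}_n}u)(x+s{\mathbf{e}}_n)\,ds.
\]
Since $|\gamma+{\mathbf{e}}_n|\leq m$, the integrand is dominated pointwise by $\mathcal{N}_\kappa(\nabla^m u)\in L^p(\partial\Omega)$, and its limit as $s\to 0^+$ is the nontangential trace $(\partial^{\gamma+{\mathbf{e}}_n}u)\big|^{{}^{\rm n.t.}}_{\partial\Omega}$. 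Dominated convergence then gives $L^p$-convergence of the difference quotient, placing $\dot{f}$ in $D({\mathbf{A}})$ and identifying ${\mathbf{A}}\dot{f}$ as in \eqref{eq:jdid-a}; the compatibility of the resulting array inside $\dot{L}^p_{m-1}(\partial\Omega)$ is inherited from the tangential-derivative identities applied to $u$.

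The reverse inclusion is the hard direction. I would start with $\dot{f}\in D({\mathbf{A}})$, set $u:=u_{\dot{f}}$ and $\dot g:={\mathbf{A}}\dot{f}\in\dot{L}^p_{m-1}(\partial\Omega)$, and transplant the semigroup computation \eqref{eq:FDww.3}--\eqref{eq:Jfac.2b} to the Whitney-array setting to obtain the identity $(\partial^{\gamma+{\mathbf{e}}_n}u)(x+s{\mathbf{e}}_n)=(T(s)\dot{g})_\gamma(x)$ for every $s>0$, every $x\in\partial\Omega$, and every $|\gamma|\leq m-1$. By uniqueness for $(D_{m,p})$ this actually yields $\partial^{\gamma+{\mathbf{e}}_n}u=\partial^\gamma u_{\dot g}$ throughout $\Omega$, so the solvability estimates for $(D_{m,p})$ with datum $\dot g$ force $\mathcal{N}_\kappa(\partial^{\gamma+{\mathbf{e}}_n}u)\in L^p(\partial\Omega)$ for $|\gamma|\leq m-1$. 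This is control of those components of $\nabla^m u$ involving at least one derivative in the ${\mathbf{e}}_n$ direction. To upgrade to the full $\nabla^m u$, I would run the higher-order Lipschitz analog of the argument \eqref{jxfrA-2}--\eqref{eq:iFD.556}: after the standard regularization $u_\varepsilon:=u(\cdot+\varepsilon{\mathbf{e}}_n)$, pair a test function against a normal-derivative expression in $\Omega$, integrate by parts, and invoke the square-function equivalences \eqref{eq:ihgnur}--\eqref{eq:ihgnur.2} of assumption~(iii) to trade nontangential maximal for area functions and back. Letting $\varepsilon\searrow 0$ by monotone convergence delivers $\mathcal{N}_\kappa(\nabla^m u)\in L^p(\partial\Omega)$. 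Assumption~(ii) then supplies existence of the remaining nontangential traces $(\partial^{\gamma+{\mathbf{e}}_n}u)\big|^{{}^{\rm n.t.}}_{\partial\Omega}$ for $|\gamma|=m-1$, and a Lipschitz-domain divergence theorem argument (Theorem~\ref{theor:div-thm} combined with the tangential-derivative identities \eqref{def-TAU}--\eqref{A.8}) upgrades $\dot{f}$ from $\dot{L}^p_{m-1}(\partial\Omega)$ to $\dot{L}^p_{1,m-1}(\partial\Omega)$ and shows that $u$ solves $(R_{m,p})$ with datum $\dot{f}$; formula \eqref{eq:jdid-a} is then immediate. The principal obstacle is this last upgrade step, where the square-function equivalences of assumption~(iii) play the role of the $\Theta$-estimate \eqref{eq:nbdu} in the proof of Theorem~\ref{V-Naa.11}, and where the Whitney compatibility conditions and the Lipschitz geometry carry essentially all of the extra bookkeeping.
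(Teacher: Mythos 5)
Your overall architecture coincides with the paper's: the semigroup law via uniqueness for $(D_{m,p})$, the easy inclusion via the averaged difference quotient dominated by ${\mathcal{N}}_\kappa(\nabla^m u)$ together with assumption (ii) and the trace identities of type \eqref{eq:53dce}, and, in the hard direction, the identification $\partial_n u_{\dot f}=u_{\dot g}$ (which, as in \eqref{hcduT.AAA}, needs the interior-estimate/dominated-convergence justification that differentiating $t\mapsto T(t)\dot f$ componentwise in $L^p(\partial\Omega)$ is legitimate), yielding ${\mathcal{N}}_\kappa(\partial^{\gamma+{\mathbf{e}}_n}u_{\dot f})\in L^p(\partial\Omega)$ for $|\gamma|=m-1$ as in \eqref{eq:ncus}.

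There is, however, a genuine gap at the step you yourself flag as the principal obstacle: upgrading control of the purely ``normal'' top-order derivatives to the full $\nabla^m u_{\dot f}$. You propose to ``run the higher-order Lipschitz analog of \eqref{jxfrA-2}--\eqref{eq:iFD.556}'', i.e.\ a duality pairing against a test function combined with square-function estimates. But that flat-space argument hinges on the Poisson-kernel convolution representation: the operators $\Theta^j$ with kernels $\partial_j K^L$, the cancellation \eqref{est-theta-vanish}--\eqref{est-theta-vanish-ell}, and the bound \eqref{eq:nbdu} from Theorem~\ref{prop:SFE} all presuppose an explicit kernel for the solution operator, which does not exist here: in Theorem~\ref{yenbcxgu} the solvability of $(D_{m,p})$ is only an abstract hypothesis, and assumption (iii) by itself only says ${\mathcal{A}}_\kappa(\nabla^m v)\approx{\mathcal{N}}_\kappa(\nabla^{m-1}v)$ for null solutions; it gives no way to compare a tangential derivative $\partial_j$ with the normal derivative $\partial_n$. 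The missing ingredient is the cone-wise estimate of Lemma~\ref{L-Stein} (proved by writing $\partial_j u$ as an integral of $\partial_j\partial_n u$ along vertical rays, using interior estimates and Hardy's inequality), which yields ${\mathcal{A}}_\kappa\big(\nabla^m(\partial_j u_{\dot f})\big)\lesssim{\mathcal{A}}_{\kappa'}\big(\nabla^m(\partial_n u_{\dot f})\big)$ pointwise on $\partial\Omega$ under the decay \eqref{uht5f5}. Only with this bridge can assumption (iii) be applied twice, as in the chain \eqref{kdisQ},
\begin{equation*}
\big\|{\mathcal{N}}_\kappa\big(\nabla^{m-1}(\partial_j u_{\dot f})\big)\big\|_{L^p(\partial\Omega)}
\leq C\big\|{\mathcal{A}}_\kappa\big(\nabla^{m}(\partial_j u_{\dot f})\big)\big\|_{L^p(\partial\Omega)}
\leq C\big\|{\mathcal{A}}_{\kappa'}\big(\nabla^{m}(\partial_n u_{\dot f})\big)\big\|_{L^p(\partial\Omega)}
\leq C\big\|{\mathcal{N}}_{\kappa}\big(\nabla^{m-1}(\partial_n u_{\dot f})\big)\big\|_{L^p(\partial\Omega)},
\end{equation*}
to pass from the known finiteness of the right-hand side to ${\mathcal{N}}_\kappa(\nabla^m u_{\dot f})\in L^p(\partial\Omega)$. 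Without Lemma~\ref{L-Stein} (or some substitute device comparing area functions of tangential and normal derivatives), your sketch of the hard direction does not close; the rest of your outline is sound.
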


We first deal with the preliminary result below, in the spirit of \cite[Lemma~2.5.1, p.\,213]{St70}.

\begin{lemma}\label{L-Stein}
Fix $0<\kappa<\kappa'$ along with some point $x_0'\in{\mathbb{R}}^{n-1}$, and
let $u\in{\mathscr{C}}^\infty\big(\Gamma_{\kappa'}(x_0')\big)$ be a ${\mathbb{C}}^M$-valued
function. Assume that $\lim\limits_{t\rightarrow\infty}|\nabla u(x+t{\bf e}_n)|=0$
for each $x\in\Gamma_{\kappa'}(x_0')$, and that ${\mathfrak{L}}u=0$ in $\Gamma_{\kappa'}(x_0')$
for some system ${\mathfrak{L}}$ as in Theorem~\ref{yenbcxgu}. Then there exists some
finite constant $C=C_{{\mathfrak{L}},n,\kappa,\kappa'}>0$ with the property that
\begin{align}\label{q-ste1}
\int\limits_{\Gamma_\kappa(x_0')}|\nabla u(x',x_n)|^2 & \big[|x'-x_0'|+x_n\big]^{2-n}\,dx'dx_n
\nonumber\\[4pt]
&\leq C\int\limits_{\Gamma_{\kappa'}(x_0')}|\partial_n u(x',x_n)|^2\big[|x'-x_0'|+x_n\big]^{2-n}\,dx'dx_n.
\end{align}
\end{lemma}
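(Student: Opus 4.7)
The plan is to adapt Stein's classical argument from \cite[Lemma~2.5.1]{St70} to the present setting of higher-order elliptic systems. Since $\partial_n u$ already sits in the right-hand side (once I pass to a slightly larger cone), the real task is to estimate the tangential partials $\partial_j u$ with $j\in\{1,\dots,n-1\}$. After translating so that $x_0'=0$, I would fix an intermediate aperture $\kappa<\kappa_0<\kappa'$ and work with the weight $w(x',x_n):=(|x'|+x_n)^{2-n}$.

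The starting identity comes from the hypothesis that $|\nabla u(x+t{\mathbf{e}}_n)|\to 0$ as $t\to\infty$: for each $j\in\{1,\dots,n-1\}$ and $(x',x_n)\in\Gamma_\kappa(0)$,
\begin{equation*}
\partial_j u(x',x_n)=-\int_{x_n}^\infty (\partial_j\partial_n u)(x',t)\,dt.
\end{equation*}
Applying the Cauchy--Schwarz inequality with the balancing weight $\mu(t):=t^2$, so that $\int_{x_n}^\infty dt/t^2=1/x_n$, produces the pointwise bound
\begin{equation*}
|\partial_j u(x',x_n)|^2\leq \frac{1}{x_n}\int_{x_n}^\infty t^2\,|\partial_j\partial_n u(x',t)|^2\,dt.
\end{equation*}

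The next step is to multiply by $w$ and integrate over $\Gamma_\kappa(0)$, swapping the $x_n$- and $t$-integrations via Fubini. The inner integral $\int_{|x'|/\kappa}^{t}\!w(x',x_n)\,x_n^{-1}\,dx_n$ is dominated by $C(|x'|+t)^{2-n}$, since $|x'|+x_n\sim x_n$ throughout the range $x_n\geq |x'|/\kappa$, so one obtains
\begin{equation*}
\int_{\Gamma_\kappa(0)}|\partial_j u|^2 w\,dx\leq C\int_{\Gamma_\kappa(0)} t^2\,|\partial_j\partial_n u(x',t)|^2\,(|x'|+t)^{2-n}\,dx'dt.
\end{equation*}
Now $\partial_n u$ is itself a null-solution of ${\mathfrak{L}}$ (by constant-coefficient invariance), so Theorem~\ref{ker-sbav} applied on balls $B((x',t),ct)\subset\Gamma_{\kappa_0}(0)$ (with $c=c(\kappa,\kappa_0)$ small) yields
\begin{equation*}
|\partial_j\partial_n u(x',t)|^2\leq \frac{C}{t^{n+2}}\int_{B((x',t),ct)}|\partial_n u(y',s)|^2\,dy'\,ds.
\end{equation*}
Substituting and exchanging the order of integration one more time, each point $(y',s)\in\Gamma_{\kappa_0}(0)$ is covered by the family $\{B((x',t),ct)\}$ over a region where $t\sim s$ and $x'\in B(y',ct)$, of volume comparable to $s^n$; combined with the factor $t^2(|x'|+t)^{2-n}/t^{n+2}=t^{-n}(|x'|+t)^{2-n}$ and the equivalences $t\sim s$, $|x'|+t\sim |y'|+s$, this produces exactly the weight $w(y',s)$. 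Collecting this over $j\in\{1,\dots,n-1\}$ and adding the trivial bound for $|\partial_n u|^2$ restricted to $\Gamma_\kappa(0)\subset\Gamma_{\kappa'}(0)$ yields \eqref{q-ste1}.

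The main obstacle is the careful bookkeeping of the weight computations at the two Fubini swaps, especially near the lateral boundary of the cone where $x_n\to|x'|/\kappa$, and to verify that the balls $B((x',t),ct)$ used in the interior estimate actually fit inside $\Gamma_{\kappa'}(0)$; choosing the intermediate aperture $\kappa_0$ and then adjusting $c=c(\kappa,\kappa_0,\kappa')$ accordingly handles this. A secondary technical point is the absolute integrability required to legalize the manipulations above, which may be assumed without loss of generality since otherwise the right-hand side of \eqref{q-ste1} is infinite and the inequality is vacuous.
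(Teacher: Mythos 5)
Your overall scheme (the Fundamental Theorem of Calculus along vertical rays, interior estimates for null-solutions of ${\mathfrak{L}}$ to control $\partial_j\partial_n u$ by local averages of $\partial_n u$, and two Fubini swaps) is close in spirit to the paper's argument, but your first weight computation contains a genuine error that invalidates the step as written. After Cauchy--Schwarz and the first Fubini swap, the inner integral is
\begin{equation*}
\int_{|x'|/\kappa}^{t}(|x'|+x_n)^{2-n}\,x_n^{-1}\,dx_n
\approx\int_{|x'|/\kappa}^{t}x_n^{1-n}\,dx_n ,
\end{equation*}
and for $n\geq 3$ this is comparable to $|x'|^{2-n}$ once $t\gtrsim |x'|$, \emph{not} to $(|x'|+t)^{2-n}$; the two differ by the unbounded factor $(1+t/|x'|)^{n-2}$. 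For $n=2$ the integral equals $\ln(\kappa t/|x'|)$, which is not bounded by a constant. Hence your displayed inequality
$\int_{\Gamma_\kappa(0')}|\partial_j u|^2(|x'|+x_n)^{2-n}\,dx
\leq C\int_{\Gamma_\kappa(0')}t^2|\partial_j\partial_n u(x',t)|^2(|x'|+t)^{2-n}\,dx'dt$
does not follow from the computation; in fact, since $(|x'|+t)^{2-n}\approx t^{2-n}$ inside $\Gamma_\kappa(0')$, it would amount to a vertical Hardy-type inequality with weight exponent $2-n\leq -1$, which fails for general functions with the stated decay, so no Cauchy--Schwarz/Fubini bookkeeping alone can deliver it.

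The argument is repairable without changing its architecture: keep the correct bound $C_\kappa|x'|^{2-n}$ (resp.\ $C\,[1+\ln(\kappa t/|x'|)]$ when $n=2$) for the inner integral and carry it into the second Fubini swap. Every point $(y',s)$ reached there lies in a cone of aperture slightly larger than $\kappa$ (so $|y'|\lesssim s$), and since $|x'|^{2-n}$ is locally integrable in ${\mathbb{R}}^{n-1}$ one has $\int_{|x'-y'|<Ccs}|x'|^{2-n}\,dx'\lesssim s^{n-1}(|y'|+s)^{2-n}$, which combined with the factors $t^{-n}\approx s^{-n}$ and the $t$-interval of length $\approx s$ reproduces exactly the weight $(|y'|+s)^{2-n}$, and the conclusion follows after comparing apertures. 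For contrast, the paper sidesteps this difficulty by integrating along rays through the cone vertex: in polar coordinates the relevant one-dimensional weight becomes $s\,ds$ (exponent $1>-1$), so Hardy's inequality applies directly, the interior estimate being packaged beforehand through the slabs $D_t$. Either route works once the weight is tracked correctly; yours needs the correction above (and, as you rightly note, an intermediate aperture so that the interior-estimate balls stay inside $\Gamma_{\kappa'}(x_0')$).
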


\begin{proof}
Making a translation, there is no loss of generality in assuming that $x_0'=0'$.
Fix some $j\in\{1,\dots,n-1\}$ and first notice that for each fixed
$x=(x',x_n)\in\Gamma_\kappa(0')$ the Fundamental Theorem of
Calculus and the decay of $\nabla u$ at infinity allow us to write
\begin{equation}\label{NW-eq1}
\partial_j u(x)=-\int_{x_n}^\infty\partial_j\partial_n u(x',t)\,dt.
\end{equation}
Also, using interior estimates (cf. Theorem~\ref{ker-sbav}), for every $t>x_n$ we have
\begin{equation}\label{NW-eq2}
|\partial_j\partial_n u(x',t)|\leq\frac{C}{t}
\Big(\aver{B((x',t),\eta\,t)}|\partial_n u(y)|^2\,dy\Big)^{\frac12},
\end{equation}
where $\eta\in(0,1)$ is chosen to be small enough that
$B((x',t),\eta\,t) \subset\Gamma_{\kappa'}(0')$ for all $t>x_n$.
To proceed, for each $t>0$ define
\begin{eqnarray}\label{q-ste2}
D_t:=\big\{(y',y_n)\in\Gamma_{\kappa'}(0'):\,|y_n-t|<\eta t\big\}.
\end{eqnarray}
Since $B((x',t),\eta\,t)\subset D_t$ for every $t>x_n$ it follows that
\begin{equation}\label{NW-eq3}
|\partial_j\partial_n u(x',t)|\leq C\,t^{-(n+2)/2}F_t^{1/2},\quad\mbox{ for each }\,\,t>x_n,
\end{equation}
where we have set
\begin{eqnarray}\label{q-ste3}
F_t:=\int_{D_t}|\partial_n u(y)|^2\,dy,\quad\mbox{ for each }\,\,t>0.
\end{eqnarray}
Consider the spherical cap $S_{\kappa}:=S^{n-1}\cap\Gamma_\kappa(0')$
and, given a generic function $v$ defined in $\Gamma_\kappa(0')$,
for each $\theta\in S_\kappa$ define $v_\theta(s):=v(s\theta)$ where $s\in(0,\infty)$.
Then combining \eqref{NW-eq1}, \eqref{NW-eq3}, and taking into account the geometry of
$\Gamma_\kappa(0')$, we see that there exists a finite constant $c=c(\kappa)>0$ such that
\begin{equation}\label{q-ste4}
\big|(\partial_j u)_\theta(s)\big|\leq C\int_{cs}^\infty t^{-(n+2)/2}F_t^{1/2}\,dt
\end{equation}
On the one hand, this and Hardy's inequality (cf., e.g., \cite[A.4, p.\,272]{St70})
yield
\begin{equation}\label{NW-eq4}
\int_0^\infty\big|(\partial_j u)_\theta(s)\big|^2\,s\,ds\leq C\int_0^\infty t^{1-n}F_t\,dt.
\end{equation}
On the other hand, Fubini's Theorem gives
\begin{align}\label{NW-eq5}
\int_0^\infty t^{1-n} F_t\,dt &=\int_0^\infty t^{1-n}\Big(\int_{D_t}|\partial_n u(y)|^2\,dy\Big)\,dt
\nonumber\\[6pt]
&=\int_{\Gamma_{\kappa'}(0')}|\partial_n u(y)|^2\Big(\int_0^\infty t^{1-n}{\bf 1}_{D_t}(y)\,dt\Big)\,dy
\nonumber\\[6pt]
&\leq\int_{\Gamma_{\kappa'}(0')}|\partial_n u(y)|^2
\Big(\int_{y_n/(1+\eta)}^{y_n/(1-\eta)} t^{1-n}\,dt\Big)\,dy
\nonumber\\[6pt]
&\leq C\int_{\Gamma_{\kappa'}(0')}|\partial_n u(y)|^2\,y_n^{2-n}\,dy.
\end{align}
Then using \eqref{NW-eq4} and \eqref{NW-eq5} and polar coordinates, we obtain
\begin{align}\label{q-ste5}
\int_{\Gamma_\kappa(0')}|\partial_j u(x)|^2\,x_n^{2-n}\,dx
&\leq C\int_{S_\kappa}\int_0^\infty\big|(\partial_j u)_\theta(s)\big|^2\,s\,ds\,d\theta
\nonumber\\[4pt]
& \leq C\int_{\Gamma_{\kappa'}(0')}|\partial_n u(x)|^2\,x_n^{2-n}\,dx.
\end{align}
Hence,
\begin{eqnarray}\label{q-ste1rf}
\int\limits_{\Gamma_\kappa(0')}|\nabla u(x)|^2 x_n^{2-n}\,dx
\leq C_{L,n,\kappa,\kappa'}\int\limits_{\Gamma_{\kappa'}(0')}|\partial_n u(x)|^2 x_n^{2-n}\,dx,
\end{eqnarray}
and the version of \eqref{q-ste1} corresponding to $x_0'=0'$
now readily follows from this by observing that $|x'|+x_n\approx x_n$ uniformly for
$x=(x',x_n)\in\Gamma_{\kappa}(0')$ and $x=(x',x_n)\in\Gamma_{\kappa'}(0')$.
\end{proof}

After this preamble, we are ready to present the proof of Theorem~\ref{yenbcxgu}.

\vskip 0.08in
\begin{proof}[Proof of Theorem~\ref{yenbcxgu}]
The fact that the mapping in \eqref{eq:ndusy}-\eqref{eq:ndusy-2} is well-defined, linear and
bounded, as well as estimate \eqref{eq:bdys},
follow from the well-posedness of $(D_{m,p})$ and \eqref{def-TAU}.
To prove that $\big\{T(t)\big\}_{t>0}$ is a $C_0$-semigroup on $\dot{L}^p_{m-1}(\partial\Omega)$,
fix some $\dot{f}\in\dot{L}^p_{m-1}(\partial\Omega)$ and consider $t_1,t_2\in(0,\infty)$ arbitrary.
Also, set
\begin{equation}\label{eq:jdu}
\dot{g}:=\big\{(\partial^\gamma u_{\dot{f}})(\cdot+t_1{\mathbf{e}}_n)\big|_{\partial\Omega}
\big\}_{|\gamma|\leq m-1},
\end{equation}
where $u_{\dot{f}}$ is the solution to $(D_{m,p})$ for the boundary datum $\dot{f}$.
Then, by definition, $T(t_1)\dot{f}=\dot{g}$. Since $u_{\dot{f}}(\cdot+t_1{\mathbf{e}}_n)$
as a function defined in $\Omega$ is a solution to $(D_{m,p})$ with boundary datum $\dot{g}$, and
$(D_{m,p})$ is well-posed, it follows that $u_{\dot{g}}=u_{\dot{f}}(\cdot+t_1{\mathbf{e}}_n)$ in $\Omega$.
Hence,
\begin{align}\label{eq:MKG}
T(t_2)T(t_1)\dot{f}=T(t_2)\dot{g}
&=\big\{(\partial^\gamma u_{\dot{g}})(\cdot+t_2{\mathbf{e}}_n)\big|_{\partial\Omega}
\big\}_{|\gamma|\leq m-1}
\nonumber\\[4pt]
&=\big\{(\partial^\gamma u_{\dot{f}})\big((\cdot+t_2{\mathbf{e}}_n)+t_1{\mathbf{e}}_n\big)\big|_{\partial\Omega}
\big\}_{|\gamma|\leq m-1}
\nonumber\\[4pt]
&=\big\{(\partial^\gamma u_{\dot{f}})(\cdot+(t_1+t_2){\mathbf{e}}_n)\big|_{\partial\Omega}
\big\}_{|\gamma|\leq m-1}
\nonumber\\[4pt]
&=T(t_1+t_2)\dot{f},
\end{align}
as wanted.

Next, we prove \eqref{eq:jdid}. Suppose
$\dot{f}\in D({\mathbf{A}})\subseteq\dot{L}^p_{m-1}(\partial\Omega)$ and set
$\dot{g}:={\mathbf{A}}\dot{f}\in\dot{L}^p_{m-1}(\partial\Omega)$. As a preamble,
we note that by reasoning in a similar manner as in \eqref{detraz.TTT.UU}, based on the interior estimates
from Theorem~\ref{ker-sbav}, for each $v\in{\mathscr{C}}^\infty(\Omega)$
satisfying ${\mathfrak{L}}v=0$ in $\Omega$ and each $k\in{\mathbb{N}}_0$ there exists
a finite constant $C=C(\Omega,{\mathfrak{L}},\kappa,k)>0$ such that
\begin{equation}\label{deUYG}
\big|(\nabla^k v)(x+t{\mathbf{e}}_n)\big|
\leq\frac{C}{t^{k}}\,{\mathcal{M}}_{\partial\Omega}\big({\mathcal{N}}_\kappa v\big)(x),
\qquad\forall\,x\in\partial\Omega,\quad\forall\,t>0,
\end{equation}
where ${\mathcal{M}}_{\partial\Omega}$ is the Hardy-Littlewood
maximal operator on $\partial\Omega$. Next, pick an arbitrary
$\gamma\in{\mathbb{N}}_0^n$ with $|\gamma|\leq m-1$. We claim that
for each $t>0$ fixed we have
\begin{equation}\label{hcduT.AAA}
\frac{d}{dt}\Big[(\partial^{\gamma}u_{\dot{f}})(\cdot+t{\mathbf{e}}_n)\big|_{\partial\Omega}\Big]
=(\partial^{\gamma+{\mathbf{e}}_n}u_{\dot{f}})
(\cdot+t{\mathbf{e}}_n)\big|_{\partial\Omega}
\quad\mbox{ in }\,\,L^p(\partial\Omega).
\end{equation}
Indeed, using the one-dimensional Mean Value Theorem matters are readily reduced
to showing that if for each $x\in\partial\Omega$ and each $s\in{\mathbb{R}}$
some number $\xi_{x,s}$ has been chosen in between $0$ and $s$ then
\begin{equation}\label{hcduT.Abb}
\lim_{s\to 0}(\partial^{\gamma+{\mathbf{e}}_n}u_{\dot{f}})
\big(\cdot+(t+\xi_{\cdot,s}){\mathbf{e}}_n\big)\big|_{\partial\Omega}
=(\partial^{\gamma+{\mathbf{e}}_n}u_{\dot{f}})
(\cdot+t{\mathbf{e}}_n)\big|_{\partial\Omega}
\quad\mbox{ in }\,\,L^p(\partial\Omega).
\end{equation}
To justify this, note that, as seen from \eqref{deUYG} (used here with
$v:=\partial^{\gamma}u_{\dot{f}}$ and $k=1$), there exists a finite
constant $C=C(\Omega,{\mathfrak{L}},\kappa)>0$ such that whenever $\xi\in(-t/2,t/2)$ we have
\begin{equation}\label{deUYG.222}
\big|(\partial^{\gamma+{\mathbf{e}}_n}u_{\dot{f}})
\big(x+(t+\xi){\mathbf{e}}_n\big)\big|
\leq\frac{C}{t}\,{\mathcal{M}}_{\partial\Omega}\big({\mathcal{N}}_\kappa
(\partial^{\gamma}u_{\dot{f}})\big)(x),
\qquad\forall\,x\in\partial\Omega.
\end{equation}
Then \eqref{hcduT.Abb} follows
from \eqref{deUYG.222} and Lebesgue's Dominated Convergence Theorem, keeping in mind that
$(\partial^{\gamma+{\mathbf{e}}_n}u_{\dot{f}})(\cdot+t{\mathbf{e}}_n)$
is continuous on $\overline{\Omega}$, ${\mathcal{N}}_\kappa(\partial^{\gamma}u_{\dot{f}})
\in L^p(\partial\Omega)$, and that ${\mathcal{M}}_{\partial\Omega}$
is bounded on $L^p(\partial\Omega)$. This proves \eqref{hcduT.AAA}.

At this stage, from \eqref{hcduT.AAA}, \eqref{eq:ADCV.ttt},
and \eqref{eq:ndusy-2} we conclude that, for each $t>0$ fixed, the following sequence
of equalities holds in $\dot{L}^p_{m-1}(\partial\Omega)$:
\begin{align}\label{hcduT}
\Big\{(\partial^{\gamma+{\mathbf{e}}_n}u_{\dot{f}})
(\cdot+t{\mathbf{e}}_n)\big|_{\partial\Omega}\Big\}_{|\gamma|\leq m-1}
&=\Big\{\frac{d}{dt}\Big[
(\partial^{\gamma}u_{\dot{f}})(\cdot+t{\mathbf{e}}_n)\big|_{\partial\Omega}\Big]
\Big\}_{|\gamma|\leq m-1}
\nonumber\\[4pt]
&=\frac{d}{dt}\big[T(t)\dot{f}\big]=T(t){\mathbf{A}}\dot{f}=T(t)\dot{g}
\nonumber\\[4pt]
&=\Big\{(\partial^{\gamma}u_{\dot{g}})(\cdot+t{\mathbf{e}}_n)\big|_{\partial\Omega}
\Big\}_{|\gamma|\leq m-1}.
\end{align}
In turn, from \eqref{hcduT} and the well-posedness of $(D_{m,p})$ we deduce that
\begin{equation}\label{hcduT-re}
\partial_n u_{\dot{f}}=u_{\dot{g}}\,\,\,\mbox{ in }\,\,\Omega.
\end{equation}
Fix now $\gamma\in{\mathbb{N}}_0^n$ with $|\gamma|=m-1$. Since from the well-posedness
of $(D_{m,p})$ we know that ${\mathcal{N}}_\kappa(\partial^{\gamma} u_{\dot{g}})\in L^p(\partial\Omega)$,
in light of \eqref{hcduT-re} we have
\begin{equation}\label{eq:ncus}
{\mathcal{N}}_\kappa(\partial^{\gamma+{\mathbf{e}}_n} u_{\dot{f}})\in L^p(\partial\Omega).
\end{equation}
In addition, by reasoning as in \eqref{eq:jufds.23AAA} we see that derivatives of
$u_{\dot{f}}$ decay at infinity in the precise sense that for each $x\in\partial\Omega$,
each $t>0$, and each $k\in{\mathbb{N}}_0$,
\begin{align}\label{uht5f5}
\big|(\nabla^k u_{\dot{f}})(x+t{\mathbf{e}}_n)\big|\leq C\,t^{-k-(n-1)/p}\,
\|{\mathcal{N}}_\kappa u_{\dot{f}}\|_{L^p(\partial\Omega)},
\end{align}
where $C=C(\Omega,{\mathfrak{L}},n,k,\kappa,p)\in(0,\infty)$.
Then, having fixed some $\kappa'>\kappa$, for each $j\in\{1,\dots,n\}$ we may write
\begin{align}\label{kdisQ}
\big\|{\mathcal{N}}_\kappa\big(\partial^{\gamma+{\mathbf{e}}_j} u_{\dot{f}}\big)\big\|
_{L^p(\partial\Omega)}
& \leq\big\|{\mathcal{N}}_\kappa\big(\nabla^{m-1}(\partial_j u_{\dot{f}})\big)\big\|
_{L^p(\partial\Omega)}
\nonumber\\[4pt]
&\leq C\big\|{\mathcal{A}}_\kappa\big(\nabla^{m}(\partial_j u_{\dot{f}})\big)\big\|
_{L^p(\partial\Omega)}
\nonumber\\[4pt]
&\leq C\big\|{\mathcal{A}}_{\kappa'}\big(\nabla^{m}(\partial_n u_{\dot{f}})\big)\big\|
_{L^p(\partial\Omega)}
\nonumber\\[4pt]
&\leq C\big\|{\mathcal{N}}_{\kappa'}\big(\nabla^{m-1}(\partial_n u_{\dot{f}})\big)\big\|
_{L^p(\partial\Omega)}
\nonumber\\[4pt]
&\leq C\big\|{\mathcal{N}}_{\kappa}\big(\nabla^{m-1}(\partial_n u_{\dot{f}})\big)\big\|
_{L^p(\partial\Omega)}<+\infty.
\end{align}
Above, the first inequality is obvious, the second is a consequence of
\eqref{eq:ihgnur.2} in assumption {\it (iii)}
in the statement and \eqref{uht5f5}, the third follows from Lemma~\ref{L-Stein}
(used with $u:=\nabla^m u_{\dot{f}}$) and \eqref{uht5f5} (used with $k:=m+1$),
the fourth uses \eqref{eq:ihgnur} in assumption {\it (iii)}, the fifth relies on
\cite[Proposition~2.1, p.\,22]{IMM}, while the sixth is implied by \eqref{eq:ncus}.

Since $\gamma\in{\mathbb{N}}_0^n$ with $|\gamma|=m-1$ and $j\in\{1,\dots,n\}$ are arbitrary, we
conclude from \eqref{kdisQ} that
\begin{equation}\label{eq:dusFD.it43}
{\mathcal{N}}_\kappa(\nabla^m u_{\dot{f}})\in L^p(\partial\Omega).
\end{equation}
In concert with the fact that $u_{\dot{f}}$ solves $(D_{m,p})$ with datum $\dot{f}$, this further
implies that
\begin{equation}\label{eq:dusFD}
{\mathcal{N}}_\kappa(\nabla^\ell u_{\dot{f}})\in L^p(\partial\Omega),\qquad
\forall\,\ell\in\{0,1,\dots,m\}.
\end{equation}

Moving on, we claim that $\dot{f}=\{f_\gamma\}_{|\gamma|\leq m-1}\in \dot{L}^p_{m-1}(\partial\Omega)$
actually belongs to $\dot{L}^p_{1,m-1}(\partial\Omega)$. By \eqref{eq:jdi2} we need to show that
$f_\gamma\in L^p_1(\partial\Omega)$ for each $\gamma$. When $|\gamma|\leq m-2$, this is seen
directly from \eqref{Lp1-2} and \eqref{eq:hdie}-\eqref{eq:jdi}. Suppose now that
$\gamma\in{\mathbb{N}}_0^n$ satisfies $|\gamma|=m-1$ and pick an arbitrary
$\psi\in{\mathscr{C}}_0^1({\mathbb{R}}^n)$. Then, with $p'\in(1,\infty)$ denoting
the H\"older conjugate exponent of $p$, for each $j,k\in\{1,\dots,n\}$ we may write
\begin{align}\label{A.8yiuyty}
\Bigg|\int_{\partial\Omega}(\partial_{\tau_{jk}}\psi)f_\gamma\,d\sigma\Bigg|
&=\Bigg|\int_{\partial\Omega}(\partial_{\tau_{jk}}\psi)
\big(\partial^\gamma u_{\dot{f}}\big)\big|^{{}^{\rm n.t.}}_{\partial\Omega}\,d\sigma\Bigg|
\nonumber\\[4pt]
&=\Bigg|\lim_{t\to 0^{+}}\int_{\partial\Omega}(\partial_{\tau_{jk}}\psi)
\big(\partial^\gamma u_{\dot{f}}\big)(\cdot+t{\mathbf{e}}_n)\,d\sigma\Bigg|
\nonumber\\[4pt]
&=\Bigg|\lim_{t\to 0^{+}}\int_{\partial\Omega}\psi\Big(
\nu_k\big(\partial^{\gamma+{\mathbf{e}}_j} u_{\dot{f}}\big)(\cdot+t{\mathbf{e}}_n)
-\nu_j\big(\partial^{\gamma+{\mathbf{e}}_k} u_{\dot{f}}\big)(\cdot+t{\mathbf{e}}_n)\Big)\,d\sigma\Bigg|
\nonumber\\[4pt]
&\leq 2\int_{\partial\Omega}|\psi|
{\mathcal{N}}_\kappa\big(\nabla^m u_{\dot{f}}\big)\,d\sigma
\nonumber\\[4pt]
&\leq 2\big\|{\mathcal{N}}_\kappa\big(\nabla^m u_{\dot{f}}\big)\big\|_{L^p(\partial\Omega)}
\big\|\psi|_{\partial\Omega}\big\|_{L^{p'}(\partial\Omega)}.
\end{align}
In \eqref{A.8yiuyty}, the first equality follows from the definition of $u_{\dot{f}}$, while
the second equality uses Lebesgue's Dominated Convergence Theorem and \eqref{eq:dusFD}.
To justify the third equality in \eqref{A.8yiuyty}, consider the vector field
\begin{align}\label{eq:3gtg}
\vec{F} &:=\Big\{(\partial_k\psi)\big(\partial^\gamma u_{\dot{f}}\big)(\cdot+t{\mathbf{e}}_n)
+\psi\big(\partial^{\gamma+{\mathbf{e}}_k} u_{\dot{f}}\big)(\cdot+t{\mathbf{e}}_n)\Big\}{\mathbf{e}}_j
\nonumber\\[6pt]
&\quad\,\,
-\Big\{(\partial_j\psi)\big(\partial^\gamma u_{\dot{f}}\big)(\cdot+t{\mathbf{e}}_n)
+\psi\big(\partial^{\gamma+{\mathbf{e}}_j}
u_{\dot{f}}\big)(\cdot+t{\mathbf{e}}_n)\Big\}{\mathbf{e}}_k
\,\,\,\mbox{ in }\,\,\Omega,
\end{align}
which is continuous in $\Omega$, hence belongs to $L^1_{\rm loc}(\Omega)$, and satisfies
${\rm div}\,\vec{F}=0$ in $\Omega$. Also, ${\mathcal{N}}_\kappa(\vec{F})$ is a
compactly supported function in $L^p(\partial\Omega)$, hence
${\mathcal{N}}_\kappa(\vec{F})\in L^1(\partial\Omega)$. Moreover,
$\vec{F}\big|^{{}^{\rm n.t.}}_{\partial\Omega}$ exists $\sigma$-a.e.
on $\partial\Omega$ and
\begin{align}\label{eq:juytr}
\nu\cdot\Big(\vec{F}\big|^{{}^{\rm n.t.}}_{\partial\Omega}\Big)
&=(\partial_{\tau_{jk}}\psi)\big(\partial^\gamma u_{\dot{f}}\big)(\cdot+t{\mathbf{e}}_n)
\nonumber\\[4pt]
&\quad
-\psi\Big(\nu_k\big(\partial^{\gamma+{\mathbf{e}}_j} u_{\dot{f}}\big)(\cdot+t{\mathbf{e}}_n)
-\nu_j\big(\partial^{\gamma+{\mathbf{e}}_k} u_{\dot{f}}\big)(\cdot+t{\mathbf{e}}_n)\Big).
\end{align}
Then the desired equality is obtained by applying the version of the Divergence Theorem
recorded in \cite[Proposition~2.3, p.\,28]{IMM}. The last step in \eqref{A.8yiuyty}
involves H\"older's inequality. This establishes \eqref{A.8yiuyty} which, in concert with
\eqref{A.8yt6}, proves that $f_\gamma\in L^p_1(\partial\Omega)$ for each
$\gamma\in{\mathbb{N}}_0^n$ with $|\gamma|=m-1$. Hence, $\dot{f}\in\dot{L}^p_{1,m-1}(\partial\Omega)$
and we may conclude that $u_{\dot{f}}$ solves $(R_{m,p})$ for the boundary datum $\dot{f}$.
This completes the proof of the left-to-right inclusion in \eqref{eq:jdid}.

Our next goal is to prove the right-to-left inclusion in \eqref{eq:jdid} and also
establish formula \eqref{eq:jdid-a}. To this end, fix some
$\dot{f}\in\dot{L}^p_{1,m-1}(\partial\Omega)\subset\dot{L}^p_{m-1}(\partial\Omega)$ with the
property that $(R_{m,p})$ is solvable for the boundary datum $\dot{f}$ and let $u_{\dot{f}}$
be the corresponding solution. Bearing in mind the Fatou type property assumed in part {\it (ii)}
of the statement, by reasoning in a similar fashion to \eqref{eq:hds66}-\eqref{eq:hds69}
we obtain that
\begin{equation}\label{FrGd9}
\lim_{t\to 0^{+}}\frac{T(t)\dot{f}-\dot{f}}{t}
=\Big\{
\big(\partial^{\gamma+{\mathbf{e}}_n}u_{\dot{f}}\big)\Big|^{{}^{\rm n.t.}}_{\partial\Omega}
\Big\}_{|\gamma|\leq m-1}
\quad\mbox{ component-wise in }\,\,L^p(\partial\Omega).
\end{equation}
To show that actually the above limit may, in fact, be considered in the space
$\dot{L}^p_{m-1}(\partial\Omega)$ we need to ensure that
\begin{equation}\label{FrGjYmn}
\Big\{\big(\partial^{\gamma+{\mathbf{e}}_n}u_{\dot{f}}\big)\Big|^{{}^{\rm n.t.}}_{\partial\Omega}
\Big\}_{|\gamma|\leq m-1}\in\dot{L}^p_{m-1}(\partial\Omega).
\end{equation}
In turn, this comes down to verifying the compatibility conditions \eqref{eq:hdie} for the components of the
array in \eqref{FrGjYmn}. A useful result in this regard, obtained as in the justification
of the third equality in \eqref{A.8yiuyty}, (compare with \cite[Proposition~2.7, p.\,36]{IMM})
is that
\begin{equation}\label{eq:53dce}
\parbox{11.00cm}
{if $v\in{\mathscr{C}}^1(\Omega)$ has
${\mathcal{N}}v,\,{\mathcal{N}}(\nabla v)\in L^p(\partial\Omega)$
and $v\big|^{{}^{\rm n.t.}}_{\partial\Omega}$, $(\nabla v)\big|^{{}^{\rm n.t.}}_{\partial\Omega}$
exist $\sigma$-a.e. on $\partial\Omega$, then
$v\big|^{{}^{\rm n.t.}}_{\partial\Omega}\in L^p_1(\partial\Omega)$
and for each $j,k\in\{1,\dots,n\}$ there holds
$\partial_{\tau_{jk}}\big(v\big|^{{}^{\rm n.t.}}_{\partial\Omega}\big)
=\nu_j(\partial_k v)\big|^{{}^{\rm n.t.}}_{\partial\Omega}
-\nu_k(\partial_j v)\big|^{{}^{\rm n.t.}}_{\partial\Omega}$ on $\partial\Omega$.}
\end{equation}
Then \eqref{FrGjYmn} follows by applying \eqref{eq:53dce} with
$v:=\partial^{\gamma+{\mathbf{e}}_n}u_{\dot{f}}$ for $\gamma\in{\mathbb{N}}_0^n$
with $|\gamma|\leq m-2$. Having established this, we may finally conclude
from \eqref{FrGd9}-\eqref{FrGjYmn} that
$\dot{f}\in D({\mathbf{A}})$ and \eqref{eq:jdid-a} holds.
\end{proof}

We conclude by recording the following significant consequence of our previous theorem
which, in particular, is applicable to the polyharmonic operator $\Delta^m$ for each $m\in{\mathbb{N}}$
(hence also to the Laplacian, in which case we recover Dahlberg's result from \cite{Dah}),
as well as to iterations of the Lam\'e system.

\begin{corollary}\label{yenbcxgu.CCC}
Fix $n,m,M\in\mathbb{N}$ with $n\geq 2$, and consider an $M\times M$
system ${\mathfrak{L}}$ of homogeneous differential operators of order $2m$,
with constant, real, symmetric, matrix coefficients. That is,
${\mathfrak{L}}=\sum_{|\alpha|=2m}{A}_{\alpha}\partial^\alpha$ with
${A}_{\alpha}\in{\mathbb{R}}^{M\times M}$ satisfying
${A}_{\alpha}={A}_{\alpha}^\top$ for each $\alpha\in{\mathbb{N}}_0^n$ with $|\alpha|=2m$.
Assume that ${\mathfrak{L}}$ is strongly elliptic, in the sense that there exists $c>0$ so that
\begin{equation}\label{LH.wigff}
{\rm Re}\Bigg(\sum_{|\alpha|=2m}\xi^\alpha{A}_{\alpha}\eta\cdot\overline\eta\Bigg)\geq c|\xi|^{2m}|\eta|^2,
\qquad\forall\,\xi\in{\mathbb{R}}^n,\quad\forall\,\eta\in{\mathbb{C}}^M.
\end{equation}
Then for each graph Lipschitz domain $\Omega$ in ${\mathbb{R}}^n$ there exists $p_o\in[1,2)$,
depending only on $\Omega$ and ${\mathfrak{L}}$, with the following significance.

Whenever $p\in(p_o,p_o')$ where $1/p_o+1/p_o'=1$ it follows that the Dirichlet problem $(D_{m,p})$
for ${\mathfrak{L}}$ is well-posed in $\Omega$ and the mapping $T(t)$ defined for each $t>0$
as in \eqref{eq:ndusy}-\eqref{eq:ndusy-2} is well-defined, linear, bounded and satisfies
\eqref{eq:bdys}. In fact, $\big\{T(t)\big\}_{t>0}$ is a $C_0$-semigroup on the Banach space
$\dot{L}^p_{m-1}(\partial\Omega)$ whose infinitesimal generator
${\mathbf{A}}$ has domain $D({\mathbf{A}})=\dot{L}^p_{1,m-1}(\partial\Omega)$ and acts as in
\eqref{eq:jdid-a}.
\end{corollary}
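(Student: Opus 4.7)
The plan is to reduce Corollary~\ref{yenbcxgu.CCC} to an application of Theorem~\ref{yenbcxgu} by verifying its three structural hypotheses \textit{(i)}, \textit{(ii)}, \textit{(iii)} for symmetric real strongly elliptic higher-order systems in the graph Lipschitz domain $\Omega$, and then supplementing this with a solvability result for the Regularity problem $(R_{m,p})$ throughout the range $p\in(p_o,p_o')$ so as to upgrade the abstract domain description given by the theorem to the sharp identification $D({\mathbf A})=\dot{L}^p_{1,m-1}(\partial\Omega)$.

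First I would invoke the work on symmetric, real, constant-coefficient higher-order systems in Lipschitz domains (of Pipher--Verchota and, for the Regularity problem, the references in \cite{DKPV}). For such operators one has, at the base exponent $p=2$, well-posedness of both $(D_{m,2})$ and $(R_{m,2})$, along with the two-sided square-function / nontangential-maximal-function equivalence
\[
\|{\mathcal A}_\kappa(\nabla^{m}v)\|_{L^2(\partial\Omega)}
\approx
\|{\mathcal N}_\kappa(\nabla^{m-1}v)\|_{L^2(\partial\Omega)}
\]
for null-solutions decaying at infinity; the starting point for these estimates is a Rellich-type identity which requires precisely the real and symmetric character of the coefficients imposed in \eqref{LH.wigff}. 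A standard real-variable extrapolation (good-$\lambda$ interplay between ${\mathcal A}_\kappa$ and ${\mathcal N}_\kappa$, combined with interior estimates from Theorem~\ref{ker-sbav}) then propagates well-posedness of $(D_{m,p})$, $(R_{m,p})$, and both inequalities in \eqref{eq:ihgnur}--\eqref{eq:ihgnur.2} to some maximal symmetric window $(p_o,p_o')$ with $p_o\in[1,2)$ depending only on $\Omega$ and $\mathfrak{L}$. This secures hypothesis \textit{(i)} and hypothesis \textit{(iii)} of Theorem~\ref{yenbcxgu}, and simultaneously yields that $(R_{m,p})$ is solvable for every datum in $\dot{L}^p_{1,m-1}(\partial\Omega)$.

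Second, I would check the Fatou-type hypothesis \textit{(ii)}. Given $v$ as in \textit{(ii)}, the fact that $(\nabla^\ell v)\big|^{{}^{\rm n.t.}}_{\partial\Omega}$ exists $\sigma$-a.e.\ and lies in $L^p(\partial\Omega)$ for each $\ell\in\{0,\dots,m-1\}$, combined with the observation \eqref{eq:53dce} applied iteratively to $v, \partial v, \dots, \nabla^{m-1}v$, shows that the Whitney array of boundary traces of $v$ belongs to $\dot{L}^p_{1,m-1}(\partial\Omega)$. Hence by the solvability of $(R_{m,p})$ secured above and the uniqueness part of $(D_{m,p})$, the function $v$ coincides with the solution of $(R_{m,p})$ for that datum; in particular all of its $m$-th order derivatives have nontangential limits $\sigma$-a.e. on $\partial\Omega$, yielding \textit{(ii)}.

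With \textit{(i)}--\textit{(iii)} verified, Theorem~\ref{yenbcxgu} produces the $C_0$-semigroup $\{T(t)\}_{t>0}$ on $\dot{L}^p_{m-1}(\partial\Omega)$, identifies its infinitesimal generator ${\mathbf A}$ as acting via \eqref{eq:jdid-a}, and describes $D({\mathbf A})$ as those $\dot{f}\in\dot{L}^p_{1,m-1}(\partial\Omega)$ for which $(R_{m,p})$ is solvable with datum $\dot{f}$. The solvability of $(R_{m,p})$ on the \emph{entire} space $\dot{L}^p_{1,m-1}(\partial\Omega)$, established in the first step, then upgrades this to the equality $D({\mathbf A})=\dot{L}^p_{1,m-1}(\partial\Omega)$. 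The hard step is the square-function equivalence \textit{(iii)}: the reverse direction $\|{\mathcal N}_\kappa(\nabla^{m-1}v)\|_{L^p}\lesssim\|{\mathcal A}_\kappa(\nabla^m v)\|_{L^p}$ is the deep ingredient, depending crucially on the real, symmetric Rellich identity and carrying with it both well-posedness of $(D_{m,p})$ and $(R_{m,p})$ in the range $(p_o,p_o')$; every other piece of the proof is a straightforward consequence of this estimate and of the abstract machinery already furnished by Theorem~\ref{yenbcxgu}.
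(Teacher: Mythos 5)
Your overall reduction is the same as the paper's: Corollary~\ref{yenbcxgu.CCC} is proved there by feeding the Pipher--Verchota well-posedness theory \cite{PV} and the area-integral/nontangential-maximal comparability of \cite{DKPV} into Theorem~\ref{yenbcxgu}, and then the equality $D({\mathbf A})=\dot{L}^p_{1,m-1}(\partial\Omega)$ comes from solvability of $(R_{m,p})$ for all data in the window $(p_o,p_o')$. Hypotheses \textit{(i)} and \textit{(iii)} and the $(R_{m,p})$ solvability are indeed citation matters, exactly as you treat them (though calling the passage from $p=2$ to the window a ``standard good-$\lambda$ extrapolation'' understates what \cite{PV}, \cite{DKPV} actually prove; since you cite that literature anyway this is only presentational). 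The genuine gap is in your verification of hypothesis \textit{(ii)}. You propose to put the trace array of $v$ into $\dot{L}^p_{1,m-1}(\partial\Omega)$ by applying \eqref{eq:53dce} iteratively to $v,\partial v,\dots,\nabla^{m-1}v$; but applying \eqref{eq:53dce} to a component of $\nabla^{m-1}v$ requires that $(\nabla^m v)\big|^{{}^{\rm n.t.}}_{\partial\Omega}$ already exist $\sigma$-a.e., which is precisely the conclusion of \textit{(ii)} you are trying to establish, so the argument is circular at the top order. The noncircular way to get the components $f_\gamma$, $|\gamma|=m-1$, into $L^p_1(\partial\Omega)$ is the duality/divergence-theorem computation \eqref{A.8yiuyty} from the proof of Theorem~\ref{yenbcxgu}, which uses only ${\mathcal N}_\kappa(\nabla^m v)\in L^p(\partial\Omega)$ together with the traces of order at most $m-1$.

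Second, even once the datum is known to lie in $\dot{L}^p_{1,m-1}(\partial\Omega)$, identifying $v$ (via uniqueness for $(D_{m,p})$) with ``the solution of $(R_{m,p})$'' does not by itself yield the existence of nontangential limits of the $m$-th order derivatives: the formulation \eqref{Reg-BVP-higher} demands only ${\mathcal N}(\nabla^m u)\in L^p(\partial\Omega)$ and boundary traces of order at most $m-1$, so membership in the solution class carries no information about top-order boundary limits. What is needed is a genuine Fatou-type statement for null-solutions of ${\mathfrak L}$ with ${\mathcal N}_\kappa(\nabla^m v)\in L^p(\partial\Omega)$, and this is exactly what the paper imports from \cite{PV}: the solutions there come with explicit (layer-potential type) representations whose $m$-th order derivatives are singular integrals of $L^p$ densities, and the attendant jump/Fatou theory supplies the a.e.\ limits of $\partial^{\gamma+{\mathbf e}_n}v$ for $|\gamma|=m-1$. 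Without invoking such a representation (or otherwise proving a Fatou theorem), your step ``in particular all of its $m$-th order derivatives have nontangential limits'' is unsupported; with that citation added and the top-order trace regularity obtained as in \eqref{A.8yiuyty}, your argument closes and coincides with the paper's.
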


\begin{proof}
This follows from Theorem~\ref{yenbcxgu} taking into account the well-posedness
results in \cite{PV} (which also imply a suitable Fatou type theorem) and the
comparability of the nontangential maximal function and the area function
for such systems from \cite{DKPV}.
\end{proof}

\end{document}